\setlist[enumerate]{leftmargin=.5in}
\setlist[itemize]{leftmargin=.5in}
\crefname{hypothesis}{Hypothesis}{Hypotheses}
\title{Unstabilized Hybrid High-Order method for a class\\ of degenerate convex minimization problems\thanks{Submitted to the editors \today.
\funding{This work has been supported by the Deutsche Forschungsgemeinschaft (DFG) in the Priority Program 1748 Reliable simulation techniques in solid mechanics:
Development of non-standard discretization methods, mechanical and mathematical analysis under the project CA 151/22. The second author is also supported by the Berlin Mathematical School. 
The authors thank the anonymous referees for suggestions that led to \Cref{rem:MFEM}--\ref{rem:hybridization}, \ref{rem:pLaplace}, and \ref{rem:LEB}.}}}
\author{C.~Carstensen\thanks{Department of Mathematics, Humboldt-Universit\"at zu Berlin, Germany 
  (\email{cc@math-hu.berlin.de}, \email{tranngoc@math-hu.berlin.de}).}
\and N.~T.~Tran\footnotemark[2]}
\DeclareMathOperator{\D}{D}
\newcommand{\R}{\ensuremath{\mathbb{R}}}
\newcommand{\M}{\ensuremath{\mathbb{M}}}
\newcommand{\Tcal}{\ensuremath{\mathcal{T}}}
\newcommand{\Fcal}{\ensuremath{\mathcal{F}}}
\newcommand{\pw}{\ensuremath{\mathrm{pw}}}
\newcommand{\BDM}{\ensuremath{\mathrm{BDM}}}
\newcommand{\RT}{\ensuremath{\mathrm{RT}}}
\newcommand{\osc}{\ensuremath{\mathrm{osc}}}
\renewcommand{\d}[1]{\ensuremath{\,\mathrm{d}#1}}
\renewcommand{\div}{\ensuremath{\operatorname{div}}}
\newcommand{\I}{\operatorname{I}}
\newcounter{cntS}
\newcommand{\newcnstS}{%
	\refstepcounter{cntS}%
	\ensuremath{c_{\thecntS}}}
\newcommand{\cnstS}[1]{\ensuremath{c_{\ref{#1}}}}
\newcounter{cntL}
\newcommand{\newcnstL}{%
	\refstepcounter{cntL}%
	\ensuremath{C_{\thecntL}}}
\newcommand{\cnstL}[1]{\ensuremath{C_{\ref{#1}}}}
\begin{document}
\maketitle

% REQUIRED
\begin{abstract}
The relaxation in the calculus of variation motivates the numerical analysis of a class of degenerate convex minimization 
problems with non-strictly convex energy densities with some convexity control and two-sided $p$-growth. % for $1 < p,p' < \infty$ with $1/p + 1/p' = 1$.
The minimizers may be non-unique in the primal variable but lead to a unique stress $\sigma \in H(\div,\Omega;\M)$.  Examples include the p-Laplacian, an optimal design problem in topology optimization, and the convexified double-well problem. 
The approximation by hybrid high-order methods (HHO) utilizes a reconstruction of the gradients with piecewise Raviart-Thomas or BDM finite elements without stabilization on a regular triangulation into simplices. 
The application of this HHO method to the class of degenerate convex minimization problems allows for a unique $H(\div)$ 
conforming stress approximation $\sigma_h$. The main results are a~priori 
and a posteriori error estimates for the stress error  $\sigma-\sigma_h$ in Lebesgue norms and a computable lower energy bound. Numerical
benchmarks display higher convergence rates for higher polynomial degrees and include adaptive mesh-refining with the first 
superlinear convergence rates of guaranteed lower energy bounds.
\end{abstract}

% REQUIRED
\begin{keywords}
	convex minimization, degenerate convex, convexity control, p-Laplacian, optimal design problem, double-well problem, hybrid high order methods, error estimates, a~priori, a posteriori, adaptive mesh-refining
\end{keywords}

% REQUIRED
\begin{AMS}
	65N12, 65N30, 65Y20
\end{AMS}

\section{Introduction}
This paper analyses the methodology \cite{DiPietroErn2015,DiPietroErnLemaire2014} for a class of degenerate convex minimization problems defined in \Cref{sec:energyDensity} with examples in \Cref{sec:examples}. The main results follow in \Cref{subsec:main}.
\subsection{A class of degenerate minimization problems and its finite element approximation}
The relaxation procedure in the calculus of variations \cite{Dacorogna2008} applies to minimization problems with non-convex energies and enforced microstructures \cite{BallJames1987} and provides an upscaling to a macroscopic model with a quasi-convexified energy density. In some model problems in nonlinear elasticity, multi-well problems, and topology optimization, the resulting energy density $W \in C^1(\M)$ with $\M = \R^{m \times n}$ is degenerate convex with a convexity control plus a two-sided growth of order $1< p < \infty$. Given a right-hand side $f \in L^{p'}(\Omega;\R^m)$ for $1/p+1/p'=1$ in a bounded polyhedral Lipschitz domain $\Omega \subset \R^n$, the minimal energy
\begin{align}
E(v) = \int_{\Omega} W(\D v) \d{x} - \int_{\Omega} f\cdot v \d{x} \quad\text{amongst } v\in V \coloneqq W^{1,p}_0(\Omega;\R^m) \label{intro:energy}
\end{align}
is attained, but the convex set of minimizers is not a singleton in general.
Nevertheless, the convexity control leads to a unique stress $\sigma = \D W(\D u) \in W^{1,p'}_{\mathrm{loc}}(\Omega;\M)$. A~priori and a~posteriori error estimates for the stress approximation are derived in \cite{CPlechac1997} for the lowest-order conforming scheme, followed by an adaptive scheme with plain convergence in \cite{BartelsC2008,CDolzmann2015}.
% The split of the energy density into a convex part and a null-Lagrangian extends this convergence result to the non-convex relaxed two-well problem in \cite{CDolzmann2015}.

The local stress regularity in \cite{CMueller02} motivated the mixed finite element approximation in \cite{CGuentherRabus2012}.
The better approximation of the stress variable through
Raviart-Thomas FEM on the one hand meets the non-smoothness of the dual functional $W^*$ on the other. The presence of a microstructure zone,
where the solution $u$ has a non-trivial gradient that generates a Young measure in the non-convex original problem \cite{BallKirchheimKristensen2000}, causes the so-called reliability-efficiency gap \cite{CJochimsen2003}: Efficient error estimates are not reliable and reliable error estimates are not efficient.
A one-point quadrature rule in the dual mixed
Raviart-Thomas formulation leads to the discrete Raviart-Thomas FEM in \cite{CLiu2015}, which is equivalent to a Crouzeix-Raviart FEM without a discrete duality gap. This allows the first guaranteed energy bounds
and an optimal a~posteriori error estimate, which overcomes the reliability efficiency gap in numerical examples for the optimal design problem. The lower energy bound (LEB) in \cite{CLiu2015} is restricted to the lowest-order discretization and higher-order schemes are not addressed in the literature.
Recent skeletal methods have been established in nonlinear problems \cite{DiPietroDroniou2017,AbbasErnPignet2018} with convergence rates in \cite{DiPietroDroniou2017-II,DiPietroDroniouManzini2018} for Leray–Lions problems and lead to lower eigenvalue bounds in \cite{CarstensenZhaiZhang19}.
%Benchmarks for three applications with parameters displayed in \Cref{tab:introExamples} test the performance of the new numerical scheme.
\subsection{A class of degenerate convex energy densities}\label{sec:energyDensity}
Suppose that the energy density $W \in C^1(\M)$ with $\M = \R^{m \times n}$ for $m,n \in \mathbb{N}$ satisfies the two-sided growth \eqref{ineq:growthW} and the convexity control \eqref{ineq:cc} with parameters $1 < p,p',r < \infty$, $0 \leq s < \infty$, $1/p + 1/p' = 1$:
There exist positive constants $\newcnstS\label{cnst:growthWLeft1}, \newcnstS\label{cnst:growthWRight1}, \newcnstS\label{cnst:cc}$ and non-negative constants $\newcnstS\label{cnst:growthWLeft2}, \newcnstS\label{cnst:growthWRight2}$ such that, for any $A,B \in \M$,
\begin{align}
\cnstS{cnst:growthWLeft1}|A|^p - \cnstS{cnst:growthWLeft2} &\leq W(A) \leq \cnstS{cnst:growthWRight1}|A|^p + \cnstS{cnst:growthWRight2}, \label{ineq:growthW}\\
\begin{split}
	|\D W(A) - \D W(B)|^r &\leq \cnstS{cnst:cc}(1 + |A|^s + |B|^s)\\
	&\quad\times(W(B) - W(A) - \D W(A):(B-A)). \label{ineq:cc}
\end{split}
\end{align}
\subsection{Examples}\label{sec:examples}
The following scalar examples with $m = 1$ displayed in \Cref{tab:introExamples} will be revisited in computational benchmarks in \Cref{sec:numericalExamples}. Further examples are found in \cite{CMueller02,Knees2008} and include Hencky elastoplasticity with hardening, vectorial two-well problems, and a special case of the Ericksen–James energy.
\begin{table}[H]
	\centering
	\begin{tabular}{|c|c|c|c|c|c|c|c|c|}
		\hline
		Examples & p & r & s & \cnstS{cnst:growthWLeft1} & \cnstS{cnst:growthWRight1} & \cnstS{cnst:cc} & \cnstS{cnst:growthWLeft2} & \cnstS{cnst:growthWRight2}\\
		\hline
		p-Laplacian & $2 \leq p < \infty$ & $2$ & $p - 2$ & $1/p$ & $1/p$ & $p$ & $0$ & $0$\\ 
		& $1 < p \leq 2$ & $p'$ & $0$ & & & & & \\
		\hline
		optimal design & $2$ & $2$ & $0$ & $\mu_1/2$ & $\mu_2/2$ & $2\mu_2$ & 0 & 0\\
		\hline
		relaxed double-well & $4$ & $2$ & $2$ & $1/8$ & $8$ & $\lambda$ & $\kappa$ & $\kappa$\\
		\hline
	\end{tabular}
	\vspace{0.5em}
	\caption{Parameters in \eqref{ineq:growthW}--\eqref{ineq:cc} in the examples of \Cref{sec:examples}}
	\label{tab:introExamples}
\end{table}
\vspace*{-1.5em}
\subsubsection{p-Laplace}\label{ex:pLaplace}
The minimization of the energy $E:V \to \R$ with the energy density $W:\R^n \to \R$ with $1 < p < \infty$ and
\begin{align*}
W(a) \coloneqq |a|^p/p \quad\text{for any } a\in \R^n
\end{align*}
is related to the nonlinear PDE $-\div\big(|\D v|^{p-2}\D v\big) = f \in L^{p'}(\Omega)$. The energy density $W$ satisfies \eqref{ineq:growthW}--\eqref{ineq:cc} with the constants of \Cref{tab:introExamples}. The value $\cnstS{cnst:cc} = 1 + \max\{1,p-2\}^2$ for $2 \leq p$ is derived in \cite[Lemma 2.2--2.3]{CKlose2003} for the related formula \eqref{ineq:ccMonotonicty}. The authors verified $\cnstS{cnst:cc} = p$ in \eqref{ineq:cc} for $p = 2,3,\dots,6$, but $\cnstS{cnst:cc} > p$ for integer $p = 7, 8, \dots$. 
\subsubsection{Optimal design problem}\label{ex:ODP}
The optimal design problem seeks the optimal distribution of two materials with fixed amounts to fill a given domain for maximal torsion stiffness \cite{KohnStrang1986,BartelsC2008}. For fixed parameters $0 < \xi_1 < \xi_2$ and $0 < \mu_1 < \mu_2$ with $\xi_1\mu_2 = \xi_2\mu_1$, the energy density $W(a) \coloneqq \psi(\xi)$, $a \in \R^n$, $\xi \coloneqq |a| \geq 0$ with
\begin{align*}
\psi(\xi) \coloneqq \begin{cases}
\mu_2 \xi^2/2 &\mbox{if } 0 \leq \xi \leq \xi_1,\\
\xi_1\mu_2(\xi - \xi_1/2) &\mbox{if } \xi_1 \leq \xi \leq \xi_2,\\
\mu_1 \xi^2/2 - \xi_1\mu_2(\xi_1/2 - \xi_2/2) &\mbox{if } \xi_2 \leq \xi
\end{cases}
\end{align*}
satisfies \eqref{ineq:growthW}--\eqref{ineq:cc} with the constants from \cite[Prop.~4.2]{BartelsC2008} displayed in \Cref{tab:introExamples}.
\subsubsection{Relaxed two-well problem}
The convex envelope $W$ of $|F - F_1|^2|F - F_2|^2$ for $F \in \R^n$ and fixed distinct $F_1, F_2 \in \R^n$ in the two-well problem of \cite{ChipotCollins1992} reads
\begin{align*}
W(F) = \max\{0,|F - B|^2 - |A|^2\}^2 + 4\big(|A|^2 |F-B|^2 - (A \cdot (F - B))^2 \big)
\end{align*}
with $A = (F_2 - F_1)/2$, $B = (F_1 + F_2)/2$, and satisfies \eqref{ineq:growthW}--\eqref{ineq:cc} with the constants of \Cref{tab:introExamples}, $\kappa \coloneqq 8\max\{|F_1|^4,|F_2|^4\}$ from \cite{CPlechac1997}, and $\cnstS{cnst:cc} = \lambda \coloneqq 32\max\{1, |A|^2, |A|^2/2 + 2|B|^2\}$ from \cite{C2008}.
%\subsubsection{An example of the Ericksen-James energy}
%Given $\kappa_1, \kappa_2 > 0$ and the bilinear form
%\begin{align*}
%	a(A,B) = a_{11}b_{12} + a_{12}b_{11} + a_{21}b_{22} + a_{22}b_{21} \quad\text{for } A,B \in \R^{2 \times 2},
%\end{align*}
%the convex envelope $W$ of $\kappa_1(|A|^2 - 2)^2 + 2^{-1}\kappa_2a(A,A)^2$ with
%\begin{align*}
%	W(A) = \begin{cases}
%		0 &\mbox{if } |a(A,A)| \leq 2-|A|^2,\\
%		\kappa_1(|A|^2 - 2)^2 + \frac{\kappa_2}{2}a(A,A)^2 &\mbox{if } \kappa_2 |a(A,A)| \leq 4\kappa_1(|A|^2 - 2),\\
%		\frac{\kappa_1\kappa_2}{4\kappa_1 + \kappa_2}(|A|^2-2+|a(A,A)|^2) &\mbox{otherwise},
%	\end{cases}
%\end{align*}
%satisfies \eqref{ineq:growthW} and \eqref{ineq:cc} with the parameters $p=4$,$r=2$, and $s=2$ \cite[Lemma 3.3]{Knees2008}
\subsection{Main results}\label{subsec:main}
The hybrid high-order (HHO) discretization features a split of the degrees of freedom into volume variables of polynomial degree at most $\ell$ and skeletal variables of polynomial degree at most $k$, $v_h = (v_\Tcal,v_\Fcal) \in V_h$.
The proposed numerical scheme replaces $\D v$ in \eqref{intro:energy} by a gradient reconstruction $R v_h$ of $v_h \in V_h$ in a linear space $\Sigma(\Tcal)$, the piecewise Raviart-Thomas or BDM finite element functions, for a shape-regular triangulation $\Tcal$ of $\Omega$ into simplices with maximal mesh-size $h_{\max}$.
This ensures the stability of $R$ in \Cref{lem:bondednessStabilityGradRec}.b, no additional penalization (called stabilization in HHO context) is required.
The discrete analog to \eqref{intro:energy} reads
\begin{align}
E_h(v_h) \coloneqq \int_{\Omega} W(R v_h) \d{x} - \int_\Omega f\cdot v_\Tcal \d{x} \quad\text{for } v_h = (v_\Tcal,v_\Fcal) \in V_h. \label{pr:disMinEnergy}
\end{align}
Details on the HHO method and the linear map $R:V_h \to \Sigma(\Tcal)$ follow in \Cref{sec:HHOMethod} and include the proofs of the following statements.
Any discrete minimizer $u_h \in V_h$ defines the unique discrete stress $\sigma_h \coloneqq \Pi_{\Sigma(\Tcal)} \D W(R u_h) \in \Sigma(\Tcal)$ with the $L^2$ projection $\Pi_{\Sigma(\Tcal)}$ onto $\Sigma(\Tcal)$.
The results from \Cref{sec:error} apply to the examples in \Cref{tab:introExamples} and lead to the a~priori and a~posteriori estimates in \Cref{thm:apriori}--\ref{thm:aposteriori}, and
imply the convergence rate $\|\sigma - \sigma_h\|_{L^{p'}(\Omega)} + |E(u) - E_h(u_h)| \lesssim h_{\max}^{(k+1)/r}$ (\Cref{cor:convergenceRate}) for smooth functions $\sigma,u,f$.
This extends the a~priori results in \cite{CGuentherRabus2012} to methods of higher polynomial degrees. Let $u$ be an arbitrary minimizer of $E$ in $V$ and $\sigma \coloneqq \D W(\D u)$.
\begin{theorem}[a~priori]\label{thm:apriori}
	There exist positive constants $\cnstL{cnst:aprioriLeft2}, \dots, \cnstL{cnst:aprioriRight}$ such that any discrete minimizer $u_h$ of $E_h$ in $V_h$ and the discrete stress $\sigma_h \coloneqq \Pi_{\Sigma(\Tcal)} \D W(R u_h)$ satisfy (a)--(c).
	\begin{enumerate}[wide]
		\item[(a)] The discrete stress $\sigma_h$ is unique in the sense that the definition does not depend on the choice of the (possibly non-unique) discrete minimizer $u_h$.
		\item[(b)]
		$\sigma_h \in Q(f,\Tcal) \coloneqq \{\tau_h \in \Sigma(\Tcal) \cap W^{p'}(\div,\Omega;\M):\div \tau_h = - \Pi_\Tcal^\ell f\}$.
		\item[(c)]
		$\begin{aligned}[t]
		&\max\big\{\newcnstL\label{cnst:aprioriLeft2}^{-1}\|\sigma - \sigma_h\|^r_{L^{p'}(\Omega)},  \newcnstL\label{cnst:aprioriLeft1}^{-1}\|\sigma - \D W(R u_h)\|^r_{L^{p'}(\Omega)}\big\}\\
		&\quad \leq E^*(\sigma) - \max E^*(Q(f,\Tcal)) + \newcnstL\label{cnst:DuCP} \osc_\ell(f,\Tcal) + \newcnstL\label{cnst:aprioriRight} \|(1 - \Pi_{\Sigma(\Tcal)})\D u\|^{r'}_{L^{p}(\Omega)}
		\end{aligned}$\\
		with the dual energy $E^*$ of $E$ from \eqref{def:dualEnergy} below.
	\end{enumerate}
\end{theorem}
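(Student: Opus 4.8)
The plan is to combine three ingredients: (i) convex duality, which identifies $E^*(\sigma) = \max E^*(V)$ with the primal minimum $-E(u)$ and gives the discrete counterpart $\max E^*(Q(f,\Tcal))$ related to $-E_h(u_h)$; (ii) the convexity control \eqref{ineq:cc}, which converts an energy gap into an $L^{p'}$ bound on a stress difference; and (iii) the stability and approximation properties of the gradient reconstruction $R$ from \Cref{lem:bondednessStabilityGradRec} together with the commuting/projection properties of $\Pi_{\Sigma(\Tcal)}$. Part (a) should follow directly from (c): if $u_h$ and $\widetilde u_h$ are two discrete minimizers, then $E_h(u_h) = E_h(\widetilde u_h)$, so the right-hand side of (c), applied with $\D W(R u_h)$ replaced by the difference $\D W(Ru_h) - \D W(R\widetilde u_h)$, forces $\sigma_h = \Pi_{\Sigma(\Tcal)}\D W(Ru_h)$ to be independent of the choice; more cleanly, one shows $\D W(Ru_h) = \D W(R\widetilde u_h)$ a.e.\ via \eqref{ineq:cc} applied with $A = Ru_h$, $B = R\widetilde u_h$ at the common minimal energy, exploiting convexity of $W$ along the segment.

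For part (b), I would first establish that $\sigma_h$ lies in $\Sigma(\Tcal)$ by construction and then identify $\div\sigma_h$. The discrete Euler--Lagrange equation for the minimizer $u_h$ of $E_h$ reads $\int_\Omega \D W(Ru_h):Rv_h\d x = \int_\Omega f\cdot v_\Tcal\d x$ for all $v_h\in V_h$; testing with volume-only and skeletal degrees of freedom separately, and using the defining property of the reconstruction $R$ (an integration-by-parts identity against piecewise Raviart--Thomas/BDM functions), should yield exactly $\div\sigma_h = -\Pi_\Tcal^\ell f$ in each simplex together with continuity of the normal component across faces, i.e.\ $\sigma_h\in W^{p'}(\div,\Omega;\M)$. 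This is the step where the specific design of the HHO scheme (no stabilization, RT/BDM reconstruction) is used, and care is needed to keep track of which polynomial space each test object lives in so that the $L^2$ projections collapse as intended.

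Part (c) is the crux. I would start from the convexity control \eqref{ineq:cc} with $A = \D u$ and $B = Ru_h$ (and separately $B = $ a suitable object producing $\sigma - \sigma_h$), integrate over $\Omega$, and bound the $(1+|A|^s+|B|^s)$ factor via the two-sided growth \eqref{ineq:growthW} and a Hölder inequality, so that the left-hand side becomes (up to constants) $\|\sigma - \D W(Ru_h)\|_{L^{p'}(\Omega)}^r$ and, after inserting the $L^2$ projection, $\|\sigma-\sigma_h\|_{L^{p'}(\Omega)}^r$. The integrated right-hand side is $\int_\Omega (W(Ru_h) - W(\D u) - \D W(\D u):(Ru_h - \D u))\d x = E_h(u_h) - E(u) + (\text{data terms})$; here the term $-\int_\Omega \D W(\D u):(Ru_h-\D u)\d x = -\int_\Omega\sigma:(Ru_h-\D u)\d x$ is handled by an integration by parts against $\div\sigma = -f$, producing the $f\cdot u_\Tcal$ matching term plus an oscillation remainder $\osc_\ell(f,\Tcal)$ and a consistency remainder controlled by $\|(1-\Pi_{\Sigma(\Tcal)})\D u\|_{L^p(\Omega)}$ through the stability of $R$. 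Finally I would replace $E_h(u_h) - E(u)$ by the dual quantities: $-E(u) = E^*(\sigma)$ by strong duality, and $-E_h(u_h) \le -E^*(\sigma_h) \le -\max E^*(Q(f,\Tcal))$ using part (b) and the (discrete) weak duality $E_h(u_h)\ge E^*(\tau_h)$ for $\tau_h\in Q(f,\Tcal)$; combining gives $E_h(u_h) - E(u)\le E^*(\sigma) - \max E^*(Q(f,\Tcal))$, which closes the estimate after collecting constants from \eqref{ineq:growthW}--\eqref{ineq:cc} and \Cref{lem:bondednessStabilityGradRec}.

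The main obstacle I anticipate is the bookkeeping in part (c): correctly splitting the right-hand side of the integrated convexity control into the dual-energy gap plus exactly the advertised oscillation and projection-error remainders, without losing the sign or introducing non-computable terms, and simultaneously ensuring the $(1+|\D u|^s+|Ru_h|^s)$ weight is absorbed with the right Hölder exponents so that the exponent $r$ on the left and $r'$ on the projection term come out as stated. The duality identifications (strong duality for the continuous problem, weak duality for the discrete one via part (b)) are standard once $Q(f,\Tcal)$ is shown to be the correct admissible set, so I expect those to be routine; the delicate quantitative part is entirely in matching remainder terms.
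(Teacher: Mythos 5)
Your sketches for (a) and (b) coincide with the paper's argument (in \Cref{lem:uniquenessDiscrStress}): uniqueness follows from \eqref{ineq:cc} with $A=Ru_1$, $B=Ru_2$, where the discrete Euler--Lagrange equations \eqref{eq:dELE} make the right-hand side equal to $E_h(u_1)-E_h(u_2)=0$, and (b) follows by testing \eqref{eq:dELE} with $(0,v_\Fcal)$ and $(v_\Tcal,0)$. The gap is in (c), which the paper obtains from \Cref{thm:aprioriGeneral} with $t=1+s/p$.

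First, after choosing $A=\D u$, $B=Ru_h$ in \eqref{ineq:cc}, the integrated linear term is $-\int_\Omega\sigma:(Ru_h-\D u)\d{x}$, and your plan to handle $\int_\Omega\sigma:Ru_h\d{x}$ ``by an integration by parts against $\div\sigma=-f$'' does not go through: $Ru_h$ is not the gradient of a $W^{1,p}_0$ function and $\sigma\notin\Sigma(\Tcal)$, so neither the continuous integration by parts nor the defining identity \eqref{def:grRecGeneral} applies. If you insert $\Pi_{\Sigma(\Tcal)}\sigma$ and use \eqref{def:grRecGeneral}, the remainders are elementwise divergences and face jumps of $(1-\Pi_{\Sigma(\Tcal)})\sigma$ tested against $u_\Tcal$ and $u_\Fcal$, which are not the advertised terms $\osc_\ell(f,\Tcal)$ and $\|(1-\Pi_{\Sigma(\Tcal)})\D u\|_{L^{p}(\Omega)}^{r'}$. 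The paper avoids this by (i) taking the convexity control with the linear term at the \emph{discrete} object, i.e.\ $\xi=Ru_h$, $\varrho=\D u$ in \eqref{ineq:ccPrimalStressAppr}, and (ii) introducing the maximizer $\sigma_\mathrm{M}$ of $E^*$ over $Q(f,\Tcal)$: since $\sigma_\mathrm{M}\in\Sigma(\Tcal)$ has vanishing normal jumps and $\div\sigma_\mathrm{M}=-\Pi_\Tcal^\ell f$, the identity \eqref{def:grRecGeneral} gives $(Ru_h,\sigma_\mathrm{M})_{L^2(\Omega)}=(f,u_\Tcal)_{L^2(\Omega)}$ exactly, and the $L^2$ orthogonality $\sigma_\mathrm{M}-\D W(Ru_h)\perp RV_h$ combined with the commutativity $R\I u=\Pi_{\Sigma(\Tcal)}\D u$ of \Cref{lem:bondednessStabilityGradRec}.d is precisely what produces the $\|(1-\Pi_{\Sigma(\Tcal)})\D u\|^{r'}$ term after a Young inequality.

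Second, your duality bookkeeping is inverted. Since $\sigma_h\in Q(f,\Tcal)$, one has $E^*(\sigma_h)\le\max E^*(Q(f,\Tcal))$, not the reverse, and weak duality $E^*(\tau_h)\le E_h(u_h)$ bounds $E_h(u_h)$ from \emph{below}, which cannot bound $E_h(u_h)-E(u)$ from above; hence the claimed inequality $E_h(u_h)-E(u)\le E^*(\sigma)-\max E^*(Q(f,\Tcal))$ does not follow from what you invoke (and no one-sided comparison of $E_h(u_h)$ with $E(u)$ is available for this nonconforming scheme without the consistency analysis). The paper never needs such a bound: the estimate containing $E^*(\sigma)-E_h(u_h)$ (Step~2 of the proof of \Cref{thm:aprioriGeneral}, inequality \eqref{ineq:PrAPrioriCC1}) is \emph{added} to the separate estimate $\cnstL{cnst:proofAprioriLeft2}^{-1}\|\sigma_\mathrm{M}-\D W(Ru_h)\|^r\le E_h(u_h)-E^*(\sigma_\mathrm{M})$ (Step~1, \eqref{ineq:PrAPrioriCC2}, which rests on the Fenchel identity $E^*(\D W(Ru_h))=E_h(u_h)$ obtained from \eqref{eq:dELE}), so that $E_h(u_h)$ cancels and $E^*(\sigma)-E^*(\sigma_\mathrm{M})=E^*(\sigma)-\max E^*(Q(f,\Tcal))$ emerges. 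The Step-1 control of $\|\sigma_\mathrm{M}-\D W(Ru_h)\|$ is also what allows passing from $\|\sigma-\D W(Ru_h)\|$ to $\|\sigma-\sigma_h\|$ via $\|\sigma_h-\D W(Ru_h)\|\le\big(1+\|\Pi_{\Sigma(\Tcal)}\|_{\mathcal{L}(L^{p'}(\Omega;\M))}\big)\|\sigma_\mathrm{M}-\D W(Ru_h)\|$; your ``insert the $L^2$ projection'' step has no substitute for this and would otherwise leave an uncontrolled term $\|(1-\Pi_{\Sigma(\Tcal)})\D W(Ru_h)\|$ (or $\|(1-\Pi_{\Sigma(\Tcal)})\sigma\|$) that is not on the right-hand side of the theorem.
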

The guaranteed lower energy bound $\mathrm{LEB} \coloneqq E^*(\sigma_h) - \cnstL{cnst:DuCP}\mathrm{osc}_\ell(f,\mathcal{T}) \leq \min E(V)$ in \Cref{thm:aposteriori}.a below displays superlinear convergence rates. For the lowest-order discretization $k = 0$ and $\ell = 0$, this is superior in comparison to \cite{Ortner2011,OrtnerPraetorius2011} in the sense that $\|h f\|_{L^{p'}(\Omega)}$ is replaced by the higher-order term $\osc_0(f,\Tcal)$. The a~posteriori estimate in \Cref{thm:aposteriori}.b enables guaranteed error control without additional information and motivates an adaptive scheme.
\begin{theorem}[a~posteriori]\label{thm:aposteriori}
	Let $u_h$ minimize $E_h$ in $V_h$.
	There exist positive constants $\cnstL{cnst:LEB},\dots,\cnstL{cnst:aposterioriRight}$ such that the discrete stress $\sigma_h \coloneqq \Pi_{\Sigma(\Tcal)} \D W(R u_h)$ and any $v \in W^{1,p}_0(\Omega;\R^m)$ satisfy (a)--(b).
	\begin{enumerate}[wide]
		\item[(a)] (LEB) $\newcnstL\label{cnst:LEB}^{-1}\|\sigma - \sigma_h\|^r_{L^{p'}(\Omega)} + E^*(\sigma_h) - \cnstL{cnst:DuCP} \osc_\ell(f,\Tcal) \leq \min E(V)$.
		\item[(b)]$\begin{aligned}[t]
		&\newcnstL\label{cnst:aposterioriLeft1}^{-1}\|\sigma - \sigma_h\|^r_{L^{p'}(\Omega)} + \cnstL{cnst:aprioriLeft1}^{-1}\|\sigma - \D W(R u_h)\|^r_{L^{p'}(\Omega)} \leq E_{h}(u_h) - E^*(\sigma_h)\\
		&\qquad\qquad + \cnstL{cnst:DuCP} \textup{osc}_\ell(f,\mathcal{T}) + \newcnstL\label{cnst:aposterioriRight} \|R u_h - \D v\|_{L^{p}(\Omega)}^{r'} - \int_\Omega v\cdot(1-\Pi_\Tcal^\ell)f \d{x}.
		\end{aligned}$
	\end{enumerate}
\end{theorem}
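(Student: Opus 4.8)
The plan is to combine Fenchel duality between $E$ and $E^*$ with the convexity control \eqref{ineq:cc} and the defining relation of the reconstruction $R$. Write $D_W(A,B):=W(A)-W(B)-\D W(B):(A-B)\geq0$ for the Bregman divergence, so that \eqref{ineq:cc} reads $|\D W(A)-\D W(B)|^r\leq\cnstS{cnst:cc}(1+|A|^s+|B|^s)D_W(B,A)$ and, by symmetry in $A,B$, also $\leq\cnstS{cnst:cc}(1+|A|^s+|B|^s)D_W(A,B)$. Three preliminaries carry the argument. (i) Strong duality: $\sigma=\D W(\D u)$, $-\div\sigma=f$ give $\min E(V)=E(u)=E^*(\sigma)$ with $E^*(\tau)=-\int_\Omega W^*(\tau)\d{x}$. (ii) A stress-error bound from \eqref{ineq:cc}: applied to the pair $(\D u,R u_h)$ it gives, after a Hölder estimate absorbing the weight by the a~priori $L^p$ bounds on $\D u,R u_h$ (from \eqref{ineq:growthW} and $E_h(u_h)\leq E_h(0)$), $\cnstL{cnst:aprioriLeft1}^{-1}\|\sigma-\D W(R u_h)\|_{L^{p'}(\Omega)}^r\leq\int_\Omega D_W(R u_h,\D u)\d{x}$; and since $W\in C^1(\M)$ with $p$-growth makes $\D W$ onto, for $\tau\in W^{p'}(\div,\Omega;\M)$ and a selection $B\in\partial W^*(\tau)$ (so $\D W(B)=\tau$) one has $D_W(\D u,B)=W(\D u)+W^*(\tau)-\tau:\D u$, whence $\cnstL{cnst:LEB}^{-1}\|\sigma-\tau\|_{L^{p'}(\Omega)}^r\leq\int_\Omega(W(\D u)+W^*(\tau)-\tau:\D u)\d{x}$. (iii) The identity $\int_\Omega\sigma_h:R u_h\d{x}=\int_\Omega f\cdot u_\Tcal\d{x}$, from the defining relation of $R$ tested with $\tau_h=\sigma_h\in\Sigma(\Tcal)$, the $H(\div)$-conformity of $\sigma_h$, and $\div\sigma_h=-\Pi_\Tcal^\ell f$ of \Cref{thm:apriori}(b).

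For part (a), apply (ii) at $\tau=\sigma_h$ and integrate by parts ($\sigma_h\in W^{p'}(\div,\Omega;\M)$, $u\in W^{1,p}_0(\Omega;\R^m)$), turning $\int_\Omega\sigma_h:\D u\d{x}$ into $\int_\Omega\Pi_\Tcal^\ell f\cdot u\d{x}$; together with $E(u)=\min E(V)$ and $E^*(\sigma_h)=-\int_\Omega W^*(\sigma_h)\d{x}$ this rearranges to
\begin{align*}
	\cnstL{cnst:LEB}^{-1}\|\sigma-\sigma_h\|_{L^{p'}(\Omega)}^r+E^*(\sigma_h)\leq\min E(V)+\int_\Omega(1-\Pi_\Tcal^\ell)f\cdot u\d{x}.
\end{align*}
Because $(1-\Pi_\Tcal^\ell)f\perp P_\ell(\Tcal;\R^m)$, the last integral equals $\int_\Omega(1-\Pi_\Tcal^\ell)f\cdot(u-\Pi_\Tcal^\ell u)\d{x}$, bounded by $\cnstL{cnst:DuCP}\,\osc_\ell(f,\Tcal)$ via an elementwise Poincaré inequality and the $L^p$ bound on $\D u$; this is (a).

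For part (b), expand $\int_\Omega D_W(R u_h,\D u)\d{x}$ and use $\int_\Omega\sigma:\D u\d{x}=\int_\Omega f\cdot u\d{x}$, $\int_\Omega W(\D u)\d{x}=\min E(V)+\int_\Omega f\cdot u\d{x}$, $\int_\Omega W(R u_h)\d{x}=E_h(u_h)+\int_\Omega f\cdot u_\Tcal\d{x}$, and $\int_\Omega\sigma:R u_h\d{x}=\int_\Omega f\cdot u_\Tcal\d{x}+\int_\Omega(\sigma-\sigma_h):R u_h\d{x}$ from (iii) to get $\int_\Omega D_W(R u_h,\D u)\d{x}=E_h(u_h)-\min E(V)-\int_\Omega(\sigma-\sigma_h):R u_h\d{x}$. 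For the arbitrary $v\in W^{1,p}_0(\Omega;\R^m)$ write $R u_h=\D v+(R u_h-\D v)$: since $\div(\sigma-\sigma_h)=-(1-\Pi_\Tcal^\ell)f$, integration by parts yields $\int_\Omega(\sigma-\sigma_h):\D v\d{x}=\int_\Omega v\cdot(1-\Pi_\Tcal^\ell)f\d{x}$, and Young's inequality bounds $\bigl|\int_\Omega(\sigma-\sigma_h):(R u_h-\D v)\d{x}\bigr|$ by $\tfrac12\cnstL{cnst:LEB}^{-1}\|\sigma-\sigma_h\|_{L^{p'}(\Omega)}^r+\cnstL{cnst:aposterioriRight}\|R u_h-\D v\|_{L^p(\Omega)}^{r'}$. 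Eliminate $-\min E(V)$ by part (a) in the form $E^*(\sigma_h)-\cnstL{cnst:DuCP}\,\osc_\ell(f,\Tcal)\leq\min E(V)-\cnstL{cnst:LEB}^{-1}\|\sigma-\sigma_h\|_{L^{p'}(\Omega)}^r$ and collect the two $\|\sigma-\sigma_h\|_{L^{p'}(\Omega)}^r$ contributions as $\cnstL{cnst:aposterioriLeft1}^{-1}\|\sigma-\sigma_h\|_{L^{p'}(\Omega)}^r$ (with $\cnstL{cnst:aposterioriLeft1}=2\cnstL{cnst:LEB}$); this gives (b).

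The main obstacle is the cross term $\int_\Omega\sigma:R u_h\d{x}$: since $\sigma\notin\Sigma(\Tcal)$ and $R u_h$ has no global potential, it is reached only through (iii) — the defining relation of $R$, the $H(\div)$-conformity of $\sigma_h$, and $\div\sigma_h=-\Pi_\Tcal^\ell f$, all established in \Cref{sec:HHOMethod} from the discrete Euler--Lagrange equations — whereupon $\sigma$ is replaced by $\sigma_h$ and the remainder $\sigma-\sigma_h$ is paired with $R u_h-\D v$. The secondary difficulty is the weighted Hölder step in (ii), valid because the a~priori $L^p$ control of $\D u$, $R u_h$ and of the selection $B\in\partial W^*(\sigma_h)$ matches the exponent $sp'/(r-p')$ — equal to $p$ in the examples of \Cref{tab:introExamples}, where $r\geq p'$ — together with keeping the constants $\cnstL{cnst:LEB}$, $\cnstL{cnst:aprioriLeft1}$, $\cnstL{cnst:DuCP}$, $\cnstL{cnst:aposterioriLeft1}$, $\cnstL{cnst:aposterioriRight}$ consistent with \Cref{thm:apriori}. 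The remaining manipulations are routine convex duality.
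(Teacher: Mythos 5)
Your proposal is correct and takes essentially the same route as the paper's own proof (\Cref{thm:aposterioriGeneral} specialized to $t=1+s/p$, so $r/t=p'$, $r/(r-t)=p$): part (a) coincides with it, and in part (b) you merely rewrite the paper's dual integrand $W^*(\sigma)-W^*(\D W(R u_h))-R u_h:(\sigma-\D W(R u_h))$ as the primal Bregman term $W(R u_h)-W(\D u)-\sigma:(R u_h-\D u)$ via Fenchel's identity, route the discrete information through $\int_\Omega \sigma_h:R u_h\d{x}=\int_\Omega f\cdot u_\Tcal\d{x}$ instead of $E^*(\D W(R u_h))=E_h(u_h)$ plus the orthogonality $\sigma_h-\D W(R u_h)\perp\Sigma(\Tcal)$, and eliminate $\min E(V)$ by substituting (a) rather than adding the two estimates --- all algebraically equivalent, with the same weighted convexity control, splitting $R u_h=\D v+(R u_h-\D v)$, integration by parts, piecewise Poincar\'e bound for the oscillation, and weighted Young inequality. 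The only point you gloss over is the measurability of the pointwise selection in $\partial W^*$ needed to make the weight $1+|\D u|^s+|B|^s$ integrable, which the paper settles in \Cref{lem:cc} via a measurable-selection theorem; this is a minor technicality, not a gap in the argument.
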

Notice that the right-hand side of \Cref{thm:aposteriori}.b is computable with some post-processing of $v \in W^{1,p}_0(\Omega;\R^m)$ as demonstrated in the numerical examples of \Cref{sec:numericalExamples}.
\subsection{Outline of this paper}
The remaining parts of this paper are organized as follows. \Cref{sec:continuousSetting} recalls known results on the continuous level. \Cref{sec:HHOMethod} reviews the discretization with the HHO methodology. The error analysis in \Cref{sec:error} is established in a general framework.
The a~priori results in \Cref{sec:apriori} include error estimates for the stress approximation $\sigma - \sigma_{h}$ and the energy difference $E(u) - E_h(u_{h})$ as well as a discussion on the convergence rates.
A lower energy bound of $\min E(V)$ is the point of departure in the a~posteriori analysis in \Cref{sec:aposteriori}. The a~posteriori estimates allow the computation of a guaranteed upper error bound with some post-processing.
Numerical results for the three model examples of \Cref{tab:introExamples} are presented in \Cref{sec:numericalExamples} with conclusions drawn from the numerical experiments.
\subsection{Notation}
Standard notations for Sobolev and Lebesgue functions and in convex analysis (recalled in \Cref{sec:preliminary} below) apply throughout this paper. In particular, $(\bullet, \bullet)_{L^2(\Omega)}$ denotes the scalar product of $L^2(\Omega)$ and $W^{p'}(\div,\Omega;\M) \coloneqq W^{p'}(\div,\Omega)^m$ is the matrix-valued version of
\begin{align}
W^{p'}(\div,\Omega) \coloneqq \{\tau \in L^{p'}(\Omega;\R^n): \div \tau \in L^{p'}(\Omega)\}.\label{def:Hdiv}
\end{align}
For a Banach space $X$, $\mathcal{L}(X)$ denotes the space of bounded linear operators $L:X \to X$ endowed with the operator norm $\|\bullet\|_{\mathcal{L}(X)}$. 
For any $A,B \in \M \coloneqq \R^{m \times n}$, $A:B$ denotes the Euclidean scalar product of $A$ and $B$, which induces the Frobenius norm $|A| \coloneqq (A:A)^{1/2}$ in $\M$. For $1 < p < \infty$, $p' = p/(p-1)$ denotes the H\"older conjugate of $p$ with $1/p + 1/p' = 1$.
The notation $A \lesssim B$ abbreviates $A \leq CB$ for a generic constant $C$ independent of the mesh-size and $A \approx B$ abbreviates $A \lesssim B \lesssim A$. Generic constants are written as $c_j$ or $C_j$, where $\cnstS{cnst:growthWLeft1}, \dots, \cnstS{cnst:growthDWStarCnst}$ exclusively depend on $\cnstS{cnst:growthWLeft1},\dots,\cnstS{cnst:growthWRight2}$, while $\cnstL{cnst:aprioriLeft2}, \dots, \cnstL{cnst:xi}$ may additionally depend on the domain, the shape-regularity of the triangulations, and the parameters $k,\ell,m,n,p,r,s,t$ ($t$ from \Cref{sec:error}).
\section{Convex analysis and preliminaries on the continuous level}\label{sec:continuousSetting}
This section starts with common duality tools in convex analysis and further properties of the energy density $W$, followed by a summary of known results concerning the minimizer $u$ and the stress $\sigma \coloneqq \D W(\D u)$. 
\subsection{Convex analysis for functions with two-sided growth}\label{sec:preliminary}
Let $W^*:\M \to \R$ denote the convex conjugate of $W$ \cite[Corollary 12.2.2]{Rockafellar1970} with
\begin{align}
W^*(G) \coloneqq \sup_{A \in \R^{m \times n}} (G:A - W(A)) \quad \text{for any } G \in \M.\label{def:ConvexConj}
\end{align}
The subdifferential $\partial W^*$ of $W^*$ at $G \in \M$ \cite[Section 23]{Rockafellar1970} is the set of matrices $A \in \M$ that satisfy
\begin{align}
A:(H - G) \leq W(H) - W(G) \quad\text{for all } H \in \M.\label{def:SubDiff}
\end{align}
The relation $A \in \partial W^*(G)$ is equivalent to $G:A = W(A) + W^*(G)$ \cite[Theorem 23.5]{Rockafellar1970}. This implies the equivalence of the convexity control in \eqref{ineq:cc} and
\begin{align}
|G - H|^r &\leq \cnstS{cnst:cc}(1 + |A|^s + |B|^s)(W^*(H) - W^*(G) - A:(H-G)) \label{ineq:ccDual}
\end{align}
for any $G,H \in \M$, $A \in \partial W^*(G)$, and $B \in \partial W^*(H)$.
\begin{lemma}\label{lem:GrowthConj}
	Let $W \in C^1(\M)$ be convex and satisfy the two-sided growth in \eqref{ineq:growthW}. Then $W^* \in C(\M)$ and there exist constants $\cnstS{cnst:growthDW},\dots,\cnstS{cnst:growthDWStarCnst}$ that satisfy (a)--(c).
	\begin{enumerate}[wide]
		\item[(a)] (growth of $\D W$) $|\D W(A)|^{p'} \leq \newcnstS\label{cnst:growthDW} |A|^p + \newcnstS\label{cnst:growthDWCnst}$ for all $A \in \M$.
		\item[(b)] (two-sided growth of $W^*$) $\newcnstS\label{cnst:growthWStarLeft1} |G|^{p'} - \cnstS{cnst:growthWRight2} \leq W^*(G) \leq \newcnstS\label{cnst:growthWStarRight1} |G|^{p'} + \cnstS{cnst:growthWLeft2}$ for all $G \in \M$.
		\item[(c)] (growth of $\partial W^*$) $|A|^{p} \leq \newcnstS\label{cnst:growthDWStar} |G|^{p'} + \newcnstS\label{cnst:growthDWStarCnst}$ for all $G \in \M$ and $A \in \partial W^*(G)$.
	\end{enumerate}
\end{lemma}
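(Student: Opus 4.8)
The plan is to derive all three estimates directly from the two-sided growth \eqref{ineq:growthW} of $W$ together with the convex-conjugate formula \eqref{def:ConvexConj} and the Fenchel equality $G:A = W(A) + W^*(G)$ for $A \in \partial W^*(G)$. Continuity of $W^*$ on all of $\M$ follows from the coercivity $W(A) \geq \cnstS{cnst:growthWLeft1}|A|^p - \cnstS{cnst:growthWLeft2}$ with $p > 1$: the supremum in \eqref{def:ConvexConj} is attained on a bounded set (depending continuously on $G$), so $W^*$ is finite everywhere and, being convex and finite on the open set $\M$, is automatically continuous \cite[Theorem 10.1]{Rockafellar1970}.

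For part (b), I would estimate $W^*$ from \eqref{def:ConvexConj} by inserting the two-sided bounds on $W$. For the upper bound, $W^*(G) = \sup_A(G:A - W(A)) \leq \sup_A(G:A - \cnstS{cnst:growthWLeft1}|A|^p) + \cnstS{cnst:growthWLeft2}$; the scalar maximization $\sup_{t \geq 0}(|G|t - \cnstS{cnst:growthWLeft1}t^p)$ is elementary and yields $\newcnstS^{\phantom{}}|G|^{p'} + \cnstS{cnst:growthWLeft2}$ for an explicit constant depending on $\cnstS{cnst:growthWLeft1}$ and $p$. For the lower bound, test the supremum with a single cleverly chosen $A$: picking $A = t\,G/|G|$ with the optimal $t$ against the upper growth $W(A) \leq \cnstS{cnst:growthWRight1}|A|^p + \cnstS{cnst:growthWRight2}$ gives $W^*(G) \geq \sup_{t \geq 0}(|G|t - \cnstS{cnst:growthWRight1}t^p) - \cnstS{cnst:growthWRight2} = \newcnstS^{\phantom{}}|G|^{p'} - \cnstS{cnst:growthWRight2}$. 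This is purely a one-dimensional optimization and the constants come out in closed form.

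Part (a) on the growth of $\D W$ follows from (b) via the Fenchel equality: since $W \in C^1(\M)$, we have $\D W(A) \in \partial W(A)$, hence $A \in \partial W^*(\D W(A))$ and $\D W(A):A = W(A) + W^*(\D W(A))$. Using the lower bound on $W^*$ from (b) and the upper bound on $W$ from \eqref{ineq:growthW}, together with Young's inequality $\D W(A):A \leq |\D W(A)|\,|A| \leq \varepsilon|\D W(A)|^{p'} + C(\varepsilon)|A|^p$, one absorbs the $\varepsilon|\D W(A)|^{p'}$ term into $\newcnstS^{\phantom{}}|\D W(A)|^{p'}$ coming from the lower $W^*$-bound and rearranges to obtain $|\D W(A)|^{p'} \leq \newcnstS^{\phantom{}}|A|^p + \newcnstS^{\phantom{}}$. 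Part (c) is the dual statement: for $A \in \partial W^*(G)$ we again have $G:A = W(A) + W^*(G)$; now use the lower growth of $W$ from \eqref{ineq:growthW} and the upper growth of $W^*$ from (b), plus Young's inequality on $G:A \leq |G|\,|A|$, absorbing the $|A|^p$ contribution into $\cnstS{cnst:growthWLeft1}|A|^p$ to conclude $|A|^p \leq \newcnstS^{\phantom{}}|G|^{p'} + \newcnstS^{\phantom{}}$.

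I do not anticipate a genuine obstacle; the proof is a sequence of elementary scalar optimizations and applications of Young's inequality. The only point requiring a little care is bookkeeping the constants so that they depend \emph{only} on $\cnstS{cnst:growthWLeft1},\dots,\cnstS{cnst:growthWRight2}$ and $p$ (as the paper's convention for lowercase $c_j$ demands), which is automatic here since every step uses only \eqref{ineq:growthW}, the definition of $p'$, and Fenchel duality. The mild subtlety worth stating explicitly is why $W^*$ is real-valued everywhere — this is exactly where the coercivity $p>1$ in the \emph{left} inequality of \eqref{ineq:growthW} enters — after which continuity is a standard fact for finite convex functions on $\M$.
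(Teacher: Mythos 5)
Your argument for (b) coincides with the paper's: bound $W^*$ from above by inserting the lower growth of $W$ into \eqref{def:ConvexConj} and solving the scalar maximization, and from below by testing the supremum with a single matrix aligned with $G$ against the upper growth of $W$ (the paper's choice $A = (\cnstS{cnst:growthWRight1}p)^{1-p'}|G|^{(2-p)/(p-1)}G$ is exactly your optimal $t\,G/|G|$). For (a) and (c) you take a genuinely different route: you use the Fenchel equality $G:A = W(A)+W^*(G)$ for $A\in\partial W^*(G)$ (with $G=\D W(A)$, $A\in\partial W^*(\D W(A))$ in case (a)) together with Young's inequality and an absorption of the small term, whereas the paper never invokes the conjugacy identity in this lemma: it perturbs the argument in the subgradient inequality \eqref{def:SubDiff}, choosing $H = G + \tfrac12(\cnstS{cnst:growthWStarRight1}p')^{1-p}|A|^{p-2}A$, then applies the two-sided growth of $W^*$ with Jensen's inequality to prove (c), and obtains (a) by running the identical argument with $W$ in place of $W^*$. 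Both are elementary and yield constants depending only on $\cnstS{cnst:growthWLeft1},\dots,\cnstS{cnst:growthWRight2}$ and $p$; your Fenchel--Young absorption is the more standard textbook device, while the paper's perturbation trick proves (a) and (c) by one and the same computation. Your continuity argument for $W^*$ (finite everywhere by the coercivity $p>1$ in the left inequality of \eqref{ineq:growthW}, hence continuous as a finite convex function on $\M$) is also fine.

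One bookkeeping slip should be corrected, though it does not affect the viability of the route: in (a) and (c) you cite the wrong halves of the growth bounds. For (a), starting from $\cnstS{cnst:growthWStarLeft1}|\D W(A)|^{p'} - \cnstS{cnst:growthWRight2} \le W^*(\D W(A)) = \D W(A):A - W(A)$, you must bound $-W(A)$ from above, which requires the \emph{lower} growth $W(A)\ge \cnstS{cnst:growthWLeft1}|A|^p - \cnstS{cnst:growthWLeft2}$, not the upper bound you name; Young's inequality with a small parameter then absorbs $\varepsilon|\D W(A)|^{p'}$ as you describe. Dually, for (c), from $\cnstS{cnst:growthWLeft1}|A|^p - \cnstS{cnst:growthWLeft2} \le W(A) = G:A - W^*(G)$ you must bound $-W^*(G)$ from above, which requires the \emph{lower} bound of $W^*$ from (b) (or simply $W^*(G)\ge -W(0)$), not its upper bound. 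With these two substitutions your absorption argument goes through verbatim, so this is a sign slip rather than a gap.
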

\begin{proof}
	The proofs involve elementary calculations only and are outlined below.
	
	\emph{Proof of \Cref{lem:GrowthConj}.b.} The growth condition in \eqref{ineq:growthW} and the definition of $W^*$ in \eqref{def:ConvexConj} imply, for any $A,G \in \M$, that
	\begin{align}
	G:A - \cnstS{cnst:growthWRight1}|A|^{p} - \cnstS{cnst:growthWRight2} \leq G:A - W(A) \leq G:A - \cnstS{cnst:growthWLeft1}|A|^p + \cnstS{cnst:growthWLeft2}. \label{ineq:PrTSGr}
	\end{align}
	The choice $A \coloneqq (\cnstS{cnst:growthWRight1}p)^{1-p'}|G|^{(2-p)/(p-1)} G$ in \eqref{ineq:PrTSGr} results in
	\begin{align*}
	G:A - \cnstS{cnst:growthWRight1}|A|^{p} - \cnstS{cnst:growthWRight2} = (\cnstS{cnst:growthWRight1}p)^{1-p'}(p')^{-1}|G|^{p'} - \cnstS{cnst:growthWRight2} \eqqcolon \cnstS{cnst:growthWStarLeft1}|G|^{p'} - \cnstS{cnst:growthWRight2}.
	\end{align*}
	This proves $\cnstS{cnst:growthWStarLeft1}|G|^{p'} - \cnstS{cnst:growthWRight2} \leq W^*(G)$ for all $G \in \M$. H\"older and Young inequality show that
	$G:A \leq (\cnstS{cnst:growthWLeft1} p)^{1-p'}(p')^{-1}|G|^{p'} + \cnstS{cnst:growthWLeft1}|A|^p \eqqcolon \cnstS{cnst:growthWStarRight1}|G|^{p'} + \cnstS{cnst:growthWLeft1}|A|^p$.
	This and \eqref{ineq:PrTSGr} imply $W^*(G) \leq \cnstS{cnst:growthWStarRight1}|G|^{p'} + \cnstS{cnst:growthWLeft2}$ for all $G \in \M$.
	
	\emph{Proof of \Cref{lem:GrowthConj}.c.} The choice $H \coloneqq G + 2^{-1}(\cnstS{cnst:growthWStarRight1}p')^{1-p}|A|^{p-2}A$ in \eqref{def:SubDiff} shows
	\begin{align*}
	A:(H - G) = 2^{-1}(\cnstS{cnst:growthWStarRight1}p')^{1-p}|A|^{p}.
	\end{align*}
	The two-sided growth of $W^*$ from (b) and Jensen's inequality prove that
	\begin{align*}
	W^*(H) - W^*(G) &\leq \cnstS{cnst:growthWStarRight1}|H|^{p'} + \cnstS{cnst:growthWLeft2} - \cnstS{cnst:growthWStarLeft1}|G|^{p'} + \cnstS{cnst:growthWRight2}\\
	&\leq (2^{p'-1}\cnstS{cnst:growthWStarRight1} - \cnstS{cnst:growthWStarLeft1})|G|^{p'} + \cnstS{cnst:growthWLeft2} + \cnstS{cnst:growthWRight2} + 2^{-1}\cnstS{cnst:growthWStarRight1}^{1-p}(p')^{-p}|A|^{p}.
	\end{align*}
	Since $A:(H - G) \leq W^*(H) - W^*(G)$, the combination of the two previous formulas results in $2^{-1}\cnstS{cnst:growthWStarRight1}^{1-p}(p')^{-p}(p'-1)|A|^{p} \leq (2^{p'-1}\cnstS{cnst:growthWStarRight1} - \cnstS{cnst:growthWStarLeft1})|G|^{p'} + \cnstS{cnst:growthWLeft2} + \cnstS{cnst:growthWRight2}$ for all $G \in \M$. This proves (c).
	
	\emph{Proof of \Cref{lem:GrowthConj}.a.} Note that the proof of (c) only requires a two-sided growth of $W^*$ of order $p'$. The same arguments apply to $W$ and show $|\D W(A)|^{p'} \leq \cnstS{cnst:growthDW}|A|^{p} + \cnstS{cnst:growthDWCnst}$.
\end{proof}
\begin{remark}[monotonicity]\label{rem:monotonicity}
	The convexity control in \eqref{ineq:cc} implies the monotonicity of $\D W$ in the sense that, for any $A,B \in \M$,
	\begin{align}
	\begin{split}
		|\D W(A) - \D W(B)|^r &\leq \cnstS{cnst:cc}(1 + |A|^s + |B|^s)\\
		&\qquad\times(\D W(A) - \D W(B)):(A - B).
	\end{split}\label{ineq:ccMonotonicty}
	\end{align}
	Conversely, the monotonicity of $\D W$, the growth of $\D W$ in \Cref{lem:GrowthConj}.a, and the growth of $\partial W^*$ in \Cref{lem:GrowthConj}.c imply the convexity control in \eqref{ineq:cc} \cite[Lemma 2.2]{Knees2008}.
\end{remark}
\subsection{A review of known results}\label{sec:knownResults}
Recall the continuous energy $E$ from \eqref{intro:energy} with \eqref{ineq:growthW}--\eqref{ineq:cc} and $W^*$ from \eqref{def:ConvexConj}. Define the dual energy $E^*:L^{p'}(\Omega;\M) \to \R$ by
\begin{align}
E^*(\tau) \coloneqq -\int_{\Omega} W^*(\tau) \d{x} \quad\text{for } \tau \in L^{p'}(\Omega;\M)\label{def:dualEnergy}
\end{align}
and recall $W^{p'}(\div,\Omega;\M)$ from \eqref{def:Hdiv}.
\begin{theorem}\label{thm:reviewResults} The minimal energy $\min E(V)$ is attained. Any minimizer $u$ of $E$ in $V$ and
the stress $\sigma \coloneqq \D W(\D u) \in L^{p'}(\Omega;\M)$ satisfy (a)--(d) with constants $\cnstL{cnst:Du}$ and $\cnstL{cnst:sigma}$.
	\begin{enumerate}[wide]
		\item[(a)] The definition of $\sigma$ does not depend on the choice of the minimizer $u$ and $\sigma \in Q(f) \coloneqq \{\tau\in W^{p'}(\div,\Omega;\M): \div \sigma = -f\}$.
		\item[(b)] The stress $\sigma$ is the unique maximizer of the dual energy $E^*$ in $Q(f)$
		without duality gap in the sense that $\min E(V) = E(u) = E^*(\sigma) = \max E^*(Q(f))$.
		\item[(c)] $\|\D u\|_{L^p(\Omega)} \leq \newcnstL\label{cnst:Du}$ and $\|\sigma\|_{L^{p'}(\Omega)} \leq \newcnstL\label{cnst:sigma}$.
		\item[(d)] If $f \in L^{p'}(\Omega;\R^m) \cap W^{1,p'}_{\mathrm{loc}}(\Omega;\R^m)$, then $\sigma \in W^{1,p'}_{\mathrm{loc}}(\Omega;\M)$.
		\item[(e)] Suppose that $p+s < pr$, $q \coloneqq pr/(p+s) > 1$, $\delta > 0$, and $f \in L^{p'}(\Omega;\R^m)$. Then $\sigma \in W^{1/r - \delta,q}(\Omega;\M)$.
		If $q/r < n$ (resp.~$q/r = n$), then $\sigma \in L^{q^*}(\Omega;\M)$ for $q^* < nq/(n-q/r)$ (resp.~$q^* = \infty$).
		In particular, the choice $q = p'$ in any example of \Cref{sec:examples} leads to $\sigma \in W^{1/r-\delta, p'}(\Omega;\M)$.
	\end{enumerate}
\end{theorem}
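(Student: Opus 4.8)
The plan is to treat parts (a)--(d) as essentially classical consequences of the direct method and convex duality, and to invest the real work in part (e), which is a fractional-regularity statement driven by the convexity control \eqref{ineq:cc}. For the existence of a minimizer, I would run the direct method: the lower bound in \eqref{ineq:growthW} gives coercivity on $V = W^{1,p}_0(\Omega;\R^m)$ (via Poincaré to absorb the linear term $\int_\Omega f\cdot v$), so a minimizing sequence is bounded in $V$; by reflexivity it has a weakly convergent subsequence, and convexity of $W$ plus $C^1$-regularity gives weak lower semicontinuity of $E$, so the weak limit is a minimizer. The bound $\|\D u\|_{L^p(\Omega)} \le \cnstL{cnst:Du}$ in (c) falls out of the same coercivity estimate evaluated at the minimizer, and then $\|\sigma\|_{L^{p'}(\Omega)} \le \cnstL{cnst:sigma}$ follows from the growth of $\D W$ in \Cref{lem:GrowthConj}.a. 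For (a), the Euler--Lagrange equation $\int_\Omega \D W(\D u):\D v \d{x} = \int_\Omega f\cdot v\d{x}$ for all $v \in V$ is exactly the statement that $\sigma = \D W(\D u)$ satisfies $\div\sigma = -f$ in the distributional sense, hence $\sigma \in Q(f)$; uniqueness of $\sigma$ (independence of the chosen minimizer) comes from the convexity control: if $u_1,u_2$ are both minimizers, convexity of $E$ forces the whole segment to be minimizing, and inserting $A = \D u_1$, $B = \D u_2$ into \eqref{ineq:cc} together with $W(\D u_2) - W(\D u_1) - \D W(\D u_1):(\D u_2 - \D u_1)$ integrating to zero (since both achieve the minimum and the midpoint does too, the excess vanishes a.e.) yields $\D W(\D u_1) = \D W(\D u_2)$ a.e.

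For (b), I would set up the standard Fenchel duality between $E$ and $E^*$: for any $v \in V$ and any $\tau \in Q(f)$, the Fenchel--Young inequality $W(\D v) + W^*(\tau) \ge \D v : \tau$ integrated over $\Omega$, combined with $\int_\Omega \D v:\tau\d{x} = -\int_\Omega v\cdot\div\tau\d{x} = \int_\Omega f\cdot v\d{x}$ (integration by parts, using $v \in W^{1,p}_0$ and $\tau \in W^{p'}(\div,\Omega;\M)$), gives $E(v) \ge E^*(\tau)$, hence $\min E(V) \ge \sup E^*(Q(f))$. Equality and attainment then follow because the pair $(u,\sigma)$ with $\sigma = \D W(\D u)$ realizes equality in Fenchel--Young pointwise (the defining property of the subdifferential, here $\sigma(x) \in \partial W(\D u(x))$ since $W$ is $C^1$), so $E(u) = E^*(\sigma)$; uniqueness of the maximizer follows from strict concavity of $E^*$ in the relevant directions, again via \eqref{ineq:ccDual}. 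Part (d) is the interior regularity statement from \cite{CMueller02}: given $f \in W^{1,p'}_{\mathrm{loc}}$, a difference-quotient / Nirenberg translation argument applied to the Euler--Lagrange system, using the monotonicity \eqref{ineq:ccMonotonicty} to control $|\D W(\D u(\cdot+he)) - \D W(\D u)|^r$ by the increment of the energy, upgrades $\sigma$ to $W^{1,p'}_{\mathrm{loc}}$; I would cite this rather than reprove it.

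The substantive part is (e). The idea is a \emph{global} difference-quotient estimate, not merely local, exploiting that the right-hand side $f$ is globally $L^{p'}$ and that \eqref{ineq:cc} controls the $L^r$-norm of the increment of $\sigma$ by (a weighted version of) the increment of the energy. Concretely, for a translation vector $h$ I would test the Euler--Lagrange equation at $u$ and at the translated argument with the difference $\tau_h u - u$ (suitably cut off near $\partial\Omega$ to stay in $V$), producing an identity whose left side, via \eqref{ineq:cc} with $A = \D u(x+h)$, $B = \D u(x)$, dominates $\int |\sigma(x+h)-\sigma(x)|^r / (1+|\D u(x+h)|^s+|\D u(x)|^s)\d{x}$; applying Hölder with exponents matched so that $q = pr/(p+s)$ — here the hypothesis $p+s<pr$ is exactly what makes $q>1$ — converts this into a bound of the form $\|\tau_h\sigma - \sigma\|_{L^q}^{?} \lesssim |h|^{\theta}$ up to boundary corrections. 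Summing dyadically (or integrating $|h|^{-n-\theta r}\,$ against the increment) gives $\sigma \in W^{1/r-\delta,q}$ for every $\delta>0$; the loss of $\delta$ absorbs the boundary cutoff error and the non-integer bookkeeping. The Sobolev embedding $W^{1/r-\delta,q} \hookrightarrow L^{q^*}$ with $q^* < nq/(n-q/r)$ when $q/r<n$ (and $q^* = \infty$ in the borderline case $q/r = n$) is then standard. \textbf{Main obstacle.} The delicate point is the \emph{global} difference-quotient step near $\partial\Omega$: the polyhedral Lipschitz boundary does not admit clean tangential translations, so one must localize with a partition of unity, flatten the boundary, use tangential translations there, and handle the normal direction separately — and one must verify that the weight $(1+|\D u|^s + |\D u(\cdot+h)|^s)^{-1}$ does not destroy the Hölder bookkeeping, which is precisely where $q = pr/(p+s)$ and the hypothesis $p+s<pr$ enter. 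I expect the boundary localization and the Hölder exponent matching to be the two places where care is genuinely required; everything else is bookkeeping or citation.
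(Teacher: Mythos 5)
Your treatment of (a)--(d) is essentially the paper's own argument: direct method with coercivity from \eqref{ineq:growthW}, Euler--Lagrange equations giving $\div\sigma=-f$, uniqueness of $\sigma$ from the convexity control \eqref{ineq:cc} together with the vanishing of the integrated convexity excess between two minimizers (the paper cites \cite{CPlechac1997} for exactly this), weak Fenchel duality via Fenchel--Young plus integration by parts and equality through $\D u\in\partial W^*(\sigma)$, uniqueness of the dual maximizer from strict convexity of $W^*$ via \eqref{ineq:ccDual}, the bound $\cnstL{cnst:Du}$ from the coercivity estimate and $\cnstL{cnst:sigma}$ from \Cref{lem:GrowthConj}.a, and (d) by citation of \cite{CMueller02}. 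The genuine divergence is in (e): the paper does not prove the fractional regularity at all, it cites \cite[Theorem~2.2]{Knees2008} for $\sigma\in W^{1/r-\delta,q}(\Omega;\M)$ and then applies the fractional Sobolev embedding of \cite{DiNezzaPalatucciValdinoci2012}, whereas you attempt to re-derive Knees' theorem by a global difference-quotient argument. Your outline is the right strategy and the exponent bookkeeping is sound: splitting $|\Delta\sigma|^q$ by H\"older with exponents $r/q$ and $r/(r-q)$ puts the weight $(1+|\D u|^s+|\D u(\cdot+h)|^s)$ to the power $sq/(r-q)=p$ when $q=pr/(p+s)$, which is exactly why $p+s<pr$ is needed. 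But as a self-contained proof it is incomplete precisely at the step you yourself flag: the boundary localization (flattening, tangential versus normal translations, control of the cutoff errors and of the $\delta$-loss) on a polyhedral Lipschitz domain is the technical heart of \cite{Knees2008} and is not supplied. So either close (e) by citation, as the paper does --- which costs nothing and is the intended route --- or accept that the sketch leaves the hardest part of the regularity theorem open; the remainder of your proposal stands.
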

\begin{proof}
	The existence of a minimizer follows from the direct method of the calculus of variations \cite[Theorem 3.30]{Dacorogna2008}.
	The growth of $\D W$ in \Cref{lem:GrowthConj}.a leads to $\sigma \in L^{p'}(\Omega;\M)$ and \cite[Theorem 3.37]{Dacorogna2008} guarantees the Euler-Lagrange equations
	\begin{align}
	\int_{\Omega} \sigma:\D v \d{x} = \int_{\Omega} f\cdot v\d{x} \quad\text{for any } v \in V.\label{eq:ELE}
	\end{align}
	
	\emph{Proof of (a).} The uniqueness of $\sigma$ is shown in \cite[Theorem 2]{CPlechac1997}. The Euler-Lagrange equations \eqref{eq:ELE} imply $f + \div \sigma = 0$ and $\sigma \in Q(f)$.
		
	\emph{Proof of (b).} The definition of $W^*$ in \eqref{def:ConvexConj} proves $\tau:\D u \leq W(\D u) + W^*(\tau)$ a.e.~in $\Omega$ for any $\tau \in Q(f)$. An integration by parts shows that
	\begin{align}
	E^*(\tau) = -\int_{\Omega} W^*(\tau) \d{x} \leq \int_\Omega W(\D u) \d{x} - \int_{\Omega} \tau:\D u \d{x} = E(u).\label{ineq:dualEnergy}
	\end{align}
	The duality $\D u \in \partial W^*(\sigma)$ reads $\sigma:\D u = W(\D u) + W^*(\sigma)$ a.e.~in $\Omega$ \cite[Corollary 12.2.2]{Rockafellar1970}. The combination of \eqref{eq:ELE}--\eqref{ineq:dualEnergy} leads to $E^*(\tau) \leq E^*(\sigma) = E(u)$. Notice from \eqref{ineq:ccDual} that $W^*$ is strictly convex. Hence, the maximizer of $E^*$ in $Q(f)$ is unique.
	
	\emph{Proof of (c).} The constant $\cnstL{cnst:Du}$ in (c) is the positive root of the function $\cnstS{cnst:growthWLeft1}x^p -  C_P\|f\|_{L^{p'}(\Omega)}x - \cnstS{cnst:growthWLeft2}|\Omega| - E(0)$ \cite[Proof of Theorem 2]{CPlechac1997} in $x > 0$ with the Poincar\'e constant $C_P$. On convex domains, $C_P \leq 1/\pi$ is proven in \cite{Bebendorf2003,PayneWeinberger1960} for $p = 2$.
	The growth of $\D W$ in \Cref{lem:GrowthConj}.a leads to
	$\|\sigma\|_{L^{p'}(\Omega)}^{p'} \leq \cnstS{cnst:growthDW}\cnstL{cnst:Du}^p + \cnstS{cnst:growthDWCnst}|\Omega| \eqqcolon \cnstL{cnst:sigma}^{p'}$.
	
	\emph{Proof of (d).} The local stress regularity $\sigma \in W^{1,p'}_{\mathrm{loc}}(\Omega;\M)$ is shown for $f \in L^{p'}(\Omega;\R^m) \cap W^{1,p'}_{\mathrm{loc}}(\Omega;\R^m)$ in \cite[Theorem 2.1]{CMueller02}.
	
	\emph{Proof of (e).} The global stress regularity $\sigma \in W^{1/r - \delta,q}(\Omega;\M)$ is shown in \cite[Theorem 2.2]{Knees2008}. This and the Sobolev embedding $W^{1/r-\delta,q}(\Omega;\M) \hookrightarrow L^{q*}(\Omega;\M)$ for any $q^* < nq/(n-q/r)$ if $q/r < n$ \cite[Theorem 6.7]{DiNezzaPalatucciValdinoci2012} and $q^* = \infty$ if $q/r = n$ \cite[Theorem 6.10]{DiNezzaPalatucciValdinoci2012} prove the assertion (e). 
\end{proof}
\section{HHO method}\label{sec:HHOMethod}
This section introduces discrete spaces, the gradient reconstruction, and the discrete problem.
\subsection{Triangulation}
A regular triangulation $\Tcal$ of $\Omega$ in the sense of Ciarlet is a finite set of closed simplices $T$ of positive volume $|T| > 0$ with boundary $\partial T$ and outer unit normal $\nu_T$ such that $\cup_{T \in \Tcal} T = \overline{\Omega}$ and two distinct simplices are either disjoint or share one common (lower-dimensional) subsimplex (vertex or edge in 2D and vertex, edge, or face in 3D). 
Let $\Fcal(T)$ denote the set of the $n+1$ hyperfaces of $T$, called sides of $T$, and define the set of all sides $\Fcal = \cup_{T \in \Tcal} \Fcal(T)$ and the set of interior sides $\Fcal(\Omega) = \Fcal\setminus\{F \in \Fcal:F\subset\partial \Omega\}$ in $\Tcal$.

For any interior side $F \in \Fcal(\Omega)$, there exist exactly two simplices $T_+, T_- \in \Tcal$ such that $\partial T_+ \cap \partial T_- = F$. The orientation of the outer normal unit $\nu_F = \nu_{T_+}|_F = -\nu_{T_-}|_F$ along $F$ is fixed. Define the side patch $\omega_F \coloneqq \mathrm{int}(T_+ \cup T_-)$ of $F$ and let $[v]_F \coloneqq (v|_{T_+})|_F - (v|_{T_-})|_F \in L^1(F)$ denote the jump of $v \in L^1(\omega_F)$ with $v \in W^{1,1}(T_+)$ and $v \in W^{1,1}(T_-)$ across $F$.
For any boundary side $F \in \Fcal(\partial \Omega) \coloneqq \Fcal\setminus\Fcal(\Omega)$, $\nu_F \coloneqq \nu_T$ is the exterior unit vector for $F \in \Fcal(T)$ with $T \in \Tcal$ and $[v]_F \coloneqq (v|_T)|_F$.
The characteristic function $\chi_T \in L^\infty(\Omega)$ of $T \in \Tcal$ is equal to $1$ in $T$ and vanishes elsewhere. 
The differential operators $\div_{\pw}$ and $\D_\pw$ depend on the triangulation $\Tcal$ and denote the piecewise application of $\div$ and $\D$ without explicit reference to the triangulation $\Tcal$.

\subsection{Discrete spaces}
The discrete ansatz space of the HHO methods consists of piecewise polynomials on the triangulation $\Tcal$ and on the skeleton $\partial \Tcal \coloneqq \cup \Fcal$. For a simplex or a side $M \subset \R^n$ of diameter $h_M$, let $P_\ell(M)$ denote the space of polynomials of maximal order $\ell$ regarded as functions defined in $M$.
The $L^2$ projection $\Pi_M^\ell v \in P_\ell(M)$ of $v \in L^1(M)$ satisfies
\begin{align*}
	\int_{M} (1-\Pi_M^{\ell})v p_\ell \d{x} = 0 \quad\text{for any } p_\ell\in P_\ell(M).
\end{align*}
The local mesh sizes give rise to the piecewise constant function $h_\Tcal \in P_0(\Tcal)$ with $h_\Tcal|_T \equiv h_T$ in $T \in \Tcal$. Let $\osc_\ell(f,\Tcal) \coloneqq \|h_\Tcal(1 - \Pi_{\Tcal}^\ell) f\|_{L^{p'}(\Omega)}$ denote the data oscillation of $f$ in $\Tcal$.
The gradient reconstruction in $T \in \Tcal$ maps in the piecewise Brezzi-Douglas-Marini finite element functions $\BDM_{k}(T) \coloneqq P_k(T;\R^n)$ or in the space of Raviart-Thomas functions
\begin{align*}
	\RT_k(T) &\coloneqq P_k(T;\R^n) + x P_k(T) \subset \BDM_{k+1}(T).
\end{align*}
Let $P_\ell(\Tcal)$, $P_k(\Fcal)$, $\RT^\pw_k(\Tcal)$, and $\BDM_{k}^\pw(\Tcal)$ denote the space of piecewise functions (with respect to $\Tcal$ and $\Fcal$) with restrictions to $T$ or $F$ in $P_\ell(T)$, $P_k(F)$, $\RT_k(T)$, and $\BDM_k(T)$. Let $\Pi_{\Tcal}^\ell$, $\Pi_{\Fcal}^k$, and $\Pi_{\RT^\pw_k(\Tcal)}$ denote the $L^2$ projections onto the respective discrete spaces. For vector-valued functions $v \in L^1(\Omega;\R^m)$, the $L^2$ projection $\Pi_{\Tcal}^\ell$ onto $P_\ell(\Tcal;\R^m) \coloneqq P_\ell(\Tcal)^m$ applies elementwise. This applies to the $L^2$ projections onto $P_\ell(M;\R^m)$, $P_k(\Fcal;\R^m) \coloneqq P_k(\Fcal)^m$, $\RT_k(\Tcal;\M) \coloneqq \RT_k(\Tcal)^m$, or $\BDM_{k}(\Tcal;\M) \coloneqq \BDM_{k}(\Tcal)^m$ etc.
\subsection{HHO ansatz space}
For fixed $k \in \mathbb{N}_0$ and non-negative $\ell \in \{k,k-1\}$, let
\begin{align}
	V_h \coloneqq P_\ell(\Tcal;\R^m) \times P_k(\Fcal(\Omega);\R^m)\label{def:discretePairing}
\end{align}
denote the discrete ansatz space of HHO methods \cite{DiPietroErnLemaire2014,DiPietroErn2015} with two examples in \eqref{def:discretePairing1} below.
The interior sides $\Fcal(\Omega)$ give rise to $P_k(\Fcal(\Omega);\R^m)$ as the subspace of all $(v_F)_{F \in \Fcal} \in P_k(\Fcal;\R^m)$ with the convention that $v_F = 0$ on any boundary side $F \in \Fcal(\partial \Omega)$ for homogenous boundary conditions. In other words, the notation $v_h \in V_h$ means that $v_h = (v_\Tcal,v_\Fcal) = \big((v_T)_{T \in \Tcal},(v_F)_{F \in \Fcal}\big)$ for some $v_\Tcal \in P_\ell(\Tcal;\R^m)$ and $v_\Fcal \in P_k(\Fcal(\Omega);\R^m)$ with the identification $v_T = v_\Tcal|_T \in P_\ell(T;\R^m)$ and $v_F = v_\Fcal|_F \in P_k(F;\R^m)$. The discrete norm $\|\bullet\|_h$ of $V_h$ from \cite{DiPietroDroniou2017} is defined, for any $v_h \in V_h$, by
\begin{align*}
\|v_h\|_{h}^p \coloneqq \sum_{T \in \Tcal} \|v_h\|_{h,T}^p \quad\text{and}\quad \|v_h\|_{h,T}^p \coloneqq \|\D v_T\|_{L^p(T)}^p + \sum_{F \in \Fcal(T)} h_F^{1-p}\|v_F - v_T\|^p_{L^p(F)}.
\end{align*}
The interpolation $\I : V \to V_h$ maps $v \in V$ onto $\I v \coloneqq (\Pi_{\Tcal}^\ell v, \Pi_{\Fcal}^k v) \in V_h$. Two examples for $V_h$ and $\Sigma(\Tcal)$ are in the focus of this work with
\begin{align}
	\begin{split}
		\Sigma(\Tcal) &= \RT_k^{\pw}(\Tcal;\M) \text{ and } \ell = k \text{ or}\\
		\Sigma(\Tcal) &= \BDM_{k}^{\pw}(\Tcal;\M) \text{ and } \ell = k-1 \in \mathbb{N}_0.
	\end{split}
	\label{def:discretePairing1}
\end{align}
Notice that $k = \ell \in \mathbb{N}_0$ or $k = \ell + 1 \in \mathbb{N}$ in \eqref{def:discretePairing1} and these two cases are labelled by $k = \ell$ and $k = \ell + 1$ throughout the paper; e.g., ``$k = \ell$ in \eqref{def:discretePairing1}'' means in particular that $\Sigma(\Tcal)$ is a Raviart-Thomas finite element space.
\subsection{Gradient reconstruction}
The gradient reconstruction $R:V_h \to \Sigma(\Tcal)$ in \cite{AbbasErnPignet2018} maps $v_h \in V_h$ onto $R v_h \in \Sigma(\Tcal)$ such that, for any $\tau_h \in \Sigma(\Tcal)$,
\begin{align}
\int_\Omega R v_h:\tau_h \d{x} &= -\int_{\Omega} v_\Tcal \cdot \div_{\pw} \tau_h \d{x} + \sum_{F \in \Fcal} \int_{F} v_F \cdot [\tau_h \nu_F]_F \d{s}
\label{def:grRecGeneral}
\end{align}
with the normal jump $[\tau_h \nu_F]_F$ of $\tau_h$ across $F$.
In other words, $R v_h$ is the Riesz representation of the linear functional on the right-hand side of \eqref{def:grRecGeneral} in the Hilbert space $\Sigma(\Tcal)$ endowed with the $L^2$ scalar product. Although $R$ is described here as a global operator, it acts locally and can be computed in parallel for each simplex.
\begin{lemma}\label{lem:bondednessStabilityGradRec}
	The gradient reconstruction operator $R$ satisfies (a)--(d) for any $v \in V$ and $v_h = (v_\Tcal,v_\Fcal) \in V_h$ with some $1 < p < \infty$ and the generic constant $C_\mathrm{dF}$, which depends on the shape regularity of $\Tcal$.
	\begin{enumerate}[wide]
		\item[(a)] (boundedness) $\|R v_h\|_{L^p(\Omega)} \lesssim \|v_h\|_{h}$.
		\item[(b)] (stability) $\|v_h\|_{h} \lesssim \|R v_h\|_{L^p(\Omega)}$.
		\item[(c)] (discrete Friedrichs inequality) $\|v_\Tcal\|_{L^p(\Omega)} \leq C_{\mathrm{dF}} \|R v_h\|_{L^p(\Omega)}$.
		\item[(d)] (commutativity) $\Pi_{\Sigma(\Tcal)} \D v = R \I v$ for the $L^2$ projection $\Pi_{\Sigma(\Tcal)}$ onto $\Sigma(\Tcal)$.
	\end{enumerate}
\end{lemma}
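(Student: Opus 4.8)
The plan is to establish (a)--(d) in the order (d), then (a), then (c), then (b), since the commutation property (d) feeds naturally into the proof of the discrete Friedrichs inequality, and the boundedness estimate (a) is needed before the stability estimate (b) in a compactness-free argument. For part (d), I would start from the defining relation \eqref{def:grRecGeneral} applied to $v_h = \I v = (\Pi_\Tcal^\ell v, \Pi_\Fcal^k v)$: for any $\tau_h \in \Sigma(\Tcal)$ write $\int_\Omega R\I v : \tau_h \d{x} = -\int_\Omega \Pi_\Tcal^\ell v \cdot \div_\pw \tau_h \d{x} + \sum_{F\in\Fcal}\int_F \Pi_\Fcal^k v\cdot[\tau_h\nu_F]_F\d{s}$. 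The crucial point is that $\div_\pw \tau_h$ is piecewise polynomial of degree at most $k$ (for $\RT_k$) or $k-1$ (for $\BDM_k$), which in both cases of \eqref{def:discretePairing1} is $\le \ell$, so $\Pi_\Tcal^\ell$ may be dropped against $\div_\pw\tau_h$; likewise the normal trace $\tau_h\nu_F|_F$ lies in $P_k(F;\R^m)$ (this is exactly the defining feature of the RT and BDM spaces), so $\Pi_\Fcal^k$ may be dropped against $[\tau_h\nu_F]_F$. Reversing the elementwise integration by parts $\int_T \D v:\tau_h\d{x} = -\int_T v\cdot\div\tau_h\d{x} + \int_{\partial T} v\cdot\tau_h\nu_T\d{s}$ on each simplex and collecting interface contributions (the boundary terms on $\partial\Omega$ vanish since $v\in V = W^{1,p}_0$), one gets $\int_\Omega R\I v:\tau_h\d{x} = \int_\Omega \D v:\tau_h\d{x}$ for all $\tau_h\in\Sigma(\Tcal)$, which is precisely $R\I v = \Pi_{\Sigma(\Tcal)}\D v$.

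For the boundedness (a), I would test \eqref{def:grRecGeneral} with $\tau_h = R v_h$ itself and estimate the right-hand side. On each $T$ an integration-by-parts identity rewrites the contribution as $\int_T \D v_T : \tau_h\d{x} + \sum_{F\in\Fcal(T)}\int_F (v_F - v_T)\cdot(\tau_h\nu_T)|_F\d{s}$. The first term is bounded by $\|\D v_T\|_{L^p(T)}\|\tau_h\|_{L^{p'}(T)}$ via H\"older; for the face term, a discrete trace inequality $\|\tau_h\nu_T\|_{L^{p'}(F)}\lesssim h_F^{-1/p'}\|\tau_h\|_{L^{p'}(T)}$ (valid on shape-regular simplices for the finite-dimensional spaces $\RT_k,\BDM_k$, constant $C_{\mathrm{dF}}$) combines with H\"older on $F$ to give the bound $h_F^{1/p-1}\|v_F-v_T\|_{L^p(F)}\cdot h_F^{1/p'}\,h_F^{-1/p'}\|\tau_h\|_{L^{p'}(T)}$; wait---more carefully, $\int_F(v_F-v_T)\cdot(\tau_h\nu_T)\d{s}\le \|v_F-v_T\|_{L^p(F)}\|\tau_h\nu_T\|_{L^{p'}(F)}\lesssim h_F^{(1-p)/p}\|v_F-v_T\|_{L^p(F)}\cdot h_F^{(p-1)/p}\,h_F^{-1/p'}\|\tau_h\|_{L^{p'}(T)}$, and since $h_F^{(p-1)/p} = h_F^{1/p'}$ the scaling works out so that the face term is controlled by $h_F^{(1-p)/p}\|v_F-v_T\|_{L^p(F)}\cdot\|\tau_h\|_{L^{p'}(T)}$ up to the trace constant. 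Summing over $T$, applying discrete H\"older in the $\ell^p$--$\ell^{p'}$ pairing over simplices, and recognising $\|v_h\|_{h,T}$ yields $\|Rv_h\|_{L^p(\Omega)}^2 \lesssim \|v_h\|_h\,\|Rv_h\|_{L^{p'}\text{-type}}$; after taking $\tau_h=Rv_h$ one divides through to obtain $\|Rv_h\|_{L^p(\Omega)}\lesssim\|v_h\|_h$. (One must be slightly careful: testing with $Rv_h$ gives $\|Rv_h\|_{L^2(\Omega)}^2$ on the left, not $\|Rv_h\|_{L^p}$; the clean route is to instead bound the linear functional's norm directly, estimating $\int_\Omega Rv_h:\tau_h$ for arbitrary $\tau_h$ and taking the supremum over $\|\tau_h\|_{L^{p'}(\Omega)}\le 1$, which identifies $\|Rv_h\|_{L^p(\Omega)}$ by duality of Lebesgue spaces restricted to the finite-dimensional $\Sigma(\Tcal)$.)

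For the discrete Friedrichs inequality (c), the key observation is that $v_\Tcal$ is a piecewise polynomial whose jumps across interior sides are $v_\Tcal|_{T_+} - v_\Tcal|_{T_-}$, and these are controlled by $\|v_F - v_{T_+}\|_{L^p(F)} + \|v_F - v_{T_-}\|_{L^p(F)}$ via the triangle inequality through the single-valued face unknown $v_F$; on boundary sides $v_F=0$ pins $v_\Tcal|_T$ to zero in the same trace-controlled sense. A broken Poincar\'e--Friedrichs inequality for piecewise $W^{1,p}$ functions with jump penalisation (see e.g. the Lebesgue-norm version of such estimates) then gives $\|v_\Tcal\|_{L^p(\Omega)}\lesssim \|v_h\|_h$, and combined with the stability (b) this yields the constant $C_{\mathrm{dF}}$ in terms of $\|Rv_h\|_{L^p(\Omega)}$. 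Finally, for the stability (b) I would use part (d): for the interpolation $\I$ one has exact stability since $\Pi_{\Sigma(\Tcal)}$ is an $L^2$-projection, but for general $v_h$ the estimate $\|v_h\|_h\lesssim\|Rv_h\|_{L^p(\Omega)}$ requires a local argument. On each simplex, one recovers $\D v_T$ and the boundary differences $v_F - v_T$ from $Rv_h|_T$ by testing \eqref{def:grRecGeneral} with suitable local $\tau_h$ supported on $T$: choosing $\tau_h$ to match $\D v_T$ (a constant-times-gradient-of-polynomial lies in $P_{\ell-1}\subset\RT_k$ for appropriate degrees, or more simply any element of $P_{k}(T;\R^n)$ for the relevant range) recovers the volumetric part, and choosing $\tau_h$ with prescribed normal trace on a given face $F$ and vanishing normal trace on the others (possible precisely because RT/BDM degrees of freedom include the full face normal moments of degree $k$) isolates $v_F - v_T$. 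A finite-dimensional norm-equivalence on the reference simplex, transported by scaling, then bounds $\|v_h\|_{h,T}$ by $\|Rv_h\|_{L^p(T)}$ uniformly in shape-regular meshes; summing over $T\in\Tcal$ completes (b).

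\emph{The main obstacle.} The delicate step is the local stability (b): one must verify that the data $(\D v_T, (v_F - v_T)_{F\in\Fcal(T)})$ is genuinely recoverable from $Rv_h|_T$ in a norm-equivalent way, which hinges on the compatibility between the HHO unknown degrees ($\ell$ in the interior, $k$ on faces) and the RT$_k$/BDM$_k$ moments in \eqref{def:discretePairing1} --- precisely the reason the two pairings $\ell = k$ and $\ell = k-1$ are singled out. Establishing the scaling of the trace inequality constant $C_{\mathrm{dF}}$ uniformly over shape-regular simplices, and confirming that the needed test functions $\tau_h$ (with one prescribed face normal moment and zero on the rest) actually exist inside $\RT_k(T)$ and $\BDM_k(T)$, is where the argument has to engage with the specific structure of these finite element spaces rather than general functional analysis; everything else is H\"older, triangle inequality, integration by parts, and a broken Poincar\'e estimate.
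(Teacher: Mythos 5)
Your proposal is correct and follows essentially the same route as the paper: local testing of \eqref{def:grRecGeneral} with an elementwise integration by parts and a discrete trace inequality for (a), local test functions in $\RT_k(T;\M)$ or $\BDM_k(T;\M)$ prescribed through their canonical face and volume degrees of freedom plus a scaling argument for (b), a broken discrete Friedrichs inequality combined with (b) for (c), and dropping the $L^2$ projections against $\div_\pw\tau_h\in P_\ell(\Tcal;\R^m)$ and $[\tau_h\nu_F]_F\in P_k(F;\R^m)$ for (d). The only minor variations are that the paper handles the $L^2$-versus-$L^p$ mismatch in (a) by the local norm equivalence $\|Rv_h\|_{L^p(T)}\|Rv_h\|_{L^{p'}(T)}\approx\|Rv_h\|_{L^2(T)}^2$ rather than your restricted-duality argument (which implicitly needs the $L^{p'}$-stability of $\Pi_{\Sigma(\Tcal)}$), and in (b) it uses a single weighted nonlinear test function with moments $\Pi_T^{k-1}\big(|\D v_T|^{p-2}\D v_T\big)$ and $h_F^{1-p}\Pi_F^k\big(|v_F-v_T|^{p-2}(v_F-v_T)\big)$ so that $(Rv_h,\tau_h)_{L^2(T)}=\|v_h\|_{h,T}^p$, instead of recovering the volume and face components separately as you propose; both devices rest on the same dof-compatibility between $(\ell,k)$ and $\RT_k/\BDM_k$ that you correctly identify as the crux.
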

\begin{proof}
	The proof follows \cite[Lemma 1]{AbbasErnPignet2018}, where (a), (b), and (d) are established for $p = 2$. The extension to the case $p \neq 2$ is discussed briefly below.
	
	\emph{Proof of (a).} For any $T \in \Tcal$, the norm equivalence in finite-dimensional spaces shows that $\|R v_h\|_{L^p(T)}\|R v_h\|_{L^{p'}(T)} \approx \|R v_h\|_{L^2(T)}^2$.
	The choice $\tau_h = \chi_T R v_h$ in \eqref{def:grRecGeneral} and an integration by parts imply
	\begin{align*}
		\|R v_h\|_{L^2(T)}^2 = \int_T \D v_T : R v_h \d{x} + \sum_{F \in \Fcal(T)} \int_{F} (v_F - v_T)\cdot (Rv_h \nu_T|_F) \d{s}.
	\end{align*}
	The discrete trace inequality $\|(R v_h)|_T\|_{L^{p'}(F)} \lesssim h_F^{-1/p'}\|R v_h\|_{L^{p'}(T)}$ for any $F \in \Fcal(T)$ and the Cauchy inequality conclude the proof of (a).
	
	\emph{Proof of (b).} Let $k = \ell$.
	For any $T \in \Tcal$, define $\tau_h \in \RT_k(T;\M)$ via the moments
	\begin{align*}
	\Pi_T^{k-1} \tau_h &= \Pi_T^{k-1} \big(|\D v_T|^{p-2}\D v_T\big),\\
	\Pi_F^k (\tau_h \nu_T|_F) &= h_{F}^{1-p} \Pi_F^k \big(|v_{F} - v_T|^{p-2} (v_{F} - v_T)\big) \quad\text{for any } F \in \Fcal(T)
	\end{align*}
	with the convention $P_{-1}(T;\M) = \{0\}$.
	An integration by parts in \eqref{def:grRecGeneral} leads to 
	\begin{align}
	\|v_h\|_{h,T}^p = (R v_h,\tau_h)_{L^2(T)} \leq \|R v_h\|_{L^p(T)} \|\tau_h\|_{L^{p'}(T)}.\label{ineq:proofStabilityGradRec}
	\end{align}
	The stability of Raviart-Thomas functions in terms of their canonical degrees of freedom \cite[Proposition 2.3.4]{BoffiBrezziFortin2013} and a scaling argument prove that
	\begin{align*}
	\|\tau_h\|_{L^{p'}(T)}^{p'} &\approx \|\Pi_T^{k-1}(|\D v_T|^{p-2}\D v_T)\|_{L^{p'}(T)}^{p'}\\
	&\qquad + \sum_{F \in \Fcal(T)} h_F \|h_F^{1-p}\Pi_F^k(|v_{F} - v_T|^{p-2}(v_F - v_T))\|^{p'}_{L^{p'}(F)}.
	\end{align*}
	The stability of the $L^2$ projections $\Pi_T^{k-1}$ and $\Pi_F^k$ in the $L^{p'}$ norm \cite[Lemma 3.2]{DiPietroErn2015} leads to $\|\tau_h\|_{L^{p'}(T)}^{p'} \lesssim \|v_h\|_{h,T}^{p}$. This and \eqref{ineq:proofStabilityGradRec} imply $\|v_h\|_{h,T} \lesssim \|R v_h\|_{L^{p'}(T)}$.
	
	For $k = \ell + 1$ in \eqref{def:discretePairing1}, the definition of $\tau_h \in \BDM_{k}(T;\M)$ follows \cite[Proposition 2.3.1]{BoffiBrezziFortin2013} and previous arguments apply verbatim.
	
	\emph{Proof of (c).} For any $v_h = (v_\Tcal,v_\Fcal) \in V_h$, the estimate $\|v_\Tcal\|_{L^p(\Omega)} \lesssim \|v_h\|_h$ from \cite[Proposition 5.4]{DiPietroDroniou2017} and the stability of $R$ in (b) prove $\|v_\Tcal\|_{L^p(\Omega)} \lesssim \|R v_h\|_{L^p(\Omega)}$.
	
	\emph{Proof of (d).} Since $[\tau_h \nu_F]_F \in P_k(\Fcal;\R^m)$ and $\div_\pw \tau_h \in P_\ell(\Tcal;\R^m)$ for any $\tau_h \in \Sigma(\Tcal)$ and $F \in \Fcal$, an integration by parts in \eqref{def:grRecGeneral} implies $(R \I v, \tau_h)_{L^2(\Omega)} = (\D v, \tau_h)_{L^2(\Omega)}$ for all $v \in V$.
\end{proof}
\subsection{Discrete minimization}
Recall $V_h$ from \eqref{def:discretePairing}, $\Sigma(\Tcal)$ from \eqref{def:discretePairing1}, and the discrete energy $E_h$ from \eqref{pr:disMinEnergy}.
The discrete stress approximation is unique and (globally) $W^{p'}(\div,\Omega)$ conforming in the following sense.
%\begin{align}
%	E_h(v_h) \coloneqq \int_{\Omega} W(R v_h) \d{x} - \int_\Omega f\cdot v_\Tcal \d{x} \quad\text{for } v_h = (v_\Tcal,v_\Fcal) \in V_h. \label{def:disMinEnergy}
%\end{align}
\begin{theorem}\label{lem:uniquenessDiscrStress}
	The minimal discrete energy $\min E_h(V_h)$ is attained. Any discrete minimizer $u_h \in V_h$ and the discrete stress $\sigma_h \coloneqq \Pi_{\Sigma(\Tcal)} \D W(R u_h)$ satisfy (a)--(c) with positive constants $\cnstL{cnst:Ruh}$ and $\cnstL{cnst:sigmah}$.
	\begin{enumerate}[wide]
		\item[(a)] Suppose that $u_1 \in V_h$ and $u_2 \in V_h$ minimize \eqref{pr:disMinEnergy}, then $\D W(R u_1) = \D W(R u_2)$ a.e.~in $\Omega$. In particular, the definition of $\sigma_h = \Pi_{\Sigma(\Tcal)} \D W(R u_h)$ does not depend on the choice of the minimizer $u_h$ of \eqref{pr:disMinEnergy}.
		\item[(b)] $\sigma_h \in Q(f,\Tcal) \coloneqq \{\tau_h \in \Sigma(\Tcal) \cap W^{p'}(\div,\Omega;\M):\div \tau_h = -\Pi_\Tcal^\ell f\}$.
		\item[(c)] $\|R u_h\|_{L^p(\Omega)} \leq \newcnstL\label{cnst:Ruh}$ and $\|\sigma_h\|_{L^{p'}(\Omega)} \leq \newcnstL\label{cnst:sigmah}$.
	\end{enumerate}
\end{theorem}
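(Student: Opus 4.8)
The plan is to establish existence of a discrete minimizer by the direct method and then exploit the convexity structure exactly as in the continuous proof of \Cref{thm:reviewResults}, transplanted to the finite-dimensional setting through the stability of the gradient reconstruction $R$ in \Cref{lem:bondednessStabilityGradRec}.b--c. First I would show $\min E_h(V_h)$ is attained: the discrete trace/reconstruction estimates give $E_h(v_h) \geq \cnstS{cnst:growthWLeft1}\|R v_h\|_{L^p(\Omega)}^p - \cnstS{cnst:growthWLeft2}|\Omega| - \|f\|_{L^{p'}(\Omega)}\|v_\Tcal\|_{L^p(\Omega)}$, and by \Cref{lem:bondednessStabilityGradRec}.c the last term is bounded by $C_{\mathrm{dF}}\|f\|_{L^{p'}(\Omega)}\|R v_h\|_{L^p(\Omega)}$; hence $E_h$ is coercive in $\|R v_h\|_{L^p(\Omega)}$ (note $\|R\bullet\|_{L^p(\Omega)}$ is a norm on $V_h$ by \Cref{lem:bondednessStabilityGradRec}.b, since $\|R v_h\|_{L^p(\Omega)}=0$ forces $\|v_h\|_h=0$ and thus $v_h=0$). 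Continuity of $v_h \mapsto E_h(v_h)$ (from $W \in C^1(\M)$ and linearity of $R$) on the finite-dimensional space $V_h$ then yields a minimizer via Weierstrass.

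For part (a), the Euler--Lagrange equations for a discrete minimizer $u_h$ read $\int_\Omega \D W(R u_h) : R v_h \d{x} = \int_\Omega f \cdot v_\Tcal \d{x}$ for all $v_h \in V_h$. Given two minimizers $u_1, u_2$, testing with $v_h = u_1 - u_2$ in the equation for each and subtracting gives $\int_\Omega (\D W(R u_1) - \D W(R u_2)) : (R u_1 - R u_2) \d{x} = 0$. The monotonicity inequality \eqref{ineq:ccMonotonicty} from \Cref{rem:monotonicity} forces the integrand on the left of \eqref{ineq:ccMonotonicty}, namely $|\D W(R u_1) - \D W(R u_2)|^r$, to vanish almost everywhere (the weight $1+|R u_1|^s+|R u_2|^s$ is positive and integrable against the nonnegative right-hand side, and each pointwise term is nonnegative by monotonicity). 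Hence $\D W(R u_1) = \D W(R u_2)$ a.e., and applying $\Pi_{\Sigma(\Tcal)}$ shows $\sigma_h$ is well defined independently of the minimizer.

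For part (b), I would test the Euler--Lagrange equations with $v_h = (v_\Tcal, 0)$, $v_\Tcal \in P_\ell(\Tcal;\R^m)$: the definition \eqref{def:grRecGeneral} of $R$ gives $\int_\Omega R v_h : \tau_h \d{x} = -\int_\Omega v_\Tcal \cdot \div_\pw \tau_h \d{x}$ for $\tau_h \in \Sigma(\Tcal)$, so $\int_\Omega \D W(R u_h):R(v_\Tcal,0)\d{x} = \int_\Omega \sigma_h : R(v_\Tcal,0)\d{x} = -\int_\Omega v_\Tcal\cdot\div_\pw\sigma_h\d{x}$, and this equals $\int_\Omega f\cdot v_\Tcal\d{x} = \int_\Omega \Pi_\Tcal^\ell f\cdot v_\Tcal\d{x}$; since $\div_\pw \sigma_h \in P_\ell(\Tcal;\R^m)$ (as $\sigma_h \in \Sigma(\Tcal)$), this identity over all $v_\Tcal$ yields $\div_\pw \sigma_h = -\Pi_\Tcal^\ell f$. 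The global $W^{p'}(\div)$-conformity, i.e.\ vanishing normal jumps $[\sigma_h \nu_F]_F = 0$ on interior sides, follows by testing with $v_h = (0, v_\Fcal)$, $v_\Fcal \in P_k(\Fcal(\Omega);\R^m)$: then $\int_\Omega \sigma_h : R(0,v_\Fcal)\d{x} = \sum_{F} \int_F v_F \cdot [\sigma_h\nu_F]_F \d{s} = 0$ because $v_\Tcal = 0$ on the right of the Euler--Lagrange equation, and since $[\sigma_h\nu_F]_F \in P_k(F;\R^m)$ is matched by arbitrary $v_F$, it vanishes; together with $\div_\pw \sigma_h \in L^{p'}(\Omega;\M)$ this gives $\sigma_h \in W^{p'}(\div,\Omega;\M)$.

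For part (c), coercivity as above localizes to a bound: from $E_h(u_h) \leq E_h(0) = -\int_\Omega f\cdot 0 + \int_\Omega W(0)\d{x} \leq \cnstS{cnst:growthWRight1}\cdot 0 + \cnstS{cnst:growthWRight2}|\Omega|$ wait --- more carefully, $E_h(u_h)\le E_h(0)$ and the lower growth bound give $\cnstS{cnst:growthWLeft1}\|R u_h\|_{L^p(\Omega)}^p \leq E_h(0) + \cnstS{cnst:growthWLeft2}|\Omega| + C_{\mathrm{dF}}\|f\|_{L^{p'}(\Omega)}\|R u_h\|_{L^p(\Omega)}$, a quadratic-type inequality in $\|R u_h\|_{L^p(\Omega)}$ whose positive root furnishes $\cnstL{cnst:Ruh}$, mirroring the continuous argument with $C_P$ replaced by $C_{\mathrm{dF}}$. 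Then $\|\sigma_h\|_{L^{p'}(\Omega)} \leq \|\D W(R u_h)\|_{L^{p'}(\Omega)}$ by $L^2$-projection stability, and the growth of $\D W$ in \Cref{lem:GrowthConj}.a yields $\|\sigma_h\|_{L^{p'}(\Omega)}^{p'} \leq \cnstS{cnst:growthDW}\cnstL{cnst:Ruh}^p + \cnstS{cnst:growthDWCnst}|\Omega| =: \cnstL{cnst:sigmah}^{p'}$. The main obstacle I anticipate is purely bookkeeping --- verifying that $\|R\bullet\|_{L^p(\Omega)}$ genuinely is a norm on $V_h$ (needed for Weierstrass to apply on a finite-dimensional normed space) and that the two cases of \eqref{def:discretePairing1} are handled uniformly; the analytic content is light because everything reduces to the monotonicity inequality and the stability properties of $R$ already in hand.
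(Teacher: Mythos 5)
Your proposal is correct and follows essentially the same route as the paper: coercivity of $E_h$ via the lower growth of $W$ and the discrete Friedrichs inequality for existence, the discrete Euler--Lagrange equations combined with the convexity/monotonicity control \eqref{ineq:ccMonotonicty} to get $\D W(Ru_1)=\D W(Ru_2)$ a.e.\ (the paper uses \eqref{ineq:cc} with $E_h(u_1)-E_h(u_2)=0$, a cosmetic difference), testing with $(v_\Tcal,0)$ and $(0,v_\Fcal)$ for (b), and the comparison $E_h(u_h)\le E_h(0)$ for (c). One minor correction: in (c) the $L^2$ projection $\Pi_{\Sigma(\Tcal)}$ is not an $L^{p'}$ contraction, so the estimate must carry the stability constant $\|\Pi_{\Sigma(\Tcal)}\|_{\mathcal{L}(L^{p'}(\Omega;\M))}$ as in the paper's definition of $\cnstL{cnst:sigmah}$ --- harmless here, since only the existence of some constant is asserted.
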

\begin{proof}
	For any $v_h = (v_\Tcal,v_\Fcal) \in V_h$,
	the lower bound of $W$ in \eqref{ineq:growthW} and the discrete Friedrichs' inequality in \Cref{lem:bondednessStabilityGradRec}.c imply
	\begin{align}
		\cnstS{cnst:growthWLeft1}\|R v_h\|_{L^p(\Omega)}^p - \cnstS{cnst:growthWLeft2}|\Omega| - C_{\mathrm{dF}}\|f\|_{L^{p'}(\Omega)}\|R v_h\|_{L^p(\Omega)} \leq E_h(v_h).\label{ineq:lowerBoundDiscreteEnergy}
	\end{align}
	The Young inequality shows for $1 < p < \infty$ that $\inf E_h(V_h) > -\infty$.
	The direct method of the calculus of variations \cite[Theorem 3.30]{Dacorogna2008} proves the existence of discrete minimizers.
	
	\emph{Proof of (a)}. The discrete stress $\sigma_h \coloneqq \Pi_{\Sigma(\Tcal)} \D W(R u_h) \in \Sigma(\Tcal)$ for any discrete minimizer $u_h \in V_h$ satisfies the discrete Euler-Lagrange equations
	\begin{align}
	\int_{\Omega} \sigma_h:R v_h \d{x} = \int_\Omega f\cdot v_\Tcal \d{x} \quad\text{for any } v_h \in V_h. \label{eq:dELE}
	\end{align}
	The choice $A = R u_1$ and $B = R u_2$ in \eqref{ineq:cc} leads to
	\begin{align}
		\begin{split}
			\|\D W(R u_1) &- \D W(R u_2)\|^r_{L^r(\Omega)} \lesssim \big(1 + \|R u_1\|^s_{L^\infty(\Omega)} + \|R u_2\|^s_{L^\infty(\Omega)}\big)\\ &\qquad\times\int_{\Omega} \big(W(R u_1) - W(R u_2) - \D W(R u_2):(R u_1 - R u_2)\big) \d{x}.\label{ineq:proofUniquenessSigmah}
		\end{split}
	\end{align}
	The discrete Euler-Lagrange equations \eqref{eq:dELE} prove that the integral on the right-hand side of \eqref{ineq:proofUniquenessSigmah} is equal to $E_h(u_1) - E_h(u_2) = 0$.
	Thus, $\D W(R u_1) = \D W(R u_2)$ a.e.~in $\Omega$ and $\Pi_{\Sigma(\Tcal)} \D W(R u_1) = \Pi_{\Sigma(\Tcal)} \D W(R u_2)$.
	
	\emph{Proof of (b)}. Given any $v_\Fcal \in P_k(\Fcal(\Omega);\R^m)$, the choice $v_h = (0,v_\Fcal) \in V_h$ in \eqref{eq:dELE} and the definition of the gradient reconstruction $R$ in \eqref{def:grRecGeneral} prove that
	\begin{align*}
	\sum_{F \in \Fcal(\Omega)} \int_{F} [\sigma_h \nu_F]_F \cdot v_F \d{s} = \int_\Omega \sigma_h:R v_h \d{x} = 0.
	\end{align*}
	This $L^2$ orthogonality of $[\sigma_h \nu_F]_F \perp P_k(F;\R^m)$ shows $[\sigma_h \nu_F]_F = 0$ for any $F \in \Fcal(\Omega)$.
	It is well established that the continuity of the normal components of $\sigma_h \in \Sigma(\Tcal)$ leads to $\sigma_h \in H(\div,\Omega;\M)$; the same argument proves $\sigma_h \in W^{p'}(\div,\Omega;\M)$.
	The choice $v_h = (v_\Tcal,0)$ in \eqref{eq:dELE} for any $v_\Tcal \in P_\ell(\Tcal;\R^m)$ leads to $\div \sigma_h = -\Pi_\Tcal^\ell f$.
	
	\emph{Proof of (c).} 
	The choice $v_h = u_h$ in \eqref{ineq:lowerBoundDiscreteEnergy} and $E_h(u_h) \leq E_h(0)$ prove $\|R u_h\|_{L^p(\Omega)} \leq \cnstL{cnst:Ruh}$ for the positive root \cnstL{cnst:Ruh} of the function $\cnstS{cnst:growthWLeft1}x^p - \cnstS{cnst:growthWLeft2}|\Omega| - C_{\mathrm{dF}}\|f\|_{L^{p'}(\Omega)}x - E_h(0)$ in $x > 0$.
	The stability of $\Pi_{\Sigma(\Tcal)}$ in the $L^{p'}$ norm \cite[Lemma 3.2]{DiPietroDroniou2017} leads to $\|\sigma_h\|_{L^{p'}(\Omega)} \leq \|\Pi_{\Sigma(\Tcal)}\|_{\mathcal{L}(L^{p'}(\Omega;\M))} \|\D W(R u_h)\|_{L^{p'}(\Omega)}$. This and the growth of $\D W$ in \Cref{lem:GrowthConj}.a show $\|\sigma_h\|_{L^{p'}(\Omega)}^{p'} \leq \|\Pi_{\Sigma(\Tcal)}\|_{\mathcal{L}(L^{p'}(\Omega;\M))}^{p'}(\cnstS{cnst:growthDW}\cnstL{cnst:Ruh}^p + \cnstS{cnst:growthDWCnst}|\Omega|) \eqqcolon \cnstL{cnst:sigmah}^{p'}$.
\end{proof}
\begin{remark}[global $H(\div)$]
	Any $\sigma_h \in \Sigma(\Tcal)$ that fulfils the discrete Euler-Lagrange equations \eqref{eq:dELE}, satisfies $\sigma_h \in W^{p'}(\div,\Omega;\M)$ with $\div \sigma_h = -\Pi_{\Tcal}^\ell f$. \Cref{lem:uniquenessDiscrStress}.b is not restricted to the minimization problems from \Cref{sec:energyDensity} and, in particular, also applies to the example in \cite{AbbasErnPignet2018}.
\end{remark}
\begin{remark}[mixed FEM]\label{rem:MFEM}
	The mixed finite element scheme of \cite{CGuentherRabus2012} seeks $\sigma_\mathrm{M} = \arg\max E^*(Q(f,\Tcal))$.
	Since $\sigma_\mathrm{M}$ minimizes $\int_\Omega W^*(\bullet) \d{x}$ in $Q(f,\Tcal)$, its subdifferential $\varrho \in \partial W^*(\sigma_\mathrm{M})$ is perpendicular to $Q(0,\Tcal)$ in $L^2(\Omega;\M)$. The definition of $R V_h \subset \Sigma(\Tcal)$ leads to a characterization of any $\tau_\mathrm{M} \in Q(0,\Tcal) = \{\tau_h \in \RT_k(\Tcal;\M): \div \tau_h = 0\}$ by $(\tau_\mathrm{M}, R v_h)_{L^2(\Omega)} = 0$ for all $v_h \in V_h$. This proves $(R V_h)^{\perp} = Q(0,\Tcal)$ in $\Sigma(\Tcal)$. In particular, $\Pi_{\Sigma(\Tcal)} \varrho \in R V_h$. Thus, the mixed FEM seeks $\sigma_\mathrm{M} \in Q(f,\Tcal)$ and $u_\mathrm{M} \in V_h$ with $R u_\mathrm{M} \in \Pi_{\Sigma(\Tcal)} \partial W^*(\sigma_\mathrm{M})$.
\end{remark}
\begin{remark}[comparison to mixed FEM]\label{rem:compareMFEM}
	The unstabilized HHO method of this paper can be rewritten to seek $\sigma_h \in Q(f,\Tcal)$ and $u_h \in V_h$ with $\sigma_h = \Pi_{\Sigma(\Tcal)} \D W(R u_h)$. The two schemes are hence equivalent for a linear problem with a quadratic $W$. But in general, $B \in \Pi_{\Sigma(\Tcal)} \D W(A)$ is \emph{not} equivalent to $A \in \Pi_{\Sigma(\Tcal)} \partial W^*(B)$ for $A, B \in \Sigma(\Tcal)$, because $\Sigma(\Tcal)$ consists of non-constant functions.		
\end{remark}
\begin{remark}[hybridization]\label{rem:hybridization}
	The HHO methodology allows for the elimination of the volume variable as follows: Given $u_\Fcal \in P_k(\Fcal(\Omega);\R^m)$, minimize the convex function $\int_T (W(R u_h) - f \cdot u_\Tcal) \d{x}$ with respect to $u_T \in P_\ell(T;\R^m)$. The solution $u_T = U(u_\Fcal)$ depends on $u_\Fcal$ and leads to $u_h = (u_\Tcal, u_\Fcal) = (U(u_\Fcal), u_\Fcal) \in V_h$. The zero set of the map $G: u_\Fcal \mapsto ([\Pi_{\Sigma(\Tcal)} \D W(R u_h) \nu_F]_F : F \in \Fcal(\Omega))$ characterizes a minimizer $u_h$ of $E_h$ in $V_h$. A numerical implementation may utilize a linearization of $G$ in terms of the skeletal variables in $P_k(\Fcal(\Omega);\R^m)$ as in \cite[Section 3.5]{AbbasErnPignet2018}.
\end{remark}
\section{Error analysis}\label{sec:error} 
Throughout the remaining sections, suppose that $1 + s/p \leq t < r$, $1/t + 1/t' = 1$, and $u \in W^{1,r/(r-t)}_0(\Omega;\R^m)$. This standard assumption on the parameters $r,s,t$ \cite{CPlechac1997,CMueller02,Knees2008} follows a rule of thumb on the growth of $W$ in \eqref{ineq:cc} and holds in all six examples of \cite{Knees2008} and, in particular, in all examples of \Cref{sec:examples}.
\subsection{A~priori error analysis}\label{sec:apriori}
Recall the continuous energy $E$ from \eqref{intro:energy}, the discrete energy $E_h$ from \eqref{pr:disMinEnergy}, and the dual energy $E^*$ from \eqref{def:dualEnergy}. The subsequent a~priori error estimate is analog to \cite[Theorem 2]{CPlechac1997} for conforming FEMs.
\begin{theorem}[a~priori]\label{thm:aprioriGeneral}
	Let $u_h\in V_h$ be a discrete minimizer of $E_h$ in $V_h$. The (unique) discrete stress $\sigma_h = \Pi_{\Sigma(\Tcal)} \D W(R u_h) \in Q(f,\Tcal)$ satisfies (a)--(b) with positive constants $\cnstL{cnst:aprioriLeft2}, \dots, \cnstL{cnst:aprioriRight}$.
	\begin{enumerate}[wide]
		\item[(a)] $\begin{aligned}[t]
			\max\{\cnstL{cnst:aprioriLeft2}^{-1}\|\sigma - \sigma_h\|^r_{L^{r/t}(\Omega)},\cnstL{cnst:aprioriLeft1}^{-1}\|\sigma - \D W(R u_h)\|^r_{L^{r/t}(\Omega)}\}
			\end{aligned} \leq \mathrm{RHS} \coloneqq\\
			\begin{aligned}
			\quad\qquad E^*(\sigma) - \max E^*(Q(f,\Tcal)) + \cnstL{cnst:DuCP} \osc_\ell(f,\Tcal) + \cnstL{cnst:aprioriRight}\|(1 - \Pi_{\Sigma(\Tcal)})\D u\|^{r'}_{L^{r/(r-t)}(\Omega)}.
			\end{aligned}$
		\item[(b)] $\begin{aligned}[t]
			&|E(u) - E_h(u_h)| \leq \max\Big\{E^*(\sigma) - \max E^*(Q(f,\Tcal)),\\
			&\qquad\qquad\cnstL{cnst:DuCP} \osc_\ell(f,\Tcal) + \cnstL{cnst:aprioriRight}\|(1 - \Pi_{\Sigma(\Tcal)})\D u\|^{r'}_{L^{r/(r-t)}(\Omega)} + \frac{r'}{r}\mathrm{RHS}\Big\}.
		\end{aligned}$
	\end{enumerate}
\end{theorem}
Before the remaining parts of this subsection prove \Cref{thm:aprioriGeneral}, it is important to realize that \Cref{thm:aprioriGeneral} is more general than \Cref{thm:apriori}
\begin{proof}[Proof of \Cref{thm:apriori}]
	In the examples from \Cref{sec:examples}, $(p-1)r = p + s$ holds and the choice $t = 1+s/p$ in \Cref{thm:aprioriGeneral} leads to $r/t = p'$ and $r/(r-t) = p$. Since always $u \in W^{1,p}_0(\Omega;\R^m)$, \Cref{thm:apriori} follows from \Cref{thm:aprioriGeneral}.
\end{proof}
The subsequent lemma summarizes two technical tools for the a~priori error analysis.
\begin{lemma}\label{lem:cc}
	Let $1 + s/p \leq t < r$ and $1/t + 1/t' = 1$. For any $\tau, \phi \in L^{p'}(\Omega;\M)$, there exist $\xi,\varrho \in L^p(\Omega;\M)$ such that $\xi \in \partial W^*(\tau)$ a.e.~and $\varrho \in \partial W^*(\phi)$ a.e.~in $\Omega$ with
	\begin{align}
	\begin{split}
	\|\tau - \phi\|^r_{L^{r/t}(\Omega)} &\leq \max\{3,3^{t/t'}\}\cnstS{cnst:cc}\big(|\Omega| + \|\xi\|_{L^p(\Omega)}^p + \|\varrho\|^p_{L^p(\Omega)}\big)^{t/t'}\\
	&\qquad\qquad\times\int_{\Omega} (W^*(\phi) - W^*(\tau) - \xi:(\phi - \tau)) \d{x}.
	\end{split} \label{ineq:ccDualStressAppr}
	\end{align}
	Moreover, any $\xi,\varrho \in L^p(\Omega;\M)$ satisfy
	\begin{align}
		\begin{split}
		\|\D W(\xi) - \D W(\varrho)\|^r_{L^{r/t}(\Omega)} &\leq \max\{3,3^{t/t'}\} \cnstS{cnst:cc} \big(|\Omega| + \|\xi\|^p_{L^p(\Omega)} + \|\varrho\|^p_{L^p(\Omega)}\big)^{t/t'}\\
		&\qquad\times\int_{\Omega} (W(\varrho) - W(\xi) - \D W(\xi):(\varrho - \xi)) \d{x}.
		\end{split} \label{ineq:ccPrimalStressAppr}
	\end{align}
\end{lemma}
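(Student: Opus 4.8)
The plan is to derive both inequalities from the pointwise convexity-control relations \eqref{ineq:cc} and \eqref{ineq:ccDual} by integrating over $\Omega$ and then applying a H\"older inequality in the exponents dictated by $t$. First I would set up the selection of the subgradients: for $\tau \in L^{p'}(\Omega;\M)$, the growth of $\partial W^*$ in \Cref{lem:GrowthConj}.c guarantees that any measurable selection $\xi(x) \in \partial W^*(\tau(x))$ satisfies $|\xi|^p \leq \cnstS{cnst:growthDWStar}|\tau|^{p'} + \cnstS{cnst:growthDWStarCnst}$ a.e., hence $\xi \in L^p(\Omega;\M)$; measurability of such a selection follows from a standard measurable-selection argument since $\partial W^*$ is a maximal monotone (closed convex valued) map. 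Do the same for $\varrho \in \partial W^*(\phi)$.

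For \eqref{ineq:ccDualStressAppr} I would start from \eqref{ineq:ccDual} with $G = \tau(x)$, $H = \phi(x)$, $A = \xi(x)$, $B = \varrho(x)$, giving pointwise
\begin{align*}
|\tau - \phi|^r \leq \cnstS{cnst:cc}\,(1 + |\xi|^s + |\varrho|^s)\,(W^*(\phi) - W^*(\tau) - \xi:(\phi - \tau)).
\end{align*}
Raise nothing; instead integrate the $r/t$ power. Writing $|\tau - \phi|^{r} = \bigl(|\tau-\phi|^{r}\bigr)$ and taking the integral of the left side to the power $t$ is the wrong order, so the correct move is: apply H\"older with conjugate exponents $t'$ and $t$ to the product $(1 + |\xi|^s + |\varrho|^s) \cdot (W^*(\phi) - W^*(\tau) - \xi:(\phi-\tau))^{1/t'}\cdot(\dots)^{1/t}$ — more precisely, bound $\||\tau-\phi|^{r/t}\|_{L^{1}(\Omega)}^{t}$ by splitting each pointwise factor. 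Concretely, set $g \coloneqq (1+|\xi|^s+|\varrho|^s)$ and $w \coloneqq W^*(\phi)-W^*(\tau)-\xi:(\phi-\tau)\geq 0$; then $|\tau-\phi|^r \leq \cnstS{cnst:cc}\,g\,w$, so
\begin{align*}
\|\tau-\phi\|_{L^{r/t}(\Omega)}^{r} = \Bigl(\int_\Omega |\tau-\phi|^{r/t}\d{x}\Bigr)^{t} \leq \cnstS{cnst:cc}^{\,?}\Bigl(\int_\Omega (g\,w)^{1/t}\d{x}\Bigr)^{t}
\end{align*}
and a H\"older step $\int (g w)^{1/t} = \int g^{1/t} w^{1/t} \leq \|g^{1/t}\|_{L^{t'}}\|w^{1/t}\|_{L^{t}} = \|g\|_{L^{t'/t}}^{1/t}\|w\|_{L^1}^{1/t}$ gives, after raising to the $t$-th power, the bound $\cnstS{cnst:cc}\,\|g\|_{L^{t'/t}(\Omega)}^{t/t'}\int_\Omega w\d{x}$ (noting $t/t' \cdot 1 = t\cdot(1/t)$ for the $w$ factor and $t\cdot \tfrac{1}{t}\cdot\tfrac{t}{t'}$ for $g$; the $\max\{3,3^{t/t'}\}$ arises from $(1+|\xi|^s+|\varrho|^s)^{t'/t} \leq 3^{t'/t - 1}(1 + |\xi|^{st'/t} + |\varrho|^{st'/t})$ together with the hypothesis $s/p \leq t-1$, i.e.\ $st'/t \leq p$, so that $|\xi|^{st'/t}\leq 1 + |\xi|^p$ and likewise for $\varrho$, yielding $\|g\|_{L^{t'/t}}^{t/t'} \leq \max\{3,3^{t/t'}\}(|\Omega| + \|\xi\|_{L^p}^p + \|\varrho\|_{L^p}^p)^{t/t'}$ after absorbing constants). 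This is exactly the right-hand side of \eqref{ineq:ccDualStressAppr}. For \eqref{ineq:ccPrimalStressAppr} the identical computation applies starting from \eqref{ineq:cc} with $A = \xi(x)$, $B = \varrho(x)$, this time with no subgradient selection needed since $\xi,\varrho$ are given.

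The one delicate point is bookkeeping the H\"older exponents so that the power $t/t'$ on the volume-plus-norms factor and the first power on the energy integral come out exactly as stated — in particular checking that $\tfrac{s}{p}\leq t-1$ is precisely what makes $st'/t\leq p$, which is where the standing hypothesis $1+s/p \leq t$ is used. The measurable selection of the subgradient is routine but should be cited; everything else is elementary. I expect no genuine obstacle, only careful exponent arithmetic.
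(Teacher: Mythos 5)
Your proposal is correct and follows essentially the same route as the paper: a measurable selection of $\partial W^*$ with the growth bound from \Cref{lem:GrowthConj}.c, the pointwise convexity control \eqref{ineq:ccDual} (resp.~\eqref{ineq:cc}) raised to the power $1/t$, a H\"older inequality with exponents $t,t'$, and the bound $(1+|\xi|^s+|\varrho|^s)^{t'/t}\lesssim 1+|\xi|^p+|\varrho|^p$ from $st'/t\le p$, i.e.~from $1+s/p\le t$. The only cosmetic differences are a harmless slip in the exponent on $\|g\|_{L^{t'/t}(\Omega)}$ in your intermediate bookkeeping (the final bound is nonetheless the stated one) and that the paper dispatches the degenerate case $t=1$ (where $s=0$ and no H\"older step is needed) separately.
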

\begin{proof}[Proof of \Cref{lem:cc}]
	The convex conjugate $W^*$ of $W$ is continuous in $\M$ and $\partial W^*:\M \to 2^\M$ is an outer semicontinuous set-valued, pointwise non-empty function \cite[Proposition 8.6]{RockafellarWets1998}. Since $\partial W^*$ is close-valued, $\partial W^*$ is measurable \cite[Exercise 14.9]{RockafellarWets1998} and there exists a measurable selection $g$ of $\partial W^*$, i.e., the function $g:\M \to \M$ is Borel measurable and $g(F) \in \partial W^*(F)$ for any $F \in \M$ \cite[Corollary 14.6]{RockafellarWets1998}. In particular, $g(\tau) \in \partial W^*(\tau)$ a.e.~in $\Omega$ and $g(\tau)$ is Lebesgue measurable. The growth of $\partial W^*$ in \Cref{lem:GrowthConj}.c leads to $g(\tau) \in L^{p}(\Omega;\M)$.
	
	The proof of \eqref{ineq:ccDualStressAppr} can follow that of \cite[Theorem 2]{CPlechac1997}.
	If $t = 1$, then $s = 0$ and there is nothing to show. Suppose that $t > 1$. The choice $G = \tau$, $H = \phi$, $A = \xi$, $B = \varrho$ in \eqref{ineq:ccDual} leads in the power $1/t$ to
	\begin{align*}
	\|\tau - \phi\|^{r/t}_{L^{r/t}(\Omega)} \leq \cnstS{cnst:cc}^{1/t}\int_{\Omega} (1 + |\xi|^s + |\varrho|^s)^{1/t}(W^*(\phi) - W^*(\tau) - \xi:(\phi - \tau))^{1/t} \d{x}.
	\end{align*}
	Notice from \eqref{def:SubDiff} that $W^*(\phi) - W^*(\tau) - \xi:(\phi - \tau)$ is non-negative a.e~in $\Omega$.
	A H\"older inequality with the exponents $t$ and $t'$ on the right-hand side shows
	\begin{align}
		\begin{split}
			\|\tau - \phi\|^{r}_{L^{r/t}(\Omega)} &\leq \cnstS{cnst:cc}\|(1 + |\xi|^s + |\varrho|^s)^{1/t}\|_{L^{t'}(\Omega)}^t\\
			&\qquad\times \|W^*(\phi) - W^*(\tau) - \xi:(\phi - \tau)\|_{L^1(\Omega)}.
		\end{split}\label{ineq:proofCC}
	\end{align}
	If $1 \leq t'/t$, then $|\bullet|^{t'/t}$ is convex and Jensen's inequality proves that $(1 + |\xi|^s + |\varrho|^s)^{t'/t} \leq 3^{t'/t-1}(1 + |\xi|^{st'/t} + |\varrho|^{st'/t})$. If $t'/t < 1$, an elementary calculation provides $(1 + |\xi|^s + |\varrho|^s)^{t'/t} \leq 1 + |\xi|^{st'/t} + |\varrho|^{st'/t}$.
	Since $st'/t \leq p$ and $0 \leq st'/(pt) \leq 1$ by assumption, Jensen's inequality for the concave function $|\bullet|^{st'/(pt)}$ shows that
	\begin{align*}
	1 + |\xi|^{st'/t} + |\varrho|^{st'/t} \leq 3^{1-st'/(pt)}(1 + |\xi|^{p} + |\varrho|^{p})^{st'/(pt)} \leq 3(1 + |\xi|^p + |\varrho|^p).
	\end{align*}
	Hence, $\|(1 + |\xi|^s + |\varrho|^s)^{1/t}\|_{L^{t'}(\Omega)}^t \leq \max\{3,3^{t/t'}\}(|\Omega| + \|\xi\|^p_{L^p(\Omega)} + \|\varrho\|^p_{L^p(\Omega)})^{t/t'}$. This and \eqref{ineq:proofCC} conclude the proof of \eqref{ineq:ccDualStressAppr}. The proof of \eqref{ineq:ccPrimalStressAppr} is similar, whence omitted.
\end{proof}
\begin{proof}[Proof of \Cref{thm:aprioriGeneral}.a] 
	\emph{Step 1: Comparison with MFEM.} Let $\sigma_\mathrm{M} \in Q(f,\Tcal)$ be the unique solution of the mixed FEM \cite{CGuentherRabus2012,CLiu2015}, that is $\sigma_\mathrm{M}$ maximizes $E^*$ in $Q(f,\Tcal)$ from \Cref{lem:uniquenessDiscrStress}.b. The two-sided growth of $W^*$ in \Cref{lem:GrowthConj}.b shows
	\begin{align*}
	-\cnstS{cnst:growthWStarRight1}\|\sigma_h\|_{L^{p'}(\Omega)}^{p'} - \cnstS{cnst:growthWLeft2}|\Omega| \leq E^*(\sigma_h) \leq E^*(\sigma_\mathrm{M}) \leq -\cnstS{cnst:growthWStarLeft1}\|\sigma_\mathrm{M}\|_{L^{p'}(\Omega)}^{p'} + \cnstS{cnst:growthWRight2}|\Omega|
	\end{align*}
	and $\|\sigma_\mathrm{M}\|_{L^{p'}(\Omega)}^{p'} \leq \cnstS{cnst:growthWStarLeft1}^{-1}\cnstS{cnst:growthWStarRight1}\cnstL{cnst:sigmah}^{p'} + \cnstS{cnst:growthWStarLeft1}^{-1}(\cnstS{cnst:growthWLeft2} + \cnstS{cnst:growthWRight2})|\Omega| \eqqcolon \newcnstL^{p'}\label{cnst:tauh}$.
	\Cref{lem:cc} allows the selection of
	$\varrho \in L^p(\Omega;\M)$ with $\varrho \in \partial W^*(\sigma_\mathrm{M})$ a.e.~in $\Omega$.
	The growth of $\partial W^*$ in \Cref{lem:GrowthConj}.c provides $\|\varrho\|_{L^p(\Omega)}^p \leq \cnstS{cnst:growthDWStar}\cnstL{cnst:tauh}^{p'} + \cnstS{cnst:growthDWStarCnst}|\Omega|$.
	The choice $\tau = \D W(R u_h)$, $\phi = \sigma_\mathrm{M}$, and $\xi = R u_h$ in \eqref{ineq:ccDualStressAppr} proves that $\newcnstL\label{cnst:proofAprioriLeft2} \coloneqq \max\{3,3^{t/t'}\}\cnstS{cnst:cc}\big((1 + \cnstS{cnst:growthDWStarCnst})|\Omega| + \cnstS{cnst:growthDWStar}\cnstL{cnst:tauh}^{p'} + \cnstL{cnst:Ruh}^p\big)^{t/t'}$ satisfies
	\begin{align}
	\begin{split}
	&\cnstL{cnst:proofAprioriLeft2}^{-1}\|\sigma_\mathrm{M} - \D W(R u_h)\|^r_{L^{r/t}(\Omega)}\\
	&\qquad \leq \int_\Omega \big(W^*(\sigma_\mathrm{M}) - W^*(\D W(R u_h)) - R u_h:(\sigma_\mathrm{M} - \D W(R u_h))\big) \d{x}.
	\end{split}\label{ineq:proofAprioriCCStep2}
	\end{align}
	The definition of the gradient reconstruction $R$ in \eqref{def:grRecGeneral} implies
	\begin{align*}
	\int_{\Omega} R u_h:\sigma_\mathrm{M} \d{x} = \int_\Omega f \cdot u_\Tcal \d{x} + \sum_{F \in \Fcal(\Omega)} \int_{F} u_\Fcal \cdot [\sigma_\mathrm{M} \nu_F]_F \d{s}. 
	\end{align*}
	Since $\sigma_\mathrm{M} \in W^{p'}(\div,\Omega;\M)$, the normal jump $[\sigma_\mathrm{M} \nu_F]_F$ across $F$ vanishes a.e.~on $F \in \Fcal(\Omega)$. Hence, $(R u_h,\sigma_\mathrm{M})_{L^2(\Omega)} = (f,u_\Tcal)_{L^2(\Omega)}$. The discrete Euler-Lagrange equations \eqref{eq:dELE} lead to $(R u_h,\sigma_\mathrm{M} - \D W(R u_h))_{L^2(\Omega)} = 0$.
	This and \eqref{ineq:proofAprioriCCStep2} result in
	\begin{align}
	\cnstL{cnst:proofAprioriLeft2}^{-1}\|\sigma_\mathrm{M} - \D W(R u_h)\|^r_{L^{r/t}(\Omega)} \leq \int_\Omega \big(W^*(\sigma_\mathrm{M}) - W^*(\D W(R u_h)) \big) \d{x}.\label{ineq:proofApriori3}
	\end{align}
	The duality $R u_h \in \partial W^*(\D W(R u_h))$ shows $R u_h:\D W(R u_h) = W^*(\D W(R u_h)) + W(R u_h)$ a.e.~in $\Omega$ \cite[Corollary 12.2.2]{Rockafellar1970}. This and the discrete Euler-Lagrange equations \eqref{eq:dELE} imply
	\begin{align*}
	\int_\Omega f\cdot u_\Tcal \d{x} &= \int_\Omega R u_h:\D W(R u_h) \d{x}  = \int_\Omega W^*(\D W(R u_h)) \d{x} + \int_{\Omega} W(R u_h) \d{x}.
	\end{align*}
	This proves $E^*(\D W(R u_h)) = E_h(u_h)$ and \eqref{ineq:proofApriori3} leads to
	\begin{align}
	\cnstL{cnst:proofAprioriLeft2}^{-1}\|\sigma_\mathrm{M} - \D W(R u_h)\|^r_{L^{r/t}(\Omega)} \leq -E^*(\sigma_\mathrm{M}) + E_h(u_h). \label{ineq:PrAPrioriCC2}
	\end{align}
	
	\emph{Step 2: A temporary error estimate.} The choice $\xi = R u_h$, $\varrho = \D u$ in \eqref{ineq:ccPrimalStressAppr} and the bounds $\|\D u\|_{L^p(\Omega)} \leq \cnstL{cnst:Du}$ from \Cref{thm:reviewResults}.c and $\|R u_h\|_{L^p(\Omega)} \leq \cnstL{cnst:Ruh}$ from \Cref{lem:uniquenessDiscrStress}.c show that $\cnstL{cnst:aprioriLeft1} \coloneqq \max\{3,3^{t/t'}\}\big(|\Omega| + \cnstL{cnst:Du}^p + \cnstL{cnst:Ruh}^p\big)^{t/t'}$ satisfies
	\begin{align}
	\begin{split}
		&\cnstL{cnst:aprioriLeft1}^{-1}\|\sigma - \D W(R u_h)\|^r_{L^{r/t}(\Omega)}\\
		&\qquad\qquad\leq \int_\Omega \big(W(\D u) - W(R u_h) - \D W(R u_h):(\D u - R u_h)\big) \d{x}.
	\end{split}\label{ineq:proofApriori1}
	\end{align}
	The definition of the gradient reconstruction $R$ in \eqref{def:grRecGeneral} and the discrete Euler-Lagrange equations \eqref{eq:dELE} prove the $L^2$ orthogonality $\sigma_\mathrm{M} - \D W(R u_h) \perp R V_h$. This and $R \I u = \Pi_{\Sigma(\Tcal)} \D u$ in \Cref{lem:bondednessStabilityGradRec}.d lead to
	\begin{align}
	\begin{split}
		&-\int_\Omega \D W(R u_h):(\D u - R u_h) \d{x}\\
		&\qquad = \int_\Omega (\sigma_\mathrm{M} - \D W(R u_h)): (1 - \Pi_{\Sigma(\Tcal)})\D u \d{x} - \int_\Omega \sigma_\mathrm{M} : (\D u - R u_h) \d{x}.
	\end{split}\label{ineq:proofApriori2}
	\end{align}
	The definition of $R$ in \eqref{def:grRecGeneral} and an integration by parts result in
	\begin{align*}
	\int_\Omega \sigma_\mathrm{M}:(\D u - R u_h) \d{x} = \int_\Omega f \cdot (\Pi_\Tcal^\ell u - u_\Tcal) \d{x}.
	\end{align*}
	The combination of this with \eqref{ineq:proofApriori1}--\eqref{ineq:proofApriori2} provides
	\begin{align*}
		\begin{split}
			\cnstL{cnst:aprioriLeft1}^{-1}\|\sigma - \D W(R u_h)\|^r_{L^{r/t}(\Omega)}&\leq \int_{\Omega} W(\D u) \d{x} - \int_{\Omega} f\cdot \Pi_{\Tcal}^\ell u \d{x} - E_h(u_h)\\
			&\quad\quad +\int_\Omega (\sigma_\mathrm{M} - \D W(R u_h)):(1 - \Pi_{\Sigma(\Tcal)})\D u \d{x}.
		\end{split}
	\end{align*}
	A piecewise application of the Poincar\'e inequality and $\|\D u\|_{L^p(\Omega)} \leq \cnstL{cnst:Du}$ prove
	\begin{align*}
	\int_\Omega (W(\D u) - f\cdot \Pi_{\Tcal}^\ell u) \d{x} &= E(u) + \int_{\Omega} f\cdot (1 - \Pi^\ell_\Tcal) u \d{x}\leq E(u) + \cnstL{cnst:DuCP} \osc_\ell(f,\Tcal)
	\end{align*}
	with $\cnstL{cnst:DuCP} \coloneqq C_P\cnstL{cnst:Du}$.
	Since there is no duality gap $E(u) = E^*(\sigma)$ on the continuous level, the combination of the two previous formulas verifies
	\begin{align}
	\begin{split}
	&\cnstL{cnst:aprioriLeft1}^{-1}\|\sigma - \D W(R u_h)\|^r_{L^{r/t}(\Omega)} \leq E^*(\sigma) - E_h(u_h)\\
	&\qquad\qquad + \cnstL{cnst:DuCP} \osc_\ell(f,\Tcal) + \int_\Omega (\sigma_\mathrm{M} - \D W(R u_h)):(1 - \Pi_{\Sigma(\Tcal)})\D u \d{x}.
	\end{split}\label{ineq:PrAPrioriCC1}
	\end{align}
	
	\emph{Step 3: The final error estimate.} The sum of \eqref{ineq:PrAPrioriCC1} and \eqref{ineq:PrAPrioriCC2}, the Cauchy, H\"older, and Young inequality prove that $\cnstL{cnst:aprioriRight} \coloneqq \cnstL{cnst:proofAprioriLeft2}^{r'-1}/r'$ satisfies
	\begin{align}
	\begin{split}
	\cnstL{cnst:aprioriLeft1}^{-1}&\|\sigma - \D W(R u_h)\|^r_{L^{r/t}(\Omega)} + (r'\cnstL{cnst:proofAprioriLeft2})^{-1}\|\sigma_\mathrm{M} - \D W(R u_h)\|^r_{L^{r/t}(\Omega)}\\
	&\leq E^*(\sigma) - E^*(\sigma_\mathrm{M}) + \cnstL{cnst:DuCP} \osc_\ell(f,\Tcal) + \cnstL{cnst:aprioriRight} \|(1 - \Pi_{\Sigma(\Tcal)}) \D u\|^{r'}_{L^{r/(r-t)}(\Omega)}.
	\end{split}\label{ineq:proofApriori4}
	\end{align}
	The triangle and Jensen inequality for the convex function $|\bullet|^r$ imply
	\begin{align*}
		\|\sigma - \sigma_h\|^r_{L^{r/t}(\Omega)} \leq 2^{r-1}\big(\|\sigma - \D W(R u_h)\|^r_{L^{r/t}(\Omega)} + \|\sigma_h - \D W(R u_h)\|^r_{L^{r/t}(\Omega)}\big).
	\end{align*}
	The triangle inequality and the stability of the $L^2$ projection $\Pi_{\Sigma(\Tcal)}$ in the $L^{r/t}$ norm \cite[Lemma 3.2]{DiPietroDroniou2017} with the operator norm $\|\Pi_{\Sigma(\Tcal)}\|_{\mathcal{L}(L^{r/t}(\Omega;\M))}$ show that
	\begin{align*}
		 \|\sigma_h - \D W(R u_h)\|_{L^{r/t}(\Omega)} &\leq  \|\sigma_h - \sigma_\mathrm{M}\|_{L^{r/t}(\Omega)} +  \|\sigma_\mathrm{M} - \D W(R u_h)\|_{L^{r/t}(\Omega)}\\
%		 &\leq \|\Pi_{\Sigma(\Tcal)}(\sigma_\mathrm{M} - \D W(R u_h))\|_{L^{r/t}(\Omega)} + \|\sigma_\mathrm{M} - \D W(R u_h)\|_{L^{r/t}(\Omega)}\\
		 &\leq \big(1 + \|\Pi_{\Sigma(\Tcal)}\|_{\mathcal{L}(L^{r/t}(\Omega;\M))}\big)\|\sigma_\mathrm{M} - \D W(R u_h)\|_{L^{r/t}(\Omega)}.
	\end{align*}
	The combination of this with \eqref{ineq:proofApriori4} concludes the proof of (a) with the constant $\cnstL{cnst:aprioriLeft2} \coloneqq 2^{r-1}\max\big\{\cnstL{cnst:aprioriLeft1},r'\cnstL{cnst:proofAprioriLeft2}(1 + \|\Pi_{\Sigma(\Tcal)}\|_{\mathcal{L}(L^{r/t}(\Omega;\M))})^r\big\}$.
	
	\emph{Proof of \Cref{thm:aprioriGeneral}.b.} 
	Recall the maximizer $\sigma_\mathrm{M} \in Q(f,\Tcal)$ of $E^*$ in $Q(f,\Tcal)$ from the proof of (a) and $0 \leq -E^*(\sigma_\mathrm{M}) + E_h(u_h)$ from \eqref{ineq:PrAPrioriCC2}. Thus,
	\begin{align}
		E^*(\sigma) - E_h(u_h) \leq E^*(\sigma) - E^*(\sigma_\mathrm{M}).\label{ineq:proofAprioriEnergy1}
	\end{align}
	A weighted Young inequality in \eqref{ineq:PrAPrioriCC1} leads to
	\begin{align*}
	E_h(u_h) - E^*(\sigma) \leq \cnstL{cnst:DuCP} \osc_\ell(f,\Tcal) &+ \frac{1}{r\cnstL{cnst:proofAprioriLeft2}}\|\sigma_\mathrm{M} - \D W(R u_h)\|_{L^{r/t}(\Omega)}^{r}\\
	&+ \cnstL{cnst:aprioriRight} \|(1 - \Pi_{\Sigma(\Tcal)}) \D u\|^{r'}_{L^{r/(r-t)}(\Omega)}.
	\end{align*}
	The combination of this with \eqref{ineq:proofApriori4}--\eqref{ineq:proofAprioriEnergy1} concludes the proof of (b).
\end{proof}
%\begin{remark}[Fortin interpolation of $\sigma$]
%	If $\div \sigma = -f\in L^2(\Omega;\R^m)$ and $\sigma \in L^{q^*}(\Omega;\M)$ for some $q^* > 2$, then the Fortin interpolation $\I_F \sigma$ \cite[Section 2.5.1]{BoffiBrezziFortin2013} exists and \Cref{thm:aprioriGeneral} holds for $\tau_h$ replaced by $\I_F \sigma$. \Cref{thm:reviewResults}.e guarantees $\sigma \in L^{q*}(\Omega;\M)$ for the optimal design problem and the p-Laplace with $1 < p < 4$ and $f \in L^2(\Omega;\R^m)$ in two space dimensions. For the two-well problem or the p-Laplace with $p \geq 4$, the existence of a Fortin interpolation $I_F \sigma$ is \emph{not} guaranteed a~priori in the absence of additional conditions.
%\end{remark}
For smooth functions $\sigma$, $u$, and $f$, the subsequent corollary implies the rate $\|\sigma - \sigma_h\|_{L^{p'}(\Omega)} \lesssim h_{\max}^{(k+1)/r}$ with maximal mesh-size $h_{\max}$ of $\Tcal$.
\begin{corollar}\label{cor:convergenceRate}
	Consider the examples of \Cref{sec:examples} and adapt the notation from \Cref{thm:apriori}.
	If $f \in W^{\ell+1,p'}(\Tcal;\R^m)$, $u \in W^{k+2,p}(\Omega;\R^m)$ for some minimizer $u$ of $E$ in $V$, and $\sigma \coloneqq \D W(\D u) \in W^{k+1,p'}(\Omega;\M)$, then
	\begin{align*}
		&\|\sigma-\sigma_h\|_{L^{p'}(\Omega)}^r + \|\sigma - \D W(R u_h)\|^r_{L^{p'}(\Omega)} + |E(u) - E_h(u_h)|\\
		&~~\lesssim h^{2(\ell+1)}_{\max}\|f\|_{W^{\ell+1,p'}(\Omega)}\|u\|_{W^{\ell+1,p}(\Omega)} + h^{k+1}_{\max} \|\sigma\|_{W^{k+1,p'}(\Omega)} + h^{(k+1) r'}_{\max}\|u\|_{W^{k+2,p}(\Omega)}^{r'}.
	\end{align*}
\end{corollar}
\begin{proof}
	Recall Step 2 in the proof of \Cref{thm:aprioriGeneral} and notice that, for smooth functions $f \in W^{\ell+1,p}(\Omega;\R^m)$, the orthogonality and the best approximation of the piecewise $L^2$ projection imply
	\begin{align}
		\int_{\Omega} f \cdot (1 - \Pi_{\Tcal}^\ell) u \d{x} \lesssim h^{2(\ell+1)}_{\max}\|f\|_{W^{\ell+1,p'}(\Omega)}\|u\|_{W^{\ell+1,p}(\Omega)}.\label{ineq:higherOscillation}
	\end{align}
	This replaces the data oscillation in the a priori estimate from \Cref{thm:aposterioriGeneral}. The approximation property of $L^2$ projections onto piecewise polynomials \cite[Lemma 3.4]{DiPietroDroniou2017} leads to $\|(1 - \Pi_{\Sigma(\Tcal)})\D u\|_{L^p(\Omega)} \lesssim h^{k+1}_{\max} \|u\|_{W^{k+2,p}(\Omega)}$.
	Let $\tau_h \in \arg\min_{\varphi \in Q(f,\Tcal)}\|\sigma - \varphi\|_{L^{p'}(\Omega)}$ and let $\xi \in L^p(\Omega;\M)$ be a measurable selection of $\partial W^*(\tau_h)$ from \Cref{lem:cc}, $\xi \in \partial W^*(\tau_h)$ a.e.~in $\Omega$.
	Since $\|\sigma - \tau_h\|_{L^{p'}(\Omega)} \leq \|\sigma - \sigma_h\|_{L^{p'}(\Omega)} \leq \|\sigma\|_{L^{p'}(\Omega)} + \|\sigma_h\|_{L^{p'}(\Omega)}$, the reverse triangle inequality shows that
	\begin{align*}
		\|\tau_h\|_{L^{p'}(\Omega)} \leq 2\|\sigma\|_{L^{p'}(\Omega)} + \|\sigma_h\|_{L^{p'}(\Omega)} \leq 2\cnstL{cnst:sigma} + \cnstL{cnst:sigmah}
	\end{align*}
	with the constants $\cnstL{cnst:sigma}$ from \Cref{thm:reviewResults}.c and $\cnstL{cnst:sigmah}$ from \Cref{lem:uniquenessDiscrStress}.c.
	The growth of $\partial W^*$ in \Cref{lem:GrowthConj}.c proves $\|\xi\|_{L^p(\Omega)}^p \leq \cnstS{cnst:growthDWStar}(2\cnstL{cnst:sigma} + \cnstL{cnst:sigmah})^{p'} + \cnstS{cnst:growthDWStarCnst}|\Omega| \eqqcolon \newcnstL\label{cnst:xi}$.
	The definition of the subdifferential $\partial W^*$ in \eqref{def:SubDiff} leads to
	\begin{align}
		\begin{split}
			E^*(\sigma) - E^*(\tau_h) &= \int_{\Omega} (W^*(\tau_h) - W^*(\sigma)) \d{x}\\
			&\leq -\int_{\Omega} \xi:(\sigma - \tau_h) \d{x} \leq \cnstL{cnst:xi}\|\sigma - \tau_h\|_{L^{p'}(\Omega)}.
		\end{split}
		\label{ineq:proof-convergence-rates}
	\end{align}
	The approximation property of the Fortin interpolation $I_F \sigma \in Q(f,\Tcal)$ \cite[Proposition 2.5.4]{BoffiBrezziFortin2013} to $\sigma \in H^{k+1}(\Omega;\M)$ is well-established for $p = 2$ \cite[Proposition 2.5.4]{BoffiBrezziFortin2013}. The same arguments
	lead to $\|\sigma - \tau_h\|_{L^{p'}(\Omega)} \leq \|\sigma - I_F \sigma\|_{L^{p'}(\Omega)} \lesssim h^{k+1}_{\max}\|\sigma\|_{W^{k+1,p'}(\Omega)}$ for $\sigma \in W^{k+1,p'}(\Omega;\M)$. The a priori estimate for $|E(u) - E_h(u_h)|$ in \Cref{thm:aprioriGeneral}.b and previous arguments conclude the proof.
\end{proof}
\begin{remark}[$p$-Laplace]\label{rem:pLaplace}
	Additional control over the primal variable in the $p$-Laplace problem of \Cref{ex:pLaplace} improves the results in \Cref{cor:convergenceRate} as outlined below. For the sake of simplicity, let $k = \ell$ in \eqref{def:discretePairing1}.
	If $1 < p < 2$, the bound $|a - b|^2 \lesssim (|a|^{2-p} + |b|^{2-p})(\D W(a) - \D W(b)) \cdot (a - b)$ in \cite[Lemma 5.2]{GlowinskiMarrocco1975} for any $a,b \in \R^n$ and the arguments in the proof of \Cref{lem:cc} verify for all $\xi, \varrho \in L^p(\Omega)$ that
	\begin{align}
	\|\xi - \varrho\|^2_{L^p(\Omega)} \lesssim (\|\xi\|^p_{L^p(\Omega)} + \|\varrho\|_{L^p(\Omega)}^p)^{\frac{2-p}{p}}\int_\Omega (\D W(\xi) - \D W(\varrho))\cdot(\xi - \varrho) \d{x}.
	\label{ineq:cc-p-Laplace-case-1}
	\end{align}
	Recall $\tau_h \in Q(f,\Tcal)$ and $\xi = \D W^*(\tau_h)$ from the proof of \Cref{cor:convergenceRate}. The choice $\varrho \coloneqq \D u$ in \eqref{ineq:cc-p-Laplace-case-1} proves $\|\D u - \xi\|_{L^p(\Omega)} \lesssim \|\sigma - \tau_h\|_{L^{p'}(\Omega)}$. This, an integration by parts, and a piecewise application of the Poincar\'e inequality in \eqref{ineq:proof-convergence-rates} lead to
	\begin{align*}
	E^*(\sigma) - E^*(\tau_h) &\leq \int_{\Omega} (\D u - \xi) \cdot (\sigma - \tau_h) \d{x} - \int_\Omega \D u \cdot (\sigma - \tau_h) \d{x}\\
	&\lesssim \|\sigma - \tau_h\|_{L^{p'}(\Omega)}^2 + \osc_k(f,\Tcal).
	\end{align*}
	This and the rates in the proof of \Cref{cor:convergenceRate} confirm $\|\sigma - \sigma_h\|_{L^{p'}(\Omega)} + \|\sigma - \D W(R u_h)\|_{L^{p'}(\Omega)} \lesssim h_{\max}^{(k+1)(p-1)}$. Moreover, the convexity control \eqref{ineq:cc-p-Laplace-case-1} and \Cref{rem:monotonicity} prove 
	\begin{align}
	\begin{split}
	\|\D u - R u_h\|_{L^p(\Omega)}^2 &\lesssim \int_\Omega (W(\D u) - W(R u_h) - \D W(R u_h) \cdot (\D u - R u_h)) \d{x},\\
	\|R u_h - \varrho\|_{L^p(\Omega)}^2 &\lesssim \int_\Omega (W^*(\sigma_\mathrm{M}) - W^*(\D W(R u_h)) - R u_h \cdot (\sigma_\mathrm{M} - \sigma_h)) \d{x}
	\end{split}
	\label{ineq:proof-convergence-rates-cc-primal}
	\end{align}
	with $\sigma_\mathrm{M} = \arg\max E^*(Q(f,\Tcal))$ and $\varrho = \D W^*(\sigma_\mathrm{M})$ from Step 1 of the proof of \Cref{thm:aprioriGeneral}. The arguments from the proof of \Cref{thm:aprioriGeneral}.a apply to the right-hand sides of \eqref{ineq:proof-convergence-rates-cc-primal} and imply 
	\begin{align}
	\begin{split}
	&\|\D u - R u_h\|_{L^p(\Omega)}^2 + \|R u_h - \varrho\|_{L^p(\Omega)}^2 \lesssim E^*(\sigma) - E^*(\sigma_\mathrm{M})\\
	&\qquad\qquad + \cnstL{cnst:DuCP}\osc_k(f,\Tcal) + \|\sigma_\mathrm{M} - \D W(R u_h)\|_{L^{p'}(\Omega)} \|(1 - \Pi_{\Sigma(\Tcal)}) \D u\|_{L^p(\Omega)}.
	\end{split}
	\label{ineq:proof-a-priori-p-Laplace}
	\end{align}
	The convexity control \eqref{ineq:ccMonotonicty} shows $\|\sigma_\mathrm{M} - \D W(R u_h)\|_{L^p(\Omega)} \lesssim \|R u_h - \varrho\|_{L^p(\Omega)}^{p-1}$. This, a Young inequality with exponents $2/(p-1)$ and $2/(3-p)$ on the right-hand side of \eqref{ineq:proof-a-priori-p-Laplace}, and the arguments from the proof of \Cref{cor:convergenceRate} verify $\|\D u - R u_h\|_{L^p(\Omega)} \lesssim h_{\max}^{(k+1)/(3-p)}$. This improves the existing rate $h_{\max}^{(k+1)(p-1)}$ in \cite{DiPietroDroniou2017-II, DiPietroDroniouManzini2018}. For $2 \leq p < \infty$, the arguments of this paper lead to
	$\|\sigma - \sigma_h\|_{L^{p'}(\Omega)} + \|\sigma - \D W(R u_h)\|_{L^{p'}(\Omega)} \lesssim h_{\max}^{(k+1)p'/2}$ and $\|\D u - R u_h\|_{L^p(\Omega)} \lesssim h_{\max}^{(k+1)/(p-1)}$. This confirm the results in \cite{DiPietroDroniou2017-II, DiPietroDroniouManzini2018}.
\end{remark}
\begin{remark}[reduced convergence rates]
	Notice that the optimal stress approximation of $\Sigma(\Tcal)$ in $L^{p'}(\Omega;\M)$ is of order $k+1$, but \Cref{cor:convergenceRate} solely guarantees a convergence rate of order $(k+1)/r$. This reduction is also observed in \cite[Theorem 5.2]{CGuentherRabus2012} for a lowest-order Raviart-Thomas discretization of the optimal design problem.
\end{remark}
%\begin{remark}
%	Suppose that $\varrho \in L^p(\Omega;\R^{m \times n})$ with $\varrho \in \partial W^*(I_F \sigma)$ a.e.~in $\Omega$, then
%	\begin{align*}
%	E^*(\sigma) - E^*(I_F \sigma) \leq -\int_\Omega \xi:(\sigma - I_F \sigma) \d{x} \leq \|\xi\|_{L^{p}(\Omega)} \|(1 - I_F) \sigma\|_{L^{p'}(\Omega)}.
%	\end{align*}
%	If $\sigma$ is sufficiently smooth, e.g.~$\sigma \in W^{\ell+1,p'}(\Omega;\M)$, then $\|(1 - I_F) \sigma\|_{L^{p'}(\Omega)} \lesssim h^{\ell+1}\|\sigma\|_{W^{\ell+1,p'}(\Omega)}$. Note that $\|\sigma - \sigma_h\|^r_{L^{r/t}(\Omega)}$ converges with order $h^{r(\ell+1)}$ for elliptic problems, however, this cannot be expected here due to the degeneracy of $W$. Some explicit computations in \cite{CGuentherRabus2012} shows that the convergence rate depends on the (unknown) size of the microstructure.
%\end{remark}
\subsection{A~posteriori error analysis}\label{sec:aposteriori}
Let $u$ minimize $E$ in $V$ and let $v \in V$ be arbitrary. The choice $\xi = \D u$, $\varrho = \D v$ in \eqref{ineq:ccPrimalStressAppr}, and the Euler-Lagrange equations \eqref{eq:ELE} lead to the estimate
\begin{align*}
\|\sigma - \D W(\D v)\|^r_{L^{p'}(\Omega)} \leq \max\{3,3^{t/t'}\}\cnstS{cnst:cc}\big(|\Omega| + \cnstL{cnst:Du}^p + \|\D v\|^p_{L^p(\Omega)}\big)^{t/t'}\big(E(v) - E(u)\big).
\end{align*}
Provided $E(u) = \min E(V)$ has a known lower energy bound, this provides an a~posteriori stress error estimate in a conforming discretization for the approximation $v \in V$ (even for inexact solve) and its (computable) energy $E(v)$. This technique is employed e.g.~in \cite[Section 10.2.5]{Bartels2015}. Nonconforming, mixed, and HHO discretizations can be utilized for lower energy bounds (LEBs).
\begin{theorem}[a~posteriori]\label{thm:aposterioriGeneral}
	Let $u_h$ minimize $E_h$ in $V_h$. The unique discrete stress $\sigma_h \coloneqq \Pi_{\Sigma(\Tcal)} \D W(R u_h)$ and any $v \in W^{1,r/(r-t)}_0(\Omega;\R^m)$ satisfy
	\begin{enumerate}[wide]
		\item[(a)] (LEB) $\cnstL{cnst:LEB}^{-1}\|\sigma - \sigma_h\|^r_{L^{r/t}(\Omega)} + E^*(\sigma_h) - \cnstL{cnst:DuCP} \osc_\ell(f,\Tcal) \leq \min E(V)$;
		\item[(b)] $\begin{aligned}[t]
		&\cnstL{cnst:aposterioriLeft1}^{-1}\|\sigma - \sigma_h\|^r_{L^{r/t}(\Omega)} + \cnstL{cnst:aprioriLeft1}^{-1}\|\sigma - \D W(R u_h)\|^r_{L^{r/t}(\Omega)} \leq E_{h}(u_h) - E^*(\sigma_h)\\
		&\qquad\qquad + \cnstL{cnst:DuCP} \textup{osc}_\ell(f,\mathcal{T}) + \cnstL{cnst:aposterioriRight} \|R u_h - \D v\|_{L^{r/(r-t)}(\Omega)}^{r'} - \int_\Omega f\cdot(1-\Pi^\ell_\Tcal)v \d{x};
		\end{aligned}$
		\item[(c)] $\begin{aligned}[t]
			|E(u) - E_h(u_h)| \leq \max\Big\{\cnstL{cnst:Du}\|\sigma_h - \D W(R u_h)\|_{L^{p'}(\Omega)} + \cnstL{cnst:DuCP}\osc_\ell(f,\Tcal),
		\end{aligned}\\
		\begin{aligned}
			\quad\cnstL{cnst:aposterioriLeft1}^{-1}\|\sigma-\sigma_h\|^{r}_{L^{r/t}(\Omega)} + (r'/r)^{r'-1}\cnstL{cnst:aposterioriRight}\|R u_h - \D v\|^{r'}_{L^{r/(r-t)}(\Omega)} -\int_{\Omega} f\cdot(1-\Pi_{\Tcal}^\ell) v \d{x}\Big\}.
		\end{aligned}$
	\end{enumerate}
\end{theorem}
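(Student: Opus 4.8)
The plan is to prove the assertions in the order (a), (b), (c), reusing the algebraic identities from the proof of \Cref{thm:aprioriGeneral} but systematically replacing its non-computable ingredients---the mixed-FEM maximiser $\sigma_\mathrm{M}=\arg\max E^*(Q(f,\Tcal))$, the value $E^*(\sigma)=\min E(V)$, and $\|(1-\Pi_{\Sigma(\Tcal)})\D u\|$---by the discrete stress $\sigma_h$, the energy $E(v)$ of the test function, the discrete primal--dual gap $E_h(u_h)-E^*(\sigma_h)$, and the computable distance $\|R u_h-\D v\|$. The recurring tools are the two orthogonalities $\sigma_h-\D W(R u_h)=-(1-\Pi_{\Sigma(\Tcal)})\D W(R u_h)\perp\Sigma(\Tcal)$ and $(1-\Pi_{\Sigma(\Tcal)})R u_h=0$, the Euler--Lagrange identities \eqref{eq:ELE} and \eqref{eq:dELE}, an integration by parts exploiting $\div\sigma_h=-\Pi_\Tcal^\ell f$ and the vanishing normal jumps of $\sigma_h$ from \Cref{lem:uniquenessDiscrStress}.b, and the identity $E^*(\D W(R u_h))=E_h(u_h)$ established in the proof of \Cref{thm:aprioriGeneral}.

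For the lower energy bound (a), I would start from the Fenchel--Young inequality $\sigma_h:\D u\le W(\D u)+W^*(\sigma_h)$ a.e.\ in $\Omega$, integrate it over $\Omega$, and integrate by parts; since $\div\sigma_h=-\Pi_\Tcal^\ell f$ the left-hand side equals $\int_\Omega\Pi_\Tcal^\ell f\cdot u\d{x}$, and a piecewise Poincar\'e inequality together with $\|\D u\|_{L^p(\Omega)}\le\cnstL{cnst:Du}$ bounds $\int_\Omega(1-\Pi_\Tcal^\ell)f\cdot u\d{x}=\int_\Omega(1-\Pi_\Tcal^\ell)f\cdot(1-\Pi_\Tcal^\ell)u\d{x}$ by $\cnstL{cnst:DuCP}\osc_\ell(f,\Tcal)$; this already gives the weak form $E^*(\sigma_h)-\cnstL{cnst:DuCP}\osc_\ell(f,\Tcal)\le\min E(V)$. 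To insert the stress term I would apply the dual convexity control \eqref{ineq:ccDual} (equivalently \Cref{lem:cc}) with $G=\sigma_h$, $H=\sigma$, $A=\D u\in\partial W^*(\sigma)$, and a measurable selection $B\in\partial W^*(\sigma_h)$: here $\int_\Omega\D u:(\sigma-\sigma_h)\d{x}=\int_\Omega(1-\Pi_\Tcal^\ell)f\cdot u\d{x}$ again by \eqref{eq:ELE} and integration by parts, and the factor $1+|\D u|^s+|B|^s$ is controlled in $L^{t'}(\Omega)$ exactly as in the proof of \Cref{lem:cc} via the growth of $\partial W^*$ in \Cref{lem:GrowthConj}.c and $\|\sigma_h\|_{L^{p'}(\Omega)}\le\cnstL{cnst:sigmah}$; this defines $\cnstL{cnst:LEB}$ and concludes (a).

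For (b) I would proceed in three steps. (1) A lower bound on the discrete gap: \eqref{ineq:ccDual} with $G=\sigma_h$, $H=\D W(R u_h)$, and $A=R u_h\in\partial W^*(\D W(R u_h))$, the orthogonality $\int_\Omega R u_h:(\sigma_h-\D W(R u_h))\d{x}=0$, and $E^*(\D W(R u_h))=E_h(u_h)$ give $\|\sigma_h-\D W(R u_h)\|^r_{L^{r/t}(\Omega)}\lesssim E_h(u_h)-E^*(\sigma_h)$. (2) The bound on $\|\sigma-\D W(R u_h)\|$: apply the primal convexity control \eqref{ineq:ccPrimalStressAppr} with $\xi=R u_h$, $\varrho=\D u$ (this fixes the constant $\cnstL{cnst:aprioriLeft1}$) and rewrite the right-hand side exactly as in Step~2 of the proof of \Cref{thm:aprioriGeneral}.a but with $\sigma_\mathrm{M}$ replaced by $\sigma_h$, which is legitimate because the only properties used there are $\sigma_h-\D W(R u_h)\perp RV_h$, $\int_\Omega\sigma_h:(1-\Pi_{\Sigma(\Tcal)})\D u\d{x}=0$, and $\int_\Omega\sigma_h:(\D u-R u_h)\d{x}=\int_\Omega f\cdot(\Pi_\Tcal^\ell u-u_\Tcal)\d{x}$; this yields $\cnstL{cnst:aprioriLeft1}^{-1}\|\sigma-\D W(R u_h)\|^r_{L^{r/t}(\Omega)}\le E^*(\sigma)-E_h(u_h)+\cnstL{cnst:DuCP}\osc_\ell(f,\Tcal)+\int_\Omega(\sigma_h-\D W(R u_h)):(1-\Pi_{\Sigma(\Tcal)})\D u\d{x}$. (3) Making the right-hand side computable: bound $E^*(\sigma)=\min E(V)\le E(v)$ and expand $E(v)-E_h(u_h)$ by the Fenchel-type identity, which isolates the signed term $-\int_\Omega f\cdot(1-\Pi_\Tcal^\ell)v\d{x}$, a nonnegative convexity remainder, and a term $\int_\Omega(\D W(R u_h)-\sigma_h):\D v\d{x}$; in the latter and in $\int_\Omega(\sigma_h-\D W(R u_h)):(1-\Pi_{\Sigma(\Tcal)})\D u\d{x}$ I would insert $\D v$ via $(1-\Pi_{\Sigma(\Tcal)})\D u=(1-\Pi_{\Sigma(\Tcal)})(\D u-\D v)+(1-\Pi_{\Sigma(\Tcal)})(\D v-R u_h)$ and $\Pi_{\Sigma(\Tcal)}R u_h=R u_h$, use a H\"older inequality in the conjugate exponents $r/t$ and $r/(r-t)$ together with the two-sided growth of $W$, the growth of $\D W$ in \Cref{lem:GrowthConj}.a, and a weighted Young inequality, so that these terms are bounded by $\cnstL{cnst:aposterioriRight}\|R u_h-\D v\|^{r'}_{L^{r/(r-t)}(\Omega)}$ plus a multiple of $\|\sigma_h-\D W(R u_h)\|^r_{L^{r/t}(\Omega)}$ that is absorbed by Step~(1). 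Finally the triangle inequality $\|\sigma-\sigma_h\|\le\|\sigma-\D W(R u_h)\|+\|\sigma_h-\D W(R u_h)\|$, Jensen's inequality for $|\bullet|^r$, and the $L^{r/t}$-stability of $\Pi_{\Sigma(\Tcal)}$ combine the two stress estimates into the stated left-hand side and define $\cnstL{cnst:aposterioriLeft1}$.

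For (c), the direction $E_h(u_h)-E(u)$ follows by integrating $\D W(R u_h):\D u\le W(\D u)+W^*(\D W(R u_h))$ and using $E^*(\D W(R u_h))=E_h(u_h)$ and \eqref{eq:ELE} to obtain $E_h(u_h)-E(u)\le\int_\Omega(1-\Pi_\Tcal^\ell)f\cdot u\d{x}+\int_\Omega(\sigma_h-\D W(R u_h)):(1-\Pi_{\Sigma(\Tcal)})\D u\d{x}$; the first summand is $\le\cnstL{cnst:DuCP}\osc_\ell(f,\Tcal)$ and the second is $\le\cnstL{cnst:Du}\|\sigma_h-\D W(R u_h)\|_{L^{p'}(\Omega)}$ because $\|(1-\Pi_{\Sigma(\Tcal)})\D u\|_{L^p(\Omega)}\le\|\D u\|_{L^p(\Omega)}\le\cnstL{cnst:Du}$ up to the $L^p$-stability of $\Pi_{\Sigma(\Tcal)}$, and this is the first entry of the maximum. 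The direction $E(u)-E_h(u_h)$ follows from (b) fed into the same weighted-Young argument used in the proof of \Cref{thm:aprioriGeneral}.b, now applied to the a posteriori right-hand side, which gives the second entry; the larger of the two bounds is the assertion. The main obstacle is Step~(3) of (b): the a priori argument leaves the non-computable quantity $\|(1-\Pi_{\Sigma(\Tcal)})\D u\|$, and the construction only closes because this quantity occurs exclusively paired with $\sigma_h-\D W(R u_h)$---whose norm is controlled by the discrete gap---so that inserting $\D v$ and using $(1-\Pi_{\Sigma(\Tcal)})R u_h=0$ splits off the computable $\|R u_h-\D v\|$ while the $\D v$-independent remainder is absorbed; one must verify carefully that the resulting H\"older and Young bookkeeping produces precisely the exponent $r'$ on $\|R u_h-\D v\|$ and leaves no mesh-independent additive constant, so that the estimate is genuinely reliable and asymptotically exact.
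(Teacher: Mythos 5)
Your part (a) is essentially the paper's argument: the choice $\tau=\sigma$, $\phi=\sigma_h$, $\xi=\D u$ in \eqref{ineq:ccDualStressAppr}, the identity $\int_\Omega \D u:(\sigma-\sigma_h)\d{x}=\int_\Omega(1-\Pi_\Tcal^\ell)f\cdot u\d{x}$, the piecewise Poincar\'e inequality, and $E^*(\sigma)=\min E(V)$ give exactly \eqref{ineq:aposterioriUnstabProofCC1}. Likewise your first half of (c), based on Fenchel--Young for $\D W(Ru_h):\D u$, $E^*(\D W(Ru_h))=E_h(u_h)$, and the orthogonality $\sigma_h-\D W(Ru_h)\perp\Sigma(\Tcal)$, reproduces \eqref{ineq:proofAposterioriEnergyStep1}. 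The problem is part (b). Your Steps (1) and (2) are individually correct, but Step (3) does not close. After replacing $\sigma_\mathrm{M}$ by $\sigma_h$ in the a~priori Step~2 you are left with $E^*(\sigma)-E_h(u_h)$ plus the pairing $\int_\Omega(\sigma_h-\D W(Ru_h)):(1-\Pi_{\Sigma(\Tcal)})\D u\d{x}$. Your splitting $(1-\Pi_{\Sigma(\Tcal)})\D u=(1-\Pi_{\Sigma(\Tcal)})(\D u-\D v)+(1-\Pi_{\Sigma(\Tcal)})(\D v-Ru_h)$ only disposes of the second piece; the first piece pairs $\sigma_h-\D W(Ru_h)$ with the \emph{non-computable} quantity $\D u-\D v$, and it neither cancels against the term $\int_\Omega(\D W(Ru_h)-\sigma_h):\D v\d{x}$ from your expansion of $E(v)-E_h(u_h)$ (the two add up rather than cancel) nor can it be bounded by $\cnstL{cnst:aposterioriRight}\|Ru_h-\D v\|^{r'}_{L^{r/(r-t)}(\Omega)}$ or by an absorbable multiple of $\|\sigma_h-\D W(Ru_h)\|^r_{L^{r/t}(\Omega)}$. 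Moreover, the bound $E^*(\sigma)\le E(v)$ forces you to control the nonnegative convexity remainder $\int_\Omega\big(W(\D v)-W(Ru_h)-\D W(Ru_h):(\D v-Ru_h)\big)\d{x}$; with only convexity, the two-sided $p$-growth, and \Cref{lem:GrowthConj}.a this remainder is bounded at best by $(1+\|\D v\|^{p-1}_{L^p(\Omega)}+\|Ru_h\|^{p-1}_{L^p(\Omega)})\|Ru_h-\D v\|_{L^p(\Omega)}$, i.e.\ with the wrong power and with a constant depending on $v$, whereas the theorem asserts fixed constants for arbitrary $v\in W^{1,r/(r-t)}_0(\Omega;\R^m)$. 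So the ``H\"older and Young bookkeeping'' you defer to cannot produce the claimed estimate.

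The paper's proof of (b) avoids the primal side entirely and never introduces $E(v)$. It applies the \emph{dual} convexity control \eqref{ineq:ccDualStressAppr} with $\tau=\D W(Ru_h)$, $\phi=\sigma$, $\xi=Ru_h$, $\varrho=\D u$, uses $\sigma_h-\D W(Ru_h)\perp\Sigma(\Tcal)$ to replace $\D W(Ru_h)$ by $\sigma_h$ in the bilinear term, and splits $-\int_\Omega Ru_h:(\sigma-\sigma_h)\d{x}=-\int_\Omega(Ru_h-\D v):(\sigma-\sigma_h)\d{x}-\int_\Omega \D v:(\sigma-\sigma_h)\d{x}$, where the last integral equals $-\int_\Omega f\cdot(1-\Pi_\Tcal^\ell)v\d{x}$ by integration by parts; this is \eqref{ineq:aposterioriUnstabProofCC2}. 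The decisive step you are missing is then to \emph{add} this inequality to the LEB estimate \eqref{ineq:aposterioriUnstabProofCC1} from (a): the unknown value $E(u)$ cancels, producing $E_h(u_h)-E^*(\sigma_h)$, and the weighted Young inequality with $\varepsilon=\cnstL{cnst:LEB}^{-1}$ lets the term $\tfrac{\varepsilon}{r}\|\sigma-\sigma_h\|^r_{L^{r/t}(\Omega)}$ be absorbed into $\cnstL{cnst:LEB}^{-1}\|\sigma-\sigma_h\|^r_{L^{r/t}(\Omega)}$ coming from (a), which is precisely how $\cnstL{cnst:aposterioriLeft1}=r'\cnstL{cnst:LEB}$ and $\cnstL{cnst:aposterioriRight}=\cnstL{cnst:LEB}^{r'-1}/r'$ arise. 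In this construction the non-computable error is always paired as $(Ru_h-\D v):(\sigma-\sigma_h)$, never as $(\D u-\D v)$ against the discrete gap, and no Bregman remainder of $W$ at $\D v$ appears. Your second half of (c) inherits the same defect, since the paper obtains it from the intermediate inequality \eqref{ineq:aposterioriUnstabProofCC2} (dropping its nonnegative left-hand side) rather than from (b) as a black box.
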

Before the remaining parts of this subsection focus on the proof of \Cref{thm:aposterioriGeneral}, notice that \Cref{thm:aposterioriGeneral} implies \Cref{thm:aposteriori}.
\begin{proof}[Proof of \Cref{thm:aposteriori}]
	The choice $t = 1+s/p$ in \Cref{thm:aposterioriGeneral} for the examples from \Cref{sec:examples} leads to $r/t = p'$ and $r/(r-t) = p$ and proves \Cref{thm:aposteriori}.
\end{proof}
\begin{proof}[Proof of \Cref{thm:aposterioriGeneral}.a]
	\emph{(LEB)} Let $\varrho \in L^p(\Omega;\M)$ be a measurable selection of $\partial W^*(\sigma_{h})$ with $\varrho \in \partial W^*(\sigma_h)$ a.e.~in $\Omega$ from \Cref{lem:cc}. The growth of $\partial W^*$ in \Cref{lem:GrowthConj}.c and $\|\sigma_h\|_{L^{p'}(\Omega)} \leq \cnstL{cnst:sigmah}$ from \Cref{lem:uniquenessDiscrStress}.c lead to $\|\varrho\|_{L^p(\Omega)}^p \leq \cnstS{cnst:growthDWStar}\cnstL{cnst:sigmah}^{p'} + \cnstS{cnst:growthDWStarCnst}|\Omega|$.
	The choice $\tau = \sigma$, $\phi = \sigma_h$, and $\xi = \D u$ in \eqref{ineq:ccDualStressAppr} proves
	\begin{align}
		\cnstL{cnst:LEB}^{-1}\|\sigma - \sigma_h\|^r_{L^{r/t}(\Omega)} &\leq \int_{\Omega} (W^*(\sigma_h) - W^*(\sigma) - \D u : (\sigma_h - \sigma)) \d{x}\label{ineq:proofLEB}
	\end{align}
	for $\cnstL{cnst:LEB} \coloneqq \max\{3,3^{t/t'}\}\cnstS{cnst:cc}\big((1 + \cnstS{cnst:growthDWStarCnst})|\Omega| + \cnstL{cnst:Du}^p + \cnstS{cnst:growthDWStar}\cnstL{cnst:sigmah}^{p'}\big)^{t/t'}$.
	An integration by parts plus a piecewise application of the Poincar\'e inequality with $\cnstL{cnst:DuCP} = C_P \cnstL{cnst:Du}$ from Step 1 of the proof of \Cref{thm:aprioriGeneral} show that
	\begin{align*}
	-\int_{\Omega} \D u : (\sigma_h - \sigma) \d{x} = \int_{\Omega} u \cdot (1 - \Pi_\Tcal^\ell) f \d{x} \leq \cnstL{cnst:DuCP} \osc_\ell(f,\Tcal).
	\end{align*}
	This, \eqref{ineq:proofLEB}, and $E^*(\sigma) = E(u)$ imply the lower energy bound
	\begin{align}
		\cnstL{cnst:LEB}^{-1}\|\sigma - \sigma_h\|^r_{L^{r/t}(\Omega)} \leq E(u) -  E^*(\sigma_h) + \cnstL{cnst:DuCP} \osc_\ell(f,\Tcal). \label{ineq:aposterioriUnstabProofCC1}
	\end{align}
	
	\emph{Proof of \Cref{thm:aposterioriGeneral}.b.} The choice $\tau = \D W(R u_h)$, $\phi = \sigma$, $\xi = R u_h$, and $\varrho = \D u$ in \eqref{ineq:ccDualStressAppr}, and the $L^2$ orthogonality $\sigma_h - \D W(R u_h) \perp \Sigma(\Tcal)$ show that
	\begin{align}
	\cnstL{cnst:aprioriLeft1}^{-1}&\|\sigma - \D W(Ru_h)\|^r_{L^{r/t}(\Omega)}\nonumber\\
	&\leq \int_{\Omega} (W^*(\sigma) - W^*(\D W(R u_h)) - R u_h:(\sigma - \D W(R u_h))) \d{x}\nonumber\\
	&= E^*(\D W(R u_h)) - E(u) - \int_{\Omega} \big((R u_h - \D v):(\sigma - \sigma_h) + \D v:(\sigma - \sigma_{h})\big) \d{x} \label{ineq:aposterioriUnstabProofCC2}
	\end{align}
	for all $v \in W^{1,r/(r-t)}_0(\Omega;\R^m)$.
	Recall $E^*(\D W(R u_h)) = E_h(u_h)$ from the proof of \Cref{thm:aprioriGeneral}. An integration by parts proves
	\begin{align*}
	-\int_{\Omega} \D v:(\sigma - \sigma_h) \d{x} = -\int_{\Omega} f\cdot(1-\Pi_\Tcal^\ell)v \d{x}.
	\end{align*}
	This, the sum of \eqref{ineq:aposterioriUnstabProofCC1}--\eqref{ineq:aposterioriUnstabProofCC2}, and a weighted Young inequality imply
	\begin{align*}
	&\cnstL{cnst:LEB}^{-1}\|\sigma - \sigma_h\|^r_{L^{r/t}(\Omega)} + \cnstL{cnst:aprioriLeft1}^{-1}\|\sigma - \D W(R u_h)\|^r_{L^{r/t}(\Omega)}\\
	&\qquad\leq E_{h}(u_h) - E^*(\sigma_h) + \cnstL{cnst:DuCP} \textup{osc}_\ell(f,\mathcal{T}) + \frac{\varepsilon}{r}\|\sigma - \sigma_h\|_{L^{r/t}(\Omega)}^r\\
	&\qquad\qquad + \frac{\varepsilon^{1-r'}}{r'} \|R u_h - \D v\|_{L^{r/(r-t)}(\Omega)}^{r'} - \int_\Omega f\cdot(1-\Pi^k_\Tcal)v \d{x}
	\end{align*}
	for $\varepsilon \coloneqq \cnstL{cnst:LEB}^{-1}$. This proves (b) with $\cnstL{cnst:aposterioriLeft1}\coloneqq r'\cnstL{cnst:LEB}$ and $\cnstL{cnst:aposterioriRight} \coloneqq \cnstL{cnst:LEB}^{r'-1}/r'$.
	
	\emph{Proof of \Cref{thm:aposterioriGeneral}.c} The definition of the convex conjugate $W^*$ shows
	\begin{align}
		-\int_{\Omega} \big(W^*(\D W(R u_h)) + W(\D u)\big) \d{x} \leq - \int_{\Omega} \D W(R u_h):\D u \d{x}.\label{ineq:proofAposterioriEnergyStep1.1}
	\end{align}
	An integration by parts, the Cauchy inequality, and a piecewise application of the Poincar\'e inequality as in the proof of (a) lead to
	\begin{align}
		- \int_{\Omega} \D W(R u_h):\D u \d{x} &= \int_{\Omega} (\sigma_h - \D W(R u_h)):\D u \d{x} + \int_{\Omega} \Pi_{\Tcal}^\ell f \cdot u \d{x}\nonumber\\
		&\leq \cnstL{cnst:Du}\|\sigma_h - \D W(R u_h)\|_{L^{p'}(\Omega)} + \int_{\Omega} f \cdot u \d{x} + \cnstL{cnst:DuCP}\osc_\ell(f,\Tcal).\label{ineq:proofAposterioriEnergyStep1.2}
	\end{align}
	Recall $E^*(\D W(R u_h)) = E_h(u_h)$ from Step 1 of the proof of \Cref{thm:aprioriGeneral}. The combination of this and \eqref{ineq:proofAposterioriEnergyStep1.1}--\eqref{ineq:proofAposterioriEnergyStep1.2} results in
	\begin{align}
		E_h(u_h) - E(u) \leq \cnstL{cnst:Du}\|\sigma_h - \D W(R u_h)\|_{L^{p'}(\Omega)} + \cnstL{cnst:DuCP}\osc_\ell(f,\Tcal).\label{ineq:proofAposterioriEnergyStep1}
	\end{align}
	An integration by parts in \eqref{ineq:aposterioriUnstabProofCC2} and a weighted Young inequality show
	\begin{align*}
		E(u) - E_h(u_h) &\leq \cnstL{cnst:aposterioriLeft1}^{-1}\|\sigma-\sigma_h\|^{r}_{L^{r/t}(\Omega)}\nonumber\\
		&\quad + (r'/r)^{r'-1}\cnstL{cnst:aposterioriRight}\|R u_h - \D v\|^{r'}_{L^{r/(r-t)}(\Omega)} -\int_{\Omega} f\cdot(1-\Pi_{\Tcal}^\ell) v \d{x}
	\end{align*}
	for all $v \in W^{1,r/(r-t)}_0(\Omega;\R^m)$.
	This and \eqref{ineq:proofAposterioriEnergyStep1} conclude the proof of (c).
\end{proof}
\begin{remark}[superlinear convergent LEB]\label{rem:LEB}
	The arguments from the proof of \Cref{cor:convergenceRate} verify $E^*(\sigma) - E^*(\sigma_h) \lesssim \|\sigma - \sigma_h\|_{L^{r/t}(\Omega)}$. In particular, the lower energy bound can converge superlinearly towards $E(u)$, which is observed in all numerical benchmarks of \Cref{sec:numericalExamples}.
\end{remark}
\begin{remark}[discrete duality gap]\label{rem:discreteDualityGap}
	The discrete lowest-order mixed FEM for the optimal design problem in \cite{CLiu2015} has no discrete duality gap to a nonconforming Crouzeix-Raviart FEM for the primal minimization problem \cite[Theorem 3.1]{CLiu2015}. This is restricted to the lowest-order case and cannot be expected here. In fact, recall the maximizer $\sigma_\mathrm{M}$ of $E^*$ in $Q(f,\Tcal)$ and let $\varrho \in L^p(\Omega;\M)$ with $\varrho \in \partial W^*(\sigma_\mathrm{M})$ a.e.~in $\Omega$. The choice $\tau = \D W(R u_h)$, $\phi = \sigma_\mathrm{M}$, and $\xi = R u_h$ in \eqref{ineq:ccDualStressAppr} and the definition of $R$ in \eqref{def:grRecGeneral} show that
	\begin{align*}
	&\|\sigma_\mathrm{M} - \D W(R u_h)\|^r_{L^{r/t}(\Omega)}\\
	&\qquad\lesssim \int_\Omega (W^*(\sigma_\mathrm{M}) - W^*(\D W(R u_h))) \d{x} = E_h(u_h) - E^*(\sigma_\mathrm{M}).
	\end{align*}
	It has to be expected for a general nonlinear function $\D W$ that $\D W(R u_h) \notin \Sigma(\Tcal)$. Then $E_h(u_h) - E^*(\sigma_h) \geq E_h(u_h) - E^*(\sigma_\mathrm{M}) > 0$. There is a discrete duality gap.
\end{remark}
\section{Numerical examples}\label{sec:numericalExamples}
Throughout this section, let $V_h = P_k(\Tcal) \times P_k(\Fcal(\Omega))$, $\Sigma(\Tcal) = \RT_k^\pw(\Tcal;\R^n)$, $t = 1 + s/p$, $r/t = p'$, and $r/(r-t) = p$ with $r = r' = 2$ in all examples of \Cref{sec:examples} in 2D.
\subsection{Numerical realization}\label{sec:numerical_realization}
Some remarks on the implementation, the adaptive mesh-refinements, and the output precede the three numerical examples.
\subsubsection{Implementation} The discrete Euler-Lagrange equations \eqref{eq:dELE} have been realized with an iterative solver \texttt{fminunc} from the MATLAB standard library in an extension of the data structures and the short MATLAB programs in \cite{AlbertyCFunken1999,CBrenner2017,AFEM}. The first and (piecewise) second derivatives of $W$ have been provided for the trust-region quasi-Newton scheme with parameters of \texttt{fminunc} set to $\texttt{FunctionTolerance} = \texttt{OptimalityTolerance} = \texttt{StepTolerance} = 10^{-14}$ and $\texttt{MaxIterations} = \texttt{Inf}$ for improved accuracy.

The class of minimization problems at hand allows, in general, for multiple exact and discrete solutions. The numerical experiments select one (of those) by the approximation in \texttt{fminunc} with the initial value computed as follows. On the coarse initial triangulations $\Tcal_0$ from \Cref{fig:initTriangulation}, the initial value $v_h = (v_\Tcal,v_\Fcal) \in V_h$ is defined by $v_\Tcal \equiv 1$ and $v_\Fcal|_F \equiv 1$ on any $F \in \Fcal(\Omega)$. On each refinement $\widehat{\Tcal}$ of some triangulation $\Tcal$, the initial approximation is defined by a prolongation of the output of the call \texttt{fminunc} on the coarse triangulation $\Tcal$. The prolongation maps $(v_\Tcal,v_\Fcal)$ onto $(v_{\widehat{\Tcal}},v_{\widehat{\Fcal}})$ by piecewise $L^2$ projections (from one triangle to a subtriangle or one edge to some subedge) and defines the remaining values $v_{\widehat{F}} \coloneqq \Pi_{\widehat{F}}^k v_T$ for any edge $\widehat{F} \in \widehat{\Fcal}$ across the triangle $T$ in case $\widehat{F} \subset T$ but $\widehat{F} \not\subset \partial T$.

The numerical integration of polynomials is exact with the quadrature formula in \cite{HammerStroud1956}: For non-polynomial functions such as $W(R v_h)$ with $v_h \in V_h$, the number of chosen quadrature points allows for exact integration of polynomials of order $p(k+1)$ with the growth $p$ of $W$ and the polynomial order $k$ of the discretization; the same quadrature formula also applies to the integration of the dual energy density $W^*$. The implementation is based on the in-house AFEM software package in MATLAB \cite{AFEM}.
\begin{figure}[h!]
	\begin{minipage}[t]{0.485\textwidth}
		\centering
		\includegraphics[scale=0.73]{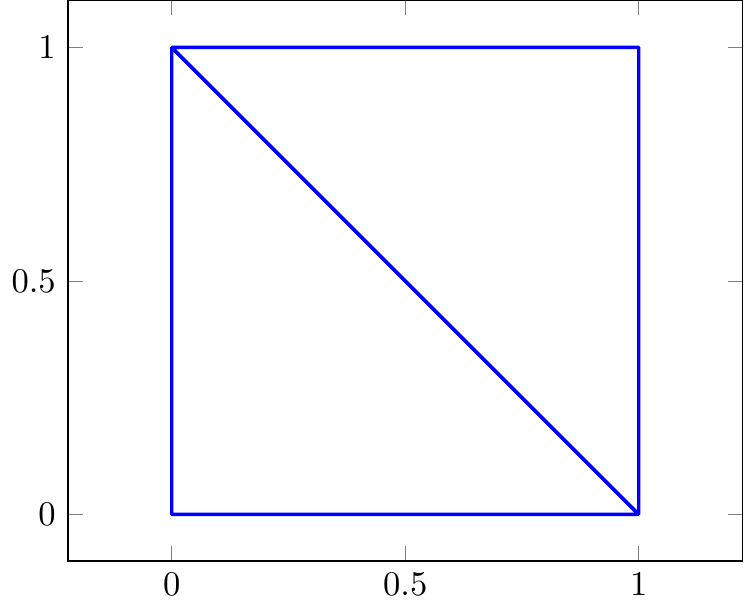}
	\end{minipage}\hfill
	\begin{minipage}[t]{0.485\textwidth}
		\centering
		\includegraphics[scale=0.73]{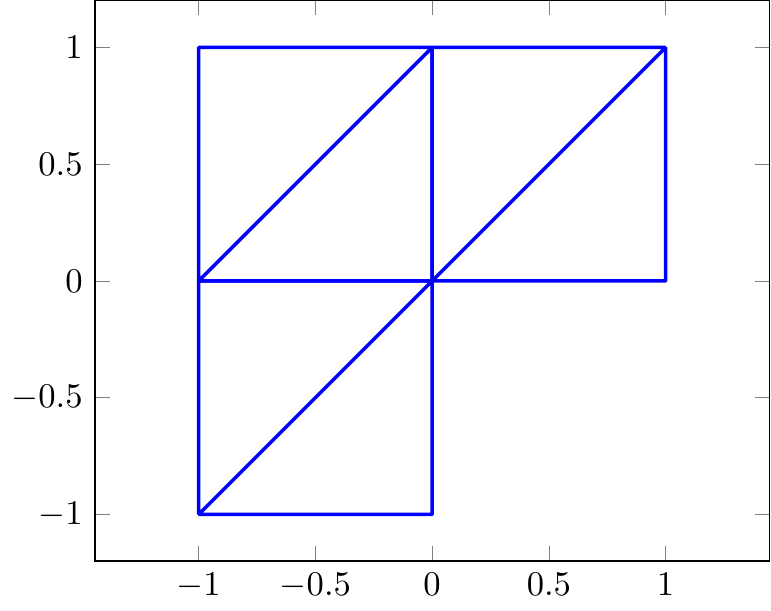}
	\end{minipage}\hfill
	\caption{Initial triangulation $\Tcal_0$ of the square (left) and of the L-shaped domain (right)}
	\label{fig:initTriangulation}
\end{figure}
\subsubsection{Adaptive mesh refinement}\label{sec:adaptive_mesh_refinement}
The a~posteriori estimate from \Cref{thm:aposterioriGeneral} motivates the refinement-indicator
\begin{align}
\begin{split}
	\eta(T) &\coloneqq \|\sigma_h - \D W(R u_h)\|^{p'}_{L^{p'}(T)} + |T|^{p'/n}\|(1-\Pi_{\Tcal}^\ell) f\|_{L^{p'}(T)}^{p'}\\
	&\qquad\qquad + |T|^{(2-p)/p}\|R u_h - \D v_C\|^2_{L^2(T)} \quad\text{for any } T \in \Tcal
\end{split}\label{def:adaptive_refining_indicator}
\end{align}
with a postprocessing $v_C \in S^{k+1}_0(\Tcal) \coloneqq P_{k+1}(\Tcal) \cap C_0(\overline{\Omega})$ that minimizes
\begin{align*}
	\sum_{T \in \Tcal} |T|^{(2-p)/p} \|R u_h - \D w_C\|_{L^2(T)}^2 \quad\text{amongst } w_C \in S^{k + 1}_0(\Tcal).
\end{align*}
The later is a (relatively cheap) linear problem with weights to mimic the $L^p$ norm. (Since $\|R u_h - \D v_C\|^2_{L^p(T)} \approx |T|^{(2-p)/p}\|R u_h - \D v_C\|^2_{L^2(T)}$ for any $T \in \Tcal$ \cite[Lemma 4.5.3]{BrennerScott2008}, $\min_{w_{C} \in S^{k + 1}_0(\Tcal)} \|R u_h - \D w_{C}\|^{2}_{L^{p}(\Omega)} \approx  \|R u_h - \D v_C\|^{2}_{L^{p}(\Omega)}$.)
The triangulations are refined either uniformly ($\theta = 1$) or adaptively ($\theta < 1$) with the bulk parameter $\theta$: On each level $\ell = 1,2,\dots$, the adaptive algorithm marks a subset $\mathcal{M}_\ell \subset \Tcal_\ell$ (of minimal cardinality) with
\begin{align*}
\theta \sum_{T \in \Tcal_\ell} \eta(T) \leq  \sum_{K \in \mathcal{M}_\ell} \eta(K).
\end{align*}
The refinement of $\mathcal{M}_\ell$ with the newest-vertex bisection \cite{CBrenner2017,Stevenson2008} generates the new triangulation $\Tcal_{\ell+1}$.
\subsubsection{Output}
The numerical approximation of the solution to the three model problems in \Cref{sec:examples} is analysed with the focus (i) on the convergence rate of the lower energy bound (LEB) from \Cref{thm:aposterioriGeneral}.a towards the exact energy $\min E(V) - \mathrm{LEB}$ and (ii) on the a~posteriori error estimate with
\begin{align}
\text{RHS} \coloneqq E_h(u_h) - E^*(\sigma_{h}) + \osc_k(f,\Tcal_\ell) + \|R u_h-\D v_C\|^{2}_{L^{p}(\Omega)}\label{def:aposteriori}
\end{align}
from \Cref{thm:aposterioriGeneral}.b (and $v_C$ from \Cref{sec:adaptive_mesh_refinement}) and its comparison with the stress error $\|\sigma - \sigma_h\|^{2}_{L^{p'}(\Omega)}$ (if available). The uniform or adaptive mesh-refinement leads to convergence history plots of RHS, $\|\sigma - \sigma_h\|^{2}_{L^{p'}(\Omega)}$, $E(u) - \mathrm{LEB}$, and $E_h(u_h) - E^*(\sigma_h)$ against the number of degrees of freedom (ndof) displayed in \Cref{fig:pLaplaceSquareError}--\ref{fig:2WellConvUniform} below for different polynomial degrees $k$ of \Cref{fig:legend}. (Recall the scaling $\text{ndof} \propto h_{\max}^2$ in 2D for uniform mesh refinements with constant mesh-size $h_{\max}$ in a log-log plot.) In the numerical experiments without a~priori knowledge of $u$, the reference value $\min E(V)$ stems from an Aitken extrapolation of the numerical results for a sequence of uniformly refined triangulations. 
\begin{figure}[h!]
	\centering
	\includegraphics[scale=1.2]{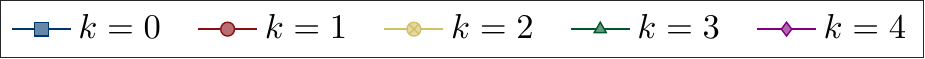}
	\caption{Polynomial degrees $k = 0,\dots,4$ in the numerical benchmarks of \Cref{sec:numericalExamples}}
	\label{fig:legend}
\end{figure}
\subsection{The p-Laplace equation}\label{sec:exPLaplace}
Let $p = 4$, $r = s = 2$, and $t = 1 + (p-2)/p = 3/2$ in the first example of \Cref{sec:examples}.
\subsubsection{Smooth solution on unit square}\label{sec:expLaplaceSquare}
Let $f \coloneqq -\div(|\D u|^{2}\D u)$ be defined by $u \in P_4(\Omega) \cap W^{1,4}_0(\Omega)$ with
\begin{align*}
	u(x_1,x_2) = x_1x_2(x_1 - 1)(x_2 - 1) \quad\text{for any } (x_1,x_2) \in \Omega = (0,1)^2.
\end{align*}
The energy functional $E$ is strictly convex, so the minimal energy $\min E(V) = E(u) = -5.10204\times10^{-04}$ is attained at the unique minimizer $u$. The interest is on the errors $\|\D u - R u_h\|_{L^4(\Omega)}$ and $\|\sigma - \sigma_h\|_{L^{4/3}(\Omega)}$.
For the smooth solution $u$ at hand, the data oscillation $\osc_k(f,\Tcal)$ in \eqref{def:aposteriori} is replaced by $\|h_\Tcal^k(1-\Pi_{\Tcal}^k)f\|_{L^{4/3}(\Omega)}$ to mimic \eqref{ineq:higherOscillation}.

\Cref{fig:pLaplaceSquareError} displays that the stress error $\|\sigma - \sigma_h\|_{L^{4/3}(\Omega)}^2$ converges optimally with convergence rates $k+1$ on uniform meshes, although \Cref{rem:pLaplace} only guarantees the convergence rates $2(k+1)/3$. The error $\|\D u - R u_h\|_{L^4(\Omega)}^2$ and RHS in \eqref{def:aposteriori} converge with the same suboptimal rates as depicted in \Cref{fig:pLaplaceSquareError}.a. For $k = 0$, the convergence rates of $\|\sigma - \sigma_h\|_{L^{4/3}(\Omega)}^2$ and $\|\D u - R u_h\|_{L^4(\Omega)}^2$ coincide, the latter is better than $1/2$ predicted in \cite{BarrettLiu1993}. Adaptive mesh refinements surprisingly recover the optimal convergence rates $k+1$ for $\|\D u - R u_h\|_{L^4(\Omega)}^2$ and RHS for any polynomial degree $k$ as depicted in \Cref{fig:pLaplaceSquareError}.b.
\Cref{fig:pLaplaceEnergyError} displays convergence rates $k+1$ for the discrete duality gap $E_h(u_h) - E^*(\sigma_h)$ and $k/2 + 1$ for $\min E(V) - \mathrm{LEB}$ on uniform and adaptive meshes.
\begin{figure}[h!]
	\begin{center}
		\begin{minipage}[t]{0.485\textwidth}
			\centering
			\includegraphics[scale=0.84]{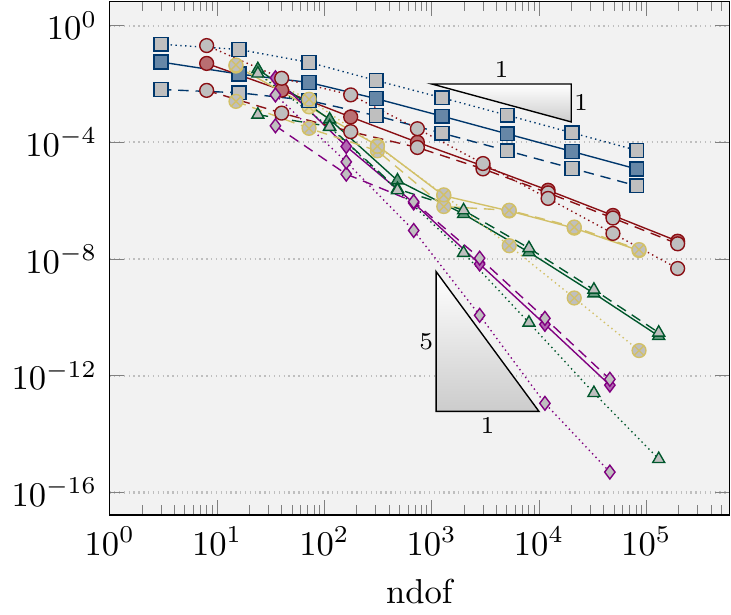}
		\end{minipage}\hfill
		\begin{minipage}[t]{0.485\textwidth}
			\centering
			\includegraphics[scale=0.84]{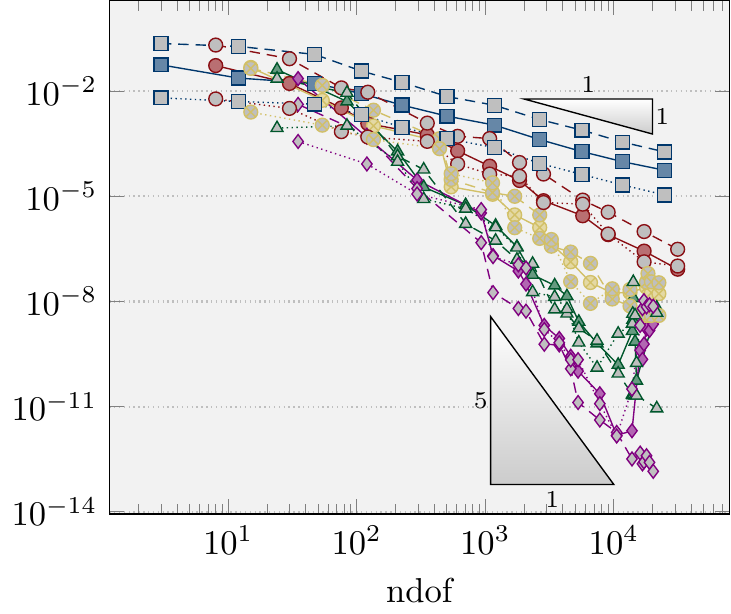}
		\end{minipage}
	\end{center}
	\caption{Convergence history plot of RHS (solid line), $\|\D u - R u_h\|^2_{L^4(\Omega)}$ (dashed line), and $\|\sigma - \sigma_h\|_{L^{4/3}(\Omega)}^2$ (dotted line) in \Cref{sec:expLaplaceSquare} for $4$-Laplace with $k$ from \Cref{fig:legend} on uniform (left) and adaptive (right) meshes}
	\label{fig:pLaplaceSquareError}
\end{figure}
\begin{figure}[h!]
	\begin{center}
		\begin{minipage}[t]{0.485\textwidth}
			\centering
			\includegraphics[scale=0.84]{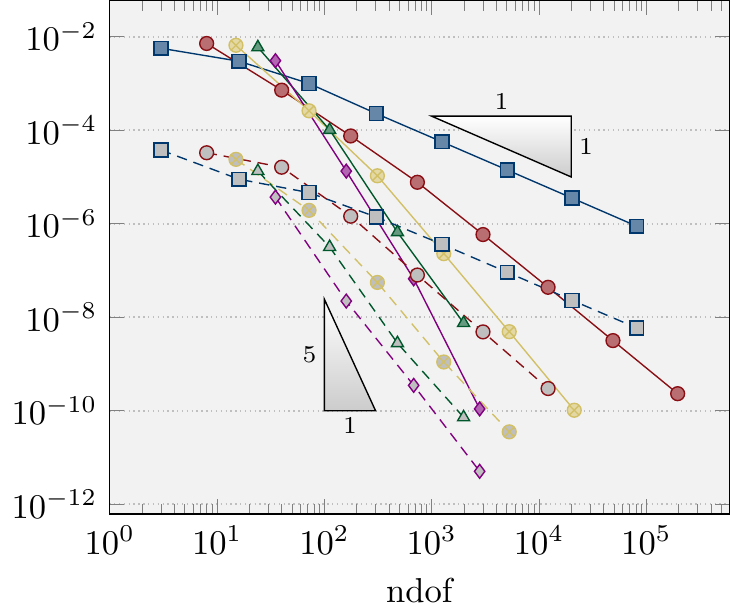}
		\end{minipage}\hfill
		\begin{minipage}[t]{0.485\textwidth}
			\centering
			\includegraphics[scale=0.84]{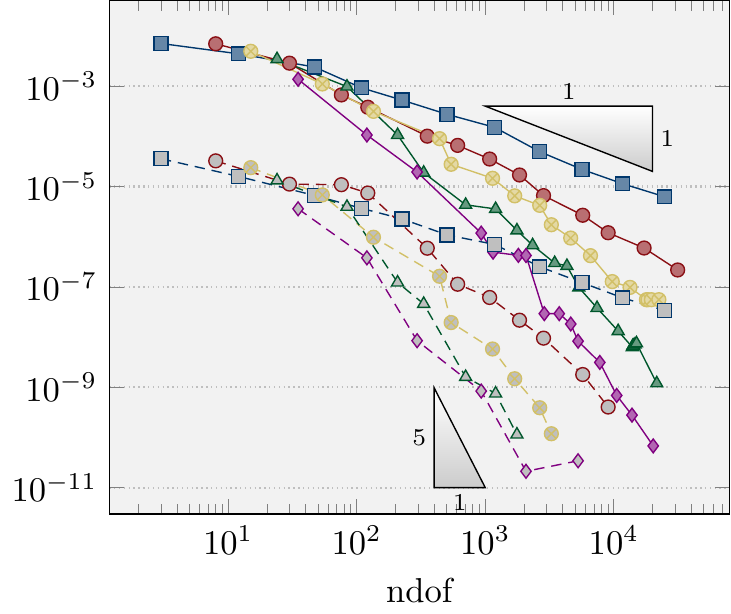}
		\end{minipage}
	\end{center}
	\caption{Convergence history plot of $E(u) - \mathrm{LEB}$ (solid line) and discrete duality gap $E_h(u_h) - E^*(\sigma_h)$ (dashed line) in \Cref{sec:expLaplaceSquare} for $4$-Laplace with $k$ from \Cref{fig:legend} on uniform (left) and adaptive (right) meshes}
	\label{fig:pLaplaceEnergyError}
\end{figure}
\subsubsection{L-shaped domain with corner singularity}\label{sec:expLaplaceLshape}
Let $\Omega = (-1,1)^2 \setminus [0,1) \times (-1,0]$ and $f \equiv 1$ with the reference value $\min E(V) = -0.34333387$. Theorem 2 in \cite{Dobrowolski1983} indicates a split $u = v + w$ of the exact solution $u$ into a singular part $v(r,\varphi) = r^\alpha t(\varphi)$ in terms of polar coordinates $(r,\varphi)$, where $w$ is a smooth function around the origin. The parameter $\alpha = (11 - \sqrt{13})/9 = 0.8216$ depends on the angle $\omega = 3\pi/2$ of the corner and $p$. The scaling $|\D u| \propto r^{\alpha - 1}$ and $|\sigma| \propto r^{(\alpha-1)(p-1)}$ indicates $\sigma \in W^{1,\beta}(\Omega;\R^n)$ for $\beta < 2/(1 - (\alpha-1)(p-1)) = 1.3028$ and we expect a convergence rate $\min\{1/2,1-1/\beta\} = 0.2324$ for the stress error $\|\sigma-\sigma_h\|_{L^{4/3}(\Omega)}^2$ on uniformly refined triangulations.
\Cref{fig:pLaplaceLshapeConv} displays a better convergence rate $0.4$ for RHS on uniform meshes. Adaptive computation refines towards the reentrant corner as depicted in \Cref{fig:pLaplaceLshapeTriangulation} and improves the convergence rate of RHS to $1$ for $k = 0$ and $2.2$ for $k = 4$.
The adaptive mesh around the singular point is much finer for larger $k$ in comparison to $k = 0$. \Cref{fig:pLaplaceLshapeEnergyConv} displays a better convergence rate of $\min E(V) - \mathrm{LEB}$ and of the discrete duality gap $E_h(u_h) - E^*(\sigma_{h})$ for $k \geq 1$.
A larger polynomial degree $k$ leads to a better convergence rate, but undisplayed computer experiments suggest that the gain is more significant for $p$ close to 2.
\begin{figure}[h!]
	\begin{minipage}[t]{0.48\textwidth}
		\centering
		\includegraphics[scale=0.75]{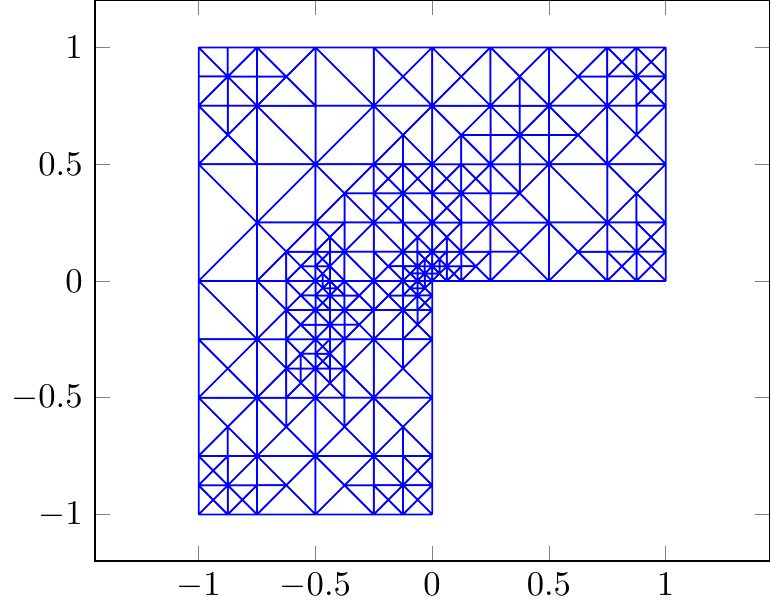}
	\end{minipage}\hfill
	\begin{minipage}[t]{0.48\textwidth}
		\centering
		\includegraphics[scale=0.75]{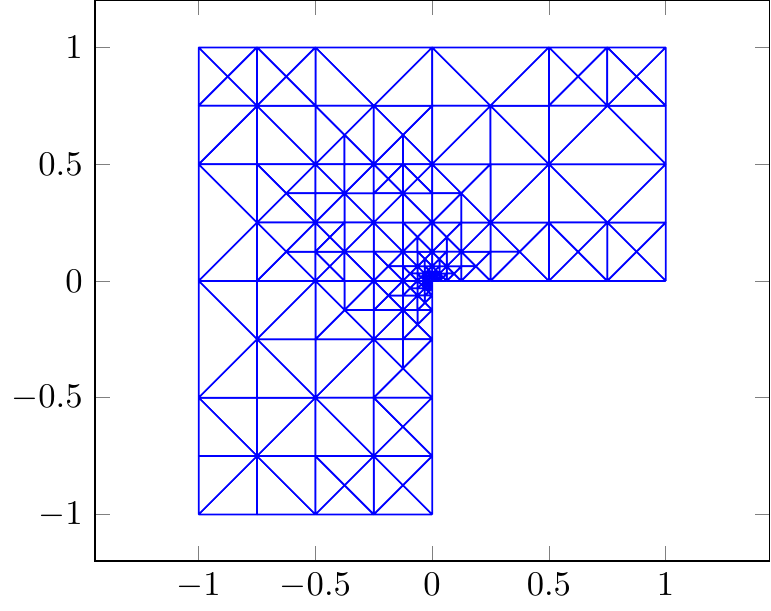}
	\end{minipage}
	\caption{Adaptive mesh of L-shape domain for $4$-Laplace in \Cref{sec:expLaplaceLshape} with (a) 431 triangles (1055 dof) for $k = 0$ (left) and (b) 481 triangles (10710 dof) for $k = 4$ (right)}
	\label{fig:pLaplaceLshapeTriangulation}
\end{figure}
\begin{figure}[h!]
	\begin{minipage}[t]{0.485\textwidth}
		\centering
		\includegraphics[scale=0.84]{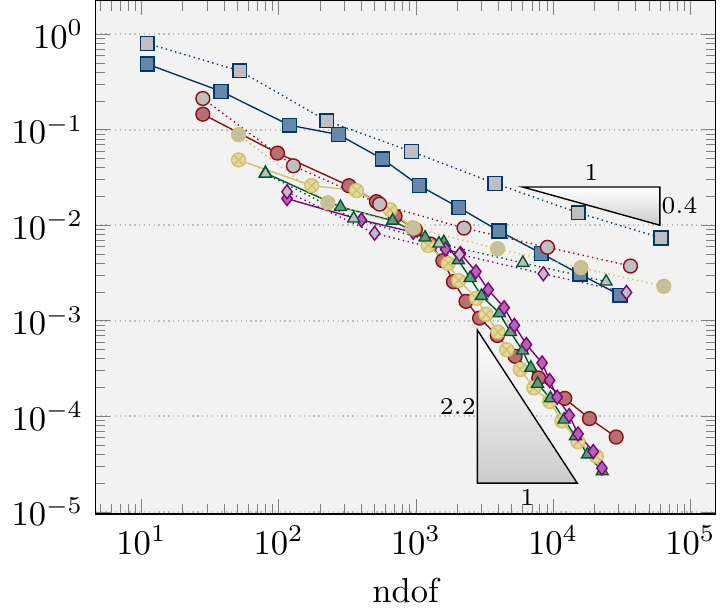}
		\caption{Convergence history plot of RHS for $4$-Laplace in \Cref{sec:expLaplaceLshape} with $k$ from \Cref{fig:legend} on adaptive (solid line) and uniform (dashed line) meshes}
		\label{fig:pLaplaceLshapeConv}
	\end{minipage}\hfill
	\begin{minipage}[t]{0.485\textwidth}
		\centering
		\includegraphics[scale=0.84]{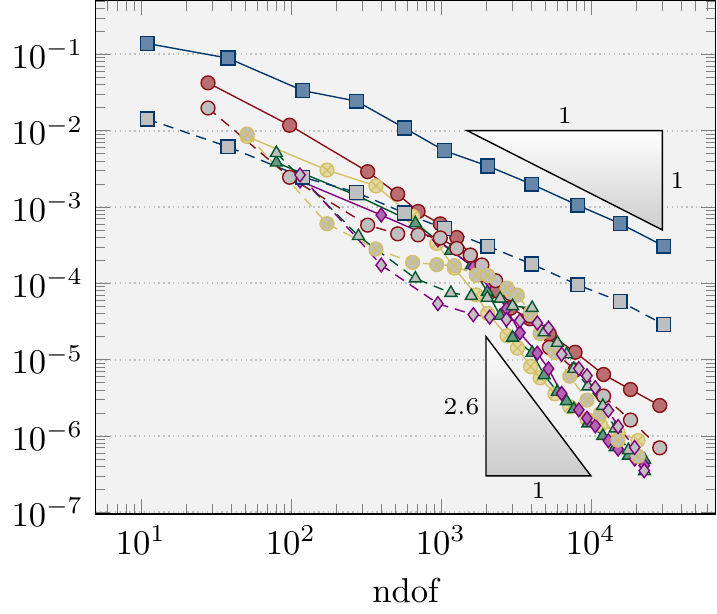}
		\caption{Convergence history plot of $\min E(V) - \mathrm{LEB}$ (solid line) and $E_h(u_h) - E^*(\sigma_h)$ (dashed line) for $4$-Laplace in \Cref{sec:expLaplaceLshape} with $k$ from \Cref{fig:legend} on adaptive meshes}
		\label{fig:pLaplaceLshapeEnergyConv}
	\end{minipage}
\end{figure}

\subsection{Optimal design problem}\label{sec:exODP}
Recall the parameters $p = r = 2$, $s = 0$, and $t = 1$ from \Cref{ex:ODP} for the optimal design problem (ODP) in topology optimization. Let $\mu_1 = 1$, $\mu_2 = 2$, $\xi_1 = \sqrt{2\lambda \mu_1/\mu_2}$ for a fixed $\lambda > 0$, $\xi_2 = \mu_2\xi_1/\mu_1$, and $f \equiv 1$. The values of $\lambda$ in the following benchmarks are from \cite[Figure 1.1]{BartelsC2008}.

\subsubsection{Material distribution and volume fraction}
The material distribution in the next two examples consists of an interior region (blue), a boundary region (yellow), and a transition layer, also called microstructure zone with a fine mixture of the two materials as depicted in \Cref{fig:ODPTriangulation}. The approximated volume fractions $\Lambda(|\Pi_{\Tcal}^0 R u_h|)$ for a discrete minimizer $u_h$ with $\Lambda(\xi) = 0$ if $0 \leq \xi \leq \xi_1$, $(\xi - \xi_1)/(\xi_2 - \xi_1)$ if $\xi_1 \leq \xi \leq \xi_2$, and $1$ if $\xi_2 \leq \xi$,
define the colour map for the fraction plot of \Cref{fig:ODPTriangulation}.
\begin{figure}[h!]
	\begin{minipage}[t]{0.48\textwidth}
		\centering
		\includegraphics[scale=0.45]{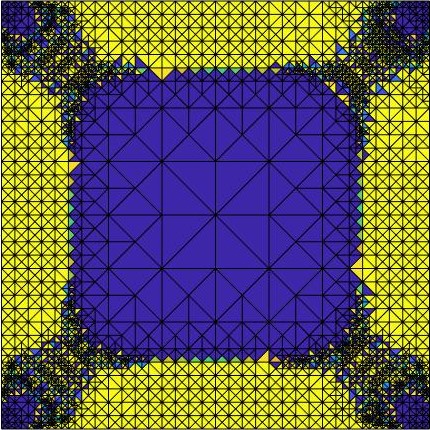}
	\end{minipage}\hfill
	\begin{minipage}[t]{0.48\textwidth}
		\centering
		\includegraphics[scale=0.45]{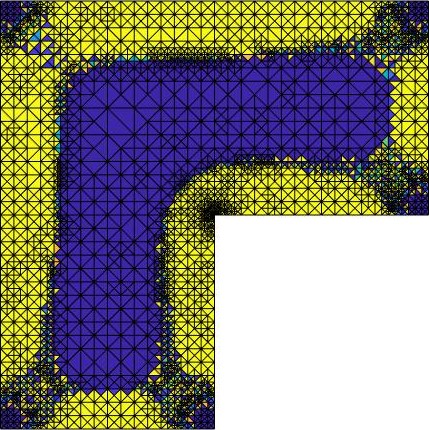}
	\end{minipage}
	\caption{Material distribution for ODP in \Cref{sec:exODP} on adaptive mesh ($k=0$) of (a) unit square (left) with 5975 triangles and of (b) L-shaped domain (right) with 5808 triangles}
	\label{fig:ODPTriangulation}
\end{figure}

\subsubsection{Unit square}\label{sec:exODPSquare}
Let $\Omega = (0,1)^2$ and $\lambda  = 0.0084$ with the reference value $\min E(V) = -0.011181337$. 
\Cref{fig:ODPConvSquare}.a shows that RHS converges with a convergence rate $3/4$ for $k = 0$ and $4/3$ for $k = 4$ 
on uniform and adaptive meshes. Higher polynomial degrees $k$ slightly improve the convergence rate of RHS.
The adaptive algorithm refines towards the microstructure zone as depicted in \Cref{fig:ODPTriangulation}.a. This leads 
to marginal improvements of the convergence rates for adaptive mesh refinements.
\Cref{fig:ODPConvSquare}.b displays larger convergence rates for the duality gap $E_h(u_h) - E^*(\sigma_{h})$ with $k \geq 1$ 
on uniform and adaptive refined triangulations.
Undisplayed numerical experiments suggest equal convergence rates of $E_h(u_h) - E^*(\sigma_{h})$ and $\min E(V) - \mathrm{LEB}$ 
and that the convergence rates of RHS, $E_h(u_h) - E^*(\sigma_{h})$, and $\min E(V) - \mathrm{LEB}$ improve with 
smaller transition layer and reaches the highest possible value $k+1$ if the measure of the transition layer vanishes.
This coincides with the numerical observations in \cite[Section 6]{CGuentherRabus2012}.
Notice that the convergence rates $k+1$ for $\min E(V) - \mathrm{LEB}$ are only possible if the oscillation $\osc_k(f,\Tcal)$ vanishes.

\subsubsection{L-shaped domain with corner singularity}\label{sec:exODPLshape}
Let $\Omega = (0,1)^2\setminus [0,1)\times(-1,0]$, and $\lambda = 0.0145$ with the reference value $\min E(V) = -0.074551285$. RHS converges suboptimally with a convergence rate $2/3$ on uniform meshes for any polynomial degree $k$ in \Cref{fig:ODPConvLshape}.a. The adaptive algorithm refines towards the reentrant corner as well as the microstructure zone in \Cref{fig:ODPTriangulation}.b. This improves the convergence rate of RHS to $3/4$ for $k = 0$ and $5/4$ for $k = 3$.
\Cref{fig:ODPConvLshape}.b shows that the convergence rates of the discrete duality gap $E_h(u_h) - E^*(\sigma_h)$ improves with larger $k$ on uniform meshes.
Similar to the previous experiments, the convergence rates of RHS, $E_h(u_h) - E^*(\sigma_h)$, and $\min E(V) - \mathrm{LEB}$ improve with higher polynomial degrees $k$, but the gain is less significant.
\begin{figure}[h!]
	\begin{minipage}[t]{0.485\textwidth}
		\centering
		\includegraphics[scale=0.84]{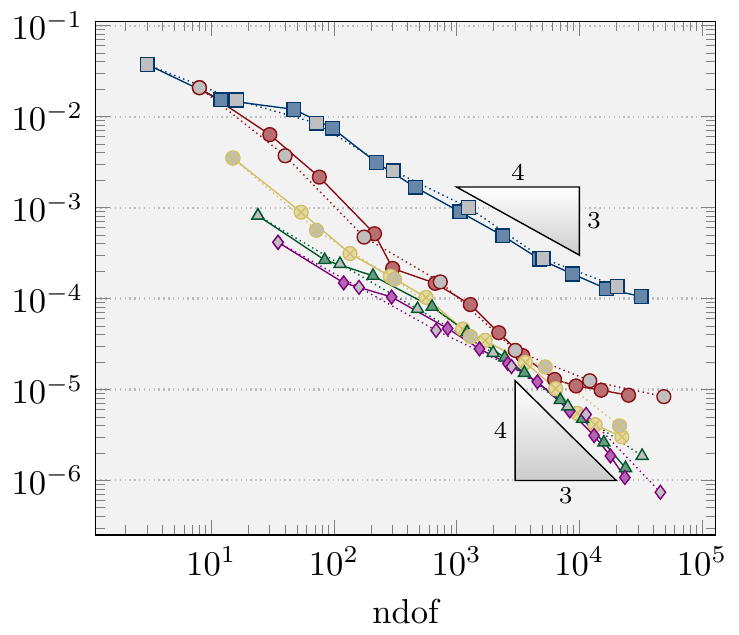}
	\end{minipage}\hfill
	\begin{minipage}[t]{0.485\textwidth}
		\centering
		\includegraphics[scale=0.84]{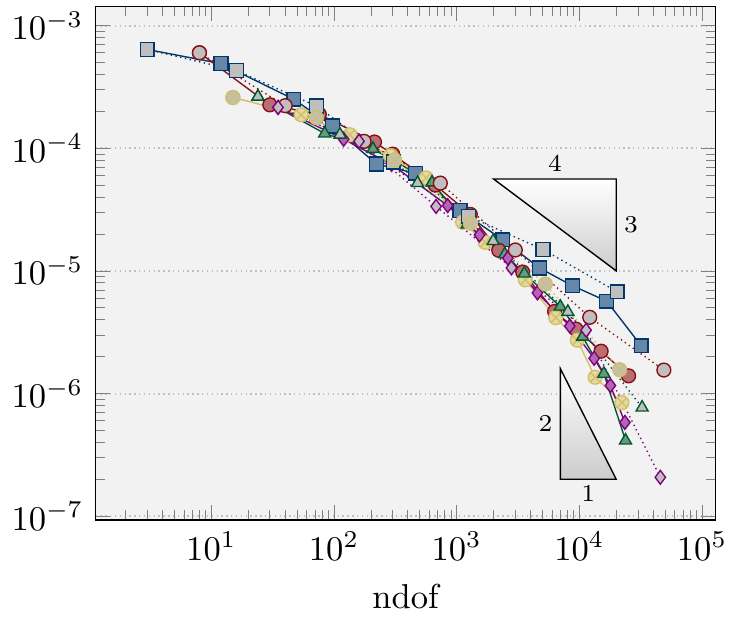}
	\end{minipage}
	\caption{Convergence history plot of RHS (left) and of $E_h(u_h) - E^*(\sigma_{h})$ (right) for ODP in \Cref{sec:exODPSquare} on adaptive (solid line) and uniform (dashed line) meshes}
	\label{fig:ODPConvSquare}
\end{figure}

\begin{figure}[h!]
	\begin{minipage}[t]{0.485\textwidth}
		\centering
		\includegraphics[scale=0.84]{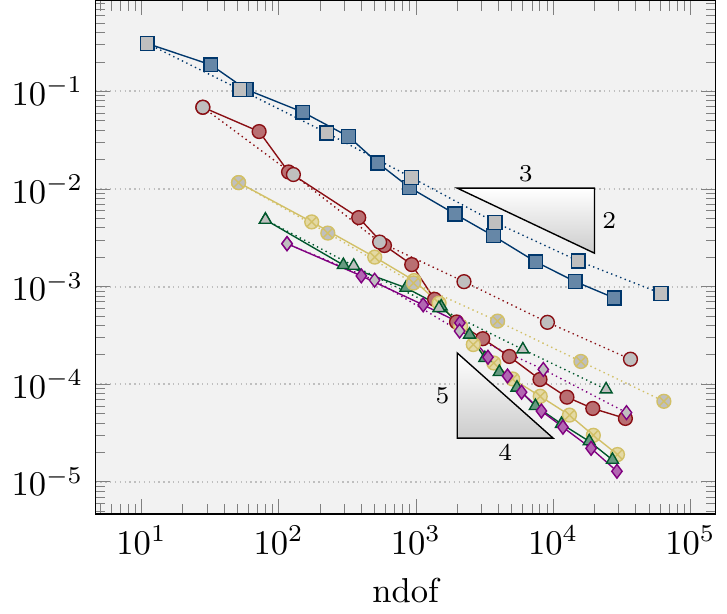}
	\end{minipage}\hfill
	\begin{minipage}[t]{0.485\textwidth}
		\centering
		\includegraphics[scale=0.84]{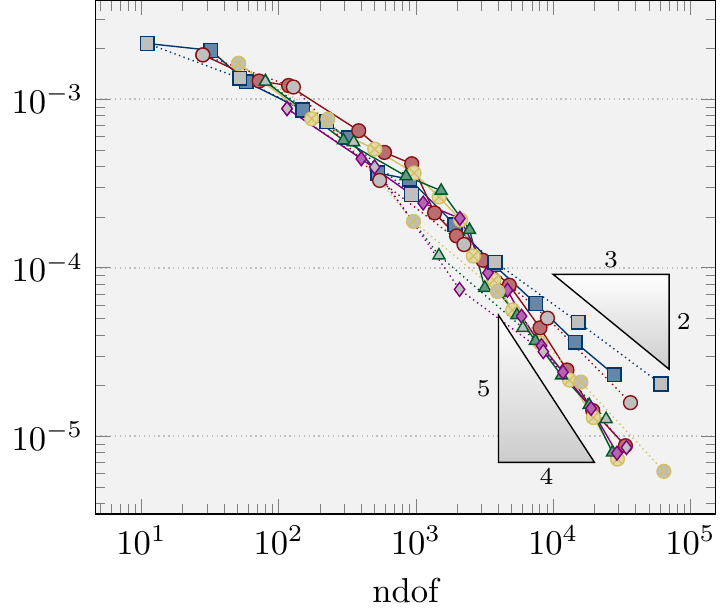}
	\end{minipage}
	\caption{Convergence history plot of RHS (left) and of $E_h(u_h) - E^*(\sigma_{h})$ (right) for ODP in \Cref{sec:exODPLshape} on adaptive (solid line) and uniform (dashed line) meshes}
	\label{fig:ODPConvLshape}
\end{figure}

\subsection{Two-well computational benchmark}\label{sec:ex2well}
The computational benchmark in \cite{CPlechac1997,CJochimsen2003} involves an additional quadratic term $1/2\|g - u_\Tcal\|^2_{L^2(\Omega)}$ in \eqref{intro:energy}, that leads to uniqueness of the continuous minimizer $u$ and of the volume component $u_\Tcal$ of the discrete minimizer $u_h = (u_\Tcal,u_\Fcal)$, and inhomogeneous Dirichlet data $u_\mathrm{D}$. \Cref{thm:apriori} and \Cref{thm:aposteriori} can be extended to the situation at hand for the discrete dual energy
\begin{align*}
	E_d^*(\tau_h) &\coloneqq -\int_{\Omega} \big(W^*(\tau_h) + g \cdot (\div \tau_h + \Pi_{\Tcal}^k f)\big) \d{x}  + \int_{\partial \Omega} u_\mathrm{D} \cdot (\tau_h \nu) \d{s}\\
	&\qquad\qquad - \frac{1}{4\alpha}\|\div \tau_h + \Pi_{\Tcal}^k f\|^2_{L^2(\Omega)} \quad\text{for any } \tau_h \in \Sigma(\Tcal) \cap W^{p'}(\div,\Omega;\M).
\end{align*}
(The quadrature formula of \Cref{sec:numerical_realization}   computates the integral of  $W^*(\tau_h)$ 
and $W^*$ is evaluated pointwise by the MATLAB routine \texttt{fminunc} with high accuracy for $\texttt{FunctionTolerance} = \texttt{OptimalityTolerance} = \texttt{StepTolerance} = 10^{-14}$).
The precise data for $u_\mathrm{D}$, $f$, $g$, and $u$ can be found in \cite[page 179]{CJochimsen2003}.
The exact solution $u$ on $\Omega = (0,1) \times (0,3/2)$ is piecewise smooth and $\D u$ jumps across the interface $S = \operatorname{conv}\{(1,0),(0,3/2)\}$. 
The initial triangulation $\Tcal_0$ consists of two triangles with the interiors in $\Omega \setminus S$.

The extension of \Cref{thm:apriori} and \Cref{thm:aposteriori} leads to error estimates for $\|\sigma - \sigma_{h}\|^2_{L^{4/3}(\Omega)} + \|u - u_\Tcal\|^2_{L^2(\Omega)}$ that predict optimal convergence rates. Those are confirmed in \Cref{fig:2WellConvUniform} for $\|\sigma - \sigma_{h}\|^2_{L^{4/3}(\Omega)}$, $\|\D u - R u_h\|_{L^4(\Omega)}^2$, $\|u - u_\Tcal\|^2_{L^2(\Omega)}$, and $|E(u) - E_h(u_h)|$, and polynomial degrees $k = 0,\dots,3$ with rates $k+1$ and a very accurate discrete solution for $k = 4$. The 
modified a~posteriori estimate 
 $\widehat{\mathrm{RHS}} \coloneqq \mathrm{RHS} + \|h_\Tcal(1 - \Pi_{\Tcal}^k) g\|_{L^2(\Omega)}$
 and the guaranteed lower energy bound $\min E(V) - \mathrm{LEB}$ converge with the rates $k/2 + 1$ in \Cref{fig:2WellConvUniform}.b. 
 Undisplayed numerical experiments with the adaptive algorithm indicate no improvements of the convergence with optimal rates on the structured meshes of \Cref{fig:2WellConvUniform}.

\begin{figure}[h!]
	\begin{minipage}[t]{0.485\textwidth}
		\centering
		\includegraphics[scale=0.84]{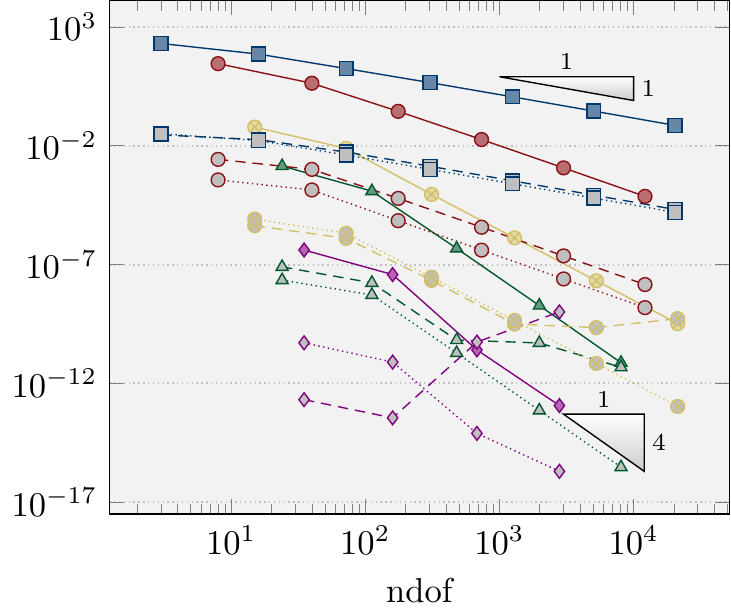}
	\end{minipage}\hfill
	\begin{minipage}[t]{0.485\textwidth}
		\centering
		\includegraphics[scale=0.84]{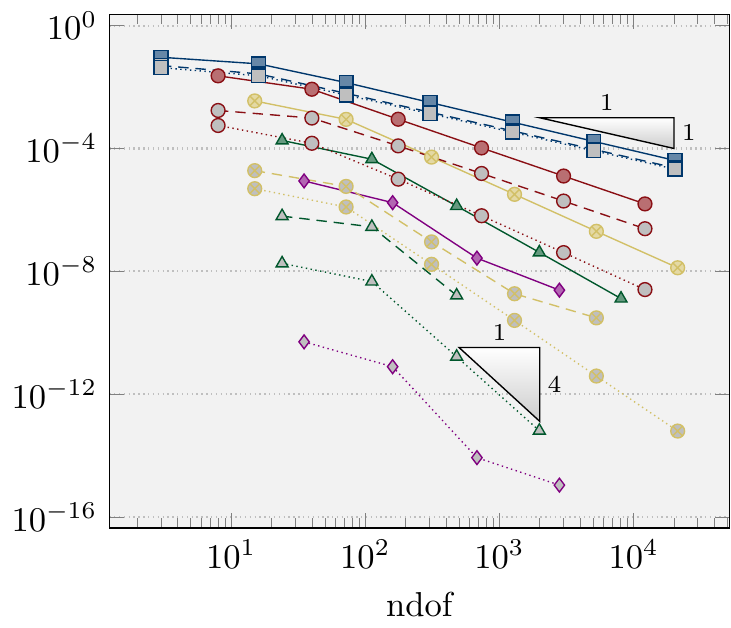}
	\end{minipage}
	\caption{Convergence history plot of $\|\sigma - \sigma_{h}\|^2_{L^{4/3}(\Omega)}$ (solid line), $\|\D u - R u_h\|^2_{L^{4}(\Omega)}$ (dashed line), $\|u - u_\Tcal\|^2_{L^2(\Omega)}$ (dotted line) (left) and of $\widehat{\mathrm{RHS}}$ (solid line), $\min E(V) - \mathrm{LEB}$ (dashed line), $|E(u) - E_h(u_h)|$ (dotted line) (right) for 2-well problem in \Cref{sec:ex2well} with $k$ from \Cref{fig:legend}}
	\label{fig:2WellConvUniform}
\end{figure}

\subsection{Conclusions}
The computer experiments provide empirical evidence for the improved convergence rates of the unstabilized HHO methods for examples of 
degenerate convex minimization. The numerical results confirm the theoretical findings,  
in particular,  the suggested guaranteed lower energy bounds are confirmed bounds and converge superlinearly to the exact energy $\min E(V)$ in all examples. 
The a posteriori estimate in Subsection 4.2 motivates an adaptive mesh-refining algorithm for the HHO schemes that converges in the examples. 
A higher polynomial degree $k$ leads to  improved convergence rates of the stress error. 
Optimal convergence rates are observed for the (piecewise) smooth solutions in 
\Cref{sec:expLaplaceSquare}--\ref{sec:ex2well}.

\bibliographystyle{siamplain}
\bibliography{references}
\end{document}